\documentclass[11pt,reqno]{amsart}

\usepackage[T1]{fontenc}
\usepackage[utf8]{inputenc}
\usepackage{lmodern}
\usepackage{microtype}
\usepackage[a4paper,margin=1in]{geometry}

\usepackage{amsmath,amssymb,amsfonts,amsthm}
\usepackage{mathtools}
\usepackage{bm}
\usepackage{mathrsfs}

\usepackage{enumitem}
\usepackage{xcolor}
\usepackage{aliascnt}
\usepackage[
    colorlinks=true,
    linkcolor=blue,
    citecolor=blue,
    urlcolor=blue
]{hyperref}
\usepackage[
    nameinlink,
    capitalise,
    noabbrev
]{cleveref}

\allowdisplaybreaks
\numberwithin{equation}{section}
\setlist[itemize]{topsep=3pt,itemsep=2pt,parsep=0pt}
\setlist[enumerate]{topsep=3pt,itemsep=2pt,parsep=0pt}

\theoremstyle{plain}
\newtheorem{theorem}{Theorem}[section]

\newaliascnt{lemma}{theorem}
\newtheorem{lemma}[lemma]{Lemma}
\aliascntresetthe{lemma}

\newaliascnt{proposition}{theorem}
\newtheorem{proposition}[proposition]{Proposition}
\aliascntresetthe{proposition}

\newaliascnt{corollary}{theorem}
\newtheorem{corollary}[corollary]{Corollary}
\aliascntresetthe{corollary}

\theoremstyle{definition}
\newaliascnt{definition}{theorem}
\newtheorem{definition}[definition]{Definition}
\aliascntresetthe{definition}

\newaliascnt{assumption}{theorem}
\newtheorem{assumption}[assumption]{Assumption}
\aliascntresetthe{assumption}

\newaliascnt{example}{theorem}
\newtheorem{example}[example]{Example}
\aliascntresetthe{example}

\theoremstyle{remark}
\newaliascnt{remark}{theorem}
\newtheorem{remark}[remark]{Remark}
\aliascntresetthe{remark}

\crefname{theorem}{theorem}{theorems}
\Crefname{theorem}{Theorem}{Theorems}
\crefname{lemma}{lemma}{lemmas}
\Crefname{lemma}{Lemma}{Lemmas}
\crefname{proposition}{proposition}{propositions}
\Crefname{proposition}{Proposition}{Propositions}
\crefname{corollary}{corollary}{corollaries}
\Crefname{corollary}{Corollary}{Corollaries}
\crefname{definition}{definition}{definitions}
\Crefname{definition}{Definition}{Definitions}
\crefname{assumption}{assumption}{assumptions}
\Crefname{assumption}{Assumption}{Assumptions}
\crefname{remark}{remark}{remarks}
\Crefname{remark}{Remark}{Remarks}
\crefname{example}{example}{examples}
\Crefname{example}{Example}{Examples}
\crefname{section}{section}{sections}
\Crefname{section}{Section}{Sections}

\newcommand{\RR}{\mathbb{R}}

\newcommand{\NN}{\mathbb{N}}

\newcommand{\Om}{\Omega}
\newcommand{\Hilb}{H}
\newcommand{\EnergySpace}{V}
\newcommand{\StrongSpace}{E}
\newcommand{\Sphere}{\mathcal{M}}

\newcommand{\Bop}{\mathcal{B}}
\newcommand{\Aop}{\mathcal{A}}
\newcommand{\Ppoly}{\mathscr{P}}
\newcommand{\Proj}[1]{\Pi_{#1}}
\newcommand{\EqSetLin}{\mathcal{E}_{\mathrm{eq}}^{\mathrm{lin}}}
\newcommand{\Semigroup}[1]{\mathrm{e}^{-#1\Aop}}
\newcommand{\Tan}[1]{T_{#1}\Sphere}

\DeclareMathOperator{\Dom}{\mathcal{D}}

\DeclareMathOperator{\Ker}{Ker}
\DeclareMathOperator{\Span}{span}

\DeclareMathOperator{\spec}{\sigma}

\DeclarePairedDelimiter{\norm}{\lVert}{\rVert}

\DeclarePairedDelimiterX{\inner}[2]{\langle}{\rangle}{#1,#2}

\newcommand{\dd}{\,\mathrm{d}}
\newcommand{\pt}{\partial_t}
\newcommand{\degree}{m}
\newcommand{\shift}{\kappa}
\newcommand{\rhoSH}{\rho}

\title[Polynomial Spectral Selection]
{Spectral Selection in Sphere-Constrained Flows Generated by Polynomials of the Dirichlet Laplacian}

\author{Javed Hussain}
\address{Department of Mathematics, Sukkur IBA University,
Airport Road, Sukkur 65200, Pakistan}
\email{javed.brohi@iba-suk.edu.pk}

\begin{document}

\begin{abstract}
Let \(\Om\subset\RR^d\) be a bounded smooth domain and let
\(\Bop=-\Delta_D\) be the positive Dirichlet Laplacian on
\(\Hilb=L^2(\Om)\).  For a real polynomial \(\Ppoly\) with positive
leading coefficient, we study the constrained linear equation
\[
    u_t=-\Proj{u}\Ppoly(\Bop)u,
    \qquad \norm{u(0)}_{\Hilb}=1.
\]
Its solution is the normalized semigroup orbit
\[
    u(t)=
    \frac{e^{-t\Ppoly(\Bop)}u_0}
         {\norm{e^{-t\Ppoly(\Bop)}u_0}_{\Hilb}}.
\]
The active spectral support is preserved, and the trajectory converges to the
normalized projection of \(u_0\) onto the active eigenspaces for which
\(\Ppoly(\lambda_j)\) is minimal.  The next active polynomial spectral value
gives the exponential rate.  For every \(\theta\geq0\) and \(\tau>0\), the
same rate holds in \(\Dom(\Aop^\theta)\) for \(t\geq\tau\), even when the
initial datum has no fractional regularity.  We also show that a finite set of
Dirichlet levels can be prescribed as the global minimizing set of a
polynomial, and that an isolated selected set is stable under sufficiently
small polynomial perturbations.  For \(\Ppoly(s)=s^m\) the lowest active
Dirichlet level is selected.  For \(\Ppoly(s)=(s-\rho)^2\), selection is by
distance from \(\rho\), and cross-level degeneracy occurs only at Dirichlet
midpoints.
\end{abstract}

\subjclass[2020]{Primary 35B40; Secondary 35P05, 47D06, 58D25, 37L05}

\keywords{sphere-constrained flow, polynomial of the Dirichlet Laplacian,
normalized semigroup, Rayleigh-quotient flow, spectral selection,
active spectral support, spectral gap, polynomial perturbation,
polyharmonic flow, Swift--Hohenberg operator}

\maketitle

\section{Introduction}
\label{sec:introduction}

For the normalized heat flow on the \(L^2\)-sphere, the Dirichlet spectral
ordering determines the asymptotic state.  Replacing the Dirichlet Laplacian
by a polynomial of that operator preserves the eigenspaces but changes their
dynamical ordering from \(\lambda_j\) to \(\Ppoly(\lambda_j)\).  The
polynomial therefore affects the constrained dynamics through its values on
the Dirichlet spectrum, not merely through its degree.  When \(\Ppoly\) is
non-monotone, an interior Dirichlet level may be asymptotically preferred to
the lowest active level, and several distinct spectral levels may be assigned
the same minimal polynomial value.

Let \(\Om\subset\RR^d\) be a bounded smooth domain, set
\[
    \Hilb=L^2(\Om),
    \qquad
    \Bop=-\Delta_D,
\]
and write
\[
    \Bop e_j=\lambda_je_j,
    \qquad
    0<\lambda_1\leq\lambda_2\leq\cdots,
    \qquad
    \lambda_j\to\infty,
\]
where the eigenvalues are repeated according to multiplicity and
\(\{e_j\}_{j\geq1}\) is an orthonormal basis of \(\Hilb\).  Let
\[
    \Ppoly(s)
    =a_{\degree}s^{\degree}
     +a_{\degree-1}s^{\degree-1}
     +\cdots+a_1s+a_0,
    \qquad
    a_{\degree}>0.
\]
The spectral realization satisfies
\[
    \Ppoly(\Bop)e_j=\Ppoly(\lambda_j)e_j.
\]
On the unit sphere
\[
    \Sphere:=\{u\in\Hilb:\norm{u}_{\Hilb}=1\},
\]
the tangent projection is
\[
    \Proj{u}v
    =v-\inner{v}{u}_{\Hilb}u,
    \qquad u\in\Sphere,
\]
and we study
\begin{equation}
\label{eq:intro-main-flow}
    u_t=-\Proj{u}\Ppoly(\Bop)u,
    \qquad
    u(0)=u_0\in\Sphere.
\end{equation}

Mahony and Absil \cite{MahonyAbsil2003} studied the continuous-time
Rayleigh quotient flow on a finite-dimensional sphere.  Normalized gradient
flows also appear in eigenvalue and constrained minimization procedures; see
Bao and Du \cite{BaoDu2004}, Liu and Luo \cite{LiuLuo2010}, and
Besse, Duboscq and Le Coz \cite{BesseDuboscqLeCoz2022}.  Infinite-dimensional
constrained heat flows are considered in
\cite{Rybka2006,CaffarelliLin2009,Hussain2020IJMCS,BrzezniakHussain2024}.
For the operator studied here, the Dirichlet eigenspaces are fixed but their
ordering is changed by
\[
    \lambda_j\longmapsto \Ppoly(\lambda_j).
\]
The results below describe the selected active eigenspace, its rate of
attraction, and the effect of changing the polynomial.

A constant shift is useful because \(\Ppoly(\Bop)\) need not be positive.
Choose \(\shift>0\) so that
\[
    \Aop:=\Ppoly(\Bop)+\shift I\geq I.
\]
Since
\[
    \Proj{u}(\shift u)=0,
    \qquad u\in\Sphere,
\]
the shift leaves \eqref{eq:intro-main-flow} unchanged.  It supplies the
positive Hilbert scale
\[
    \Dom(\Aop^\theta)=\Dom(\Bop^{\degree\theta}),
    \qquad \theta\geq0,
\]
with equivalent norms.  On smooth domains,
\[
    \Dom(\Bop^\degree)
    =
    \left\{
        u\in H^{2\degree}(\Om):
        \Delta^ku|_{\partial\Om}=0,
        \ k=0,\ldots,\degree-1
    \right\},
\]
so the spectral realization corresponds to Navier-type boundary conditions.
The smooth boundary is needed here only for this classical domain
identification; the spectral arguments use positivity, self-adjointness and
compact resolvent.

For \(u_0=\sum_{j\geq1}c_je_j\), the solution of
\eqref{eq:intro-main-flow} is
\begin{equation}
\label{eq:intro-exact-formula}
    u(t)
    =
    \frac{e^{-t\Ppoly(\Bop)}u_0}
         {\norm{e^{-t\Ppoly(\Bop)}u_0}_{\Hilb}}
    =
    \frac{
      \displaystyle\sum_{j=1}^\infty
      c_je^{-t\Ppoly(\lambda_j)}e_j
    }{
      \displaystyle
      \left(
        \sum_{j=1}^\infty
        |c_j|^2e^{-2t\Ppoly(\lambda_j)}
      \right)^{1/2}
    }.
\end{equation}
Thus no Dirichlet component can be created or removed at finite time.  If
\[
    \Sigma_P(u_0)
    :=\{\Ppoly(\lambda_j):c_j\neq0\},
    \qquad
    \mu_*(u_0):=\min\Sigma_P(u_0),
\]
and \(\mathsf E_*\) denotes the orthogonal projection onto
\[
    \Ker\bigl(\Ppoly(\Bop)-\mu_*(u_0)I\bigr),
\]
then the main selection theorem gives
\begin{equation}
\label{eq:intro-selection-limit}
    u(t)
    \longrightarrow
    \frac{\mathsf E_*u_0}
         {\norm{\mathsf E_*u_0}_{\Hilb}}
    \qquad\text{in }\Hilb.
\end{equation}
If another active polynomial value is present, the convergence is
exponential with exponent
\[
    \delta_*(u_0)
    =
    \min\{\mu-\mu_*(u_0):
          \mu\in\Sigma_P(u_0),\ \mu>\mu_*(u_0)\}.
\]
The same exponent governs convergence in every fractional domain for positive
times, without any fractional regularity assumption on \(u_0\).

Two further consequences use the polynomial structure in a way that is absent
from the ordinary heat flow.  First, any prescribed finite set of distinct
Dirichlet levels \(J=\{\nu_1,\ldots,\nu_r\}\) can be made the global
minimizing set by taking
\[
    \Ppoly_J(s)=\prod_{k=1}^r(s-\nu_k)^2.
\]
Second, if a set of active levels is isolated by a positive polynomial gap,
then it remains separated from all other active levels under sufficiently
small polynomial perturbations.  A perturbation can split a degenerate
selected set, and the splitting is determined by the perturbing polynomial
restricted to that finite set.

The polyharmonic and Swift--Hohenberg polynomials illustrate two different
orderings.  For \(\Ppoly(s)=s^\degree\), the polynomial is strictly
increasing on the positive Dirichlet spectrum, so the lowest active level is
selected.  For
\[
    \Ppoly_\rho(s)=(s-\rho)^2,
\]
the selected active levels are those closest to \(\rho\).  This reflects the
preferred-scale structure associated with the Swift--Hohenberg operator
\cite{SwiftHohenberg1977,CrossHohenberg1993,PeletierRottschafer2004,
PeletierWilliams2007}.  Distinct levels can be selected simultaneously only
when \(\rho\) is their midpoint; consequently cross-level degeneracy occurs
on a countable exceptional set of parameter values.

The selection proof starts from \eqref{eq:intro-exact-formula}: after the
smallest active exponential is factored out, the orthogonal remainder decays
at the active spectral gap.  Positive-time smoothing gives the corresponding
fractional-domain estimates.

\Cref{sec:operator-setting} fixes the operator realization and its Hilbert
scale.  The constrained flow and the selection theorem are proved in
\Cref{sec:linear-selection}.  Spectral design and perturbations are treated in
\Cref{sec:design-robustness}, followed by the polyharmonic and
Swift--Hohenberg cases in \Cref{sec:examples}.

\section{Polynomial Dirichlet operators and the functional setting}
\label{sec:operator-setting}

We use the spectral realization of \(\Ppoly(\Bop)\).  Its domain is fixed by
the Dirichlet spectral calculus, and a constant shift gives a positive
operator without changing the projected flow.  The resulting fractional
powers provide the Hilbert scale used below.

\subsection{The Dirichlet Laplacian and its polynomial functional calculus}
\label{subsec:polynomial-functional-calculus}

We begin with the spectral realization of the positive Dirichlet Laplacian and
fix the structural assumptions on the polynomial.

\begin{assumption}
\label{ass:domain-polynomial}
Let \(\Om\subset\RR^d\), \(d\geq 1\), be a bounded connected domain with
\(C^\infty\)-boundary.  We work in the real Hilbert space
\[
    \Hilb:=L^2(\Om),
    \qquad
    \inner{u}{v}_{\Hilb}
    :=
    \int_{\Om}u(x)v(x)\,\dd x.
\]
Let
\[
    \Bop=-\Delta_D,
    \qquad
    \Dom(\Bop)=H^2(\Om)\cap H_0^1(\Om),
\]
be the positive Dirichlet Laplacian.  Moreover, let
\begin{equation}
    \Ppoly(s)
    =
    a_{\degree}s^{\degree}
    +a_{\degree-1}s^{\degree-1}
    +\cdots+a_1s+a_0,
    \qquad
    \degree\in\NN,
    \label{eq:polynomial-P}
\end{equation}
be a real polynomial with
\[
    a_{\degree}>0.
\]
\end{assumption}

The \(C^\infty\)-assumption will be used when the spectral domains are
identified with classical Navier domains.  Until that point, only positivity,
self-adjointness and compact resolvent of \(\Bop\) are needed.

\begin{lemma}
\label{lem:dirichlet-spectral}
Under \Cref{ass:domain-polynomial}, the operator \(\Bop\) is positive,
self-adjoint, and has compact inverse on \(\Hilb\).  Moreover,
\begin{equation}
    \Dom(\Bop^{1/2})=H_0^1(\Om),
    \qquad
    \norm{\Bop^{1/2}u}_{\Hilb}
    =
    \norm{\nabla u}_{L^2(\Om)}.
    \label{eq:form-domain-B}
\end{equation}
Consequently, there exist eigenvalues
\[
    0<\lambda_1\leq\lambda_2\leq\cdots,
    \qquad
    \lambda_j\longrightarrow\infty,
\]
repeated according to multiplicity, and an orthonormal basis
\(\{e_j\}_{j\geq1}\) of \(\Hilb\) such that
\begin{equation}
    \Bop e_j=\lambda_j e_j,
    \qquad j\geq1.
    \label{eq:B-eigenbasis}
\end{equation}
\end{lemma}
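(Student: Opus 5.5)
The plan is to build \(\Bop\) from the Dirichlet form and then use the spectral theorem for self-adjoint operators with compact resolvent.

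First, consider on \(\Hilb\) the bilinear form
\[
    a(u,v):=\int_\Om\nabla u\cdot\nabla v\,\dd x,
    \qquad
    D(a)=H_0^1(\Om).
\]
It is symmetric, densely defined (since \(C_c^\infty(\Om)\subset H_0^1(\Om)\) is dense in \(L^2\)), and closed, because \(H_0^1\) is complete in the graph norm \((a(u,u)+\norm{u}^2)^{1/2}\). The Poincaré inequality \(\norm{u}_{L^2}^2\leq C_\Om\norm{\nabla u}_{L^2}^2\) on bounded \(\Om\) gives coercivity \(a(u,u)\geq C_\Om^{-1}\norm{u}_\Hilb^2\).

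Kato's representation theorem then yields a unique positive self-adjoint operator \(\Bop_a\) such that \(\inner{\Bop_au}{v}_{\Hilb}=a(u,v)\) for \(u\in\Dom(\Bop_a)\) and \(v\in H_0^1(\Om)\). The second representation theorem gives \(\Dom(\Bop_a^{1/2})=D(a)=H_0^1(\Om)\) with \(\norm{\Bop_a^{1/2}u}^2=a(u,u)=\norm{\nabla u}^2\), which is \eqref{eq:form-domain-B}. Coercivity gives \(\Bop_a\geq C_\Om^{-1}I\), so \(\Bop_a\) is strictly positive and boundedly invertible.

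The main nontrivial step is to identify \(\Bop_a\) with \(-\Delta\) on \(H^2(\Om)\cap H_0^1(\Om)\).

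For the inclusion \(H^2\cap H_0^1\subset\Dom(\Bop_a)\), integrate by parts using Green's formula; the boundary term vanishes because \(v\in H_0^1\). This shows that such \(u\) lie in \(\Dom(\Bop_a)\) with \(\Bop_au=-\Delta u\).

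For the reverse inclusion, take \(u\in\Dom(\Bop_a)\) and set \(f=\Bop_au\in L^2\). Then \(u\) is the weak \(H_0^1\) solution of \(-\Delta u=f\). Elliptic \(H^2\)-regularity up to the boundary on \(C^\infty\) (indeed \(C^{1,1}\)) domains, e.g.\ Gilbarg--Trudinger or Evans, gives \(u\in H^2(\Om)\). This is the one place where boundary smoothness is used, and it is cited rather than reproved. Hence \(\Bop=\Bop_a\).

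Compactness of \(\Bop^{-1}\) follows by factoring it. The map \(\Bop^{-1}:\Hilb\to H_0^1(\Om)\) is bounded by Lax--Milgram and coercivity. The embedding \(H_0^1(\Om)\hookrightarrow L^2(\Om)\) is compact by Rellich--Kondrachov on the bounded domain \(\Om\), and the composition of a bounded map with a compact one is compact.

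Finally, apply the spectral theorem for compact self-adjoint operators to \(\Bop^{-1}\). Its kernel is trivial, and it is positive with eigenvalues \(\mu_j\downarrow0\) of finite multiplicity. The eigenvectors of \(\Bop^{-1}\), chosen real since \(\Hilb\) is real, form an orthonormal basis \(\{e_j\}\) of \(\Hilb\). Setting \(\lambda_j=\mu_j^{-1}\) gives \eqref{eq:B-eigenbasis} with \(0<\lambda_1\leq\lambda_2\leq\cdots\to\infty\), repeated according to multiplicity. Positivity of \(\lambda_1\) comes from coercivity, and \(\lambda_j\to\infty\) because \(\dim\Hilb=\infty\) and each eigenvalue has finite multiplicity.
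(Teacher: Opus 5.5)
Your proof is correct and follows the same route as the paper. Both define the Dirichlet form on \(H_0^1(\Om)\) and apply the representation theorem for closed coercive symmetric forms to get the form domain identity. Both then use elliptic regularity to identify the operator with \(-\Delta\) on \(H^2(\Om)\cap H_0^1(\Om)\), prove that \(\Bop^{-1}\) is compact, and conclude with the spectral theorem.

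The only difference is in the compactness step. You factor \(\Bop^{-1}\) through \(H_0^1(\Om)\) and use Rellich's theorem, whereas the paper factors it through the compact embedding \(H^2(\Om)\hookrightarrow L^2(\Om)\). Your version has the minor advantage of not relying on boundary regularity for that step.
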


\begin{proof}
Consider the symmetric bilinear form
\[
    \mathfrak b(u,v)
    :=
    \int_{\Om}\nabla u(x)\cdot\nabla v(x)\,\dd x,
    \qquad
    u,v\in H_0^1(\Om).
\]
The space \(H_0^1(\Om)\) is dense in \(L^2(\Om)\), and Poincar\'e's
inequality gives a constant \(c_{\Om}>0\) such that
\[
    \mathfrak b(u,u)
    =
    \norm{\nabla u}_{L^2(\Om)}^2
    \geq
    c_{\Om}\norm{u}_{L^2(\Om)}^2,
    \qquad
    u\in H_0^1(\Om).
\]
Thus \(\mathfrak b\) is densely defined, closed, symmetric, and coercive.
The representation theorem for closed symmetric forms therefore determines a
positive self-adjoint operator \(\widetilde{\Bop}\) in \(\Hilb\) satisfying
\[
    \Dom(\widetilde{\Bop}^{1/2})=H_0^1(\Om)
\]
and
\[
    \inner{\widetilde{\Bop}^{1/2}u}
           {\widetilde{\Bop}^{1/2}v}_{\Hilb}
    =
    \mathfrak b(u,v),
    \qquad
    u,v\in H_0^1(\Om).
\]
On a smooth bounded domain, elliptic regularity for the homogeneous Dirichlet
problem identifies
\[
    \Dom(\widetilde{\Bop})
    =
    H^2(\Om)\cap H_0^1(\Om),
    \qquad
    \widetilde{\Bop}u=-\Delta u.
\]
Hence \(\widetilde{\Bop}=\Bop\), and \eqref{eq:form-domain-B} follows.

Coercivity implies that \(0\) belongs to the resolvent set of \(\Bop\).
Furthermore,
\[
    \Bop^{-1}:\Hilb\longrightarrow H^2(\Om)\cap H_0^1(\Om)
\]
is bounded.  Since the embedding \(H^2(\Om)\hookrightarrow L^2(\Om)\) is
compact on a bounded smooth domain, \(\Bop^{-1}\) is compact as an operator
from \(\Hilb\) into itself.  The spectral theorem for positive self-adjoint
operators with compact inverse then gives an orthonormal basis of eigenvectors
and a sequence of positive eigenvalues tending to infinity, which proves
\eqref{eq:B-eigenbasis}.
\end{proof}

For
\[
    u=\sum_{j=1}^{\infty}u_j e_j,
    \qquad
    u_j:=\inner{u}{e_j}_{\Hilb},
\]
we define the polynomial of the Dirichlet Laplacian by
\begin{equation}
    \Ppoly(\Bop)u
    :=
    \sum_{j=1}^{\infty}
    \Ppoly(\lambda_j)u_j e_j
    \label{eq:P-B-definition}
\end{equation}
on the spectral domain
\begin{equation}
    \Dom(\Ppoly(\Bop))
    :=
    \left\{
        u\in\Hilb:
        \sum_{j=1}^{\infty}
        |\Ppoly(\lambda_j)|^2|u_j|^2<\infty
    \right\}.
    \label{eq:P-B-domain}
\end{equation}
The highest-order term of the polynomial determines this domain.

\begin{proposition}
\label{prop:domain-PB}
Under \Cref{ass:domain-polynomial},
\begin{equation}
    \Dom(\Ppoly(\Bop))=\Dom(\Bop^{\degree}),
    \label{eq:domain-PB}
\end{equation}
with equivalent graph norms.  Moreover, \(\Ppoly(\Bop)\) is self-adjoint on
\(\Hilb\).
\end{proposition}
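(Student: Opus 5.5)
The plan is to work entirely in the eigenbasis from \Cref{lem:dirichlet-spectral} and reduce both claims to elementary comparisons of the coefficient weights \(|\Ppoly(\lambda_j)|^2\) and \(\lambda_j^{2\degree}\). Recall that \(\Dom(\Bop^{\degree})=\{u:\sum_j\lambda_j^{2\degree}|u_j|^2<\infty\}\), with graph norm squared \(\sum_j(1+\lambda_j^{2\degree})|u_j|^2\). The graph norm of \(\Ppoly(\Bop)\) is likewise \(\sum_j(1+|\Ppoly(\lambda_j)|^2)|u_j|^2\). So the domain equality with equivalent norms follows from two-sided constants:
\[
    c\,(1+s^{2\degree})\leq 1+|\Ppoly(s)|^2\leq C\,(1+s^{2\degree}),
    \qquad s\geq\lambda_1>0.
\]

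The upper bound is immediate. For \(s\geq0\) and \(k\leq\degree\) we have \(s^k\leq 1+s^{\degree}\), hence \(|\Ppoly(s)|\leq(\sum_k|a_k|)(1+s^{\degree})\). Squaring and using \((1+s^\degree)^2\leq2(1+s^{2\degree})\) gives the constant \(C\).

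The lower bound is the only step needing care, because \(\Ppoly\) may vanish at finitely many points of \([0,\infty)\). This is handled by splitting into large and small \(s\).
\begin{itemize}
\item For large \(s\): since \(a_\degree>0\) and the lower-order terms are dominated, there is \(R\geq1\) with \(|\Ppoly(s)|\geq\tfrac{a_\degree}{2}s^{\degree}\) for \(s\geq R\). There \(1+|\Ppoly(s)|^2\geq \min(1,a_\degree^2/4)\,s^{2\degree}\geq\tfrac12\min(1,a_\degree^2/4)(1+s^{2\degree})\).
\item For \(s\in[0,R]\): here \(1+|\Ppoly(s)|^2\geq1\geq(1+R^{2\degree})^{-1}(1+s^{2\degree})\).
\end{itemize}
Taking the minimum of the two constants gives \(c\). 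The additive \(1\) in the graph norm is exactly what absorbs possible zeros of \(\Ppoly\) on the spectrum, so no assumption on \(\Ppoly(\lambda_j)\neq0\) is required.

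Self-adjointness is the standard fact for real multiplication operators in an orthonormal basis. The domain contains all finite linear combinations of the \(e_j\), so \(\Ppoly(\Bop)\) is densely defined. Symmetry follows from Parseval, since the \(\Ppoly(\lambda_j)\) are real. For the adjoint, take \(v\in\Dom(\Ppoly(\Bop)^*)\) with \(w=\Ppoly(\Bop)^*v\). Testing against \(e_j\) gives \(w_j=\Ppoly(\lambda_j)v_j\). Then \(\sum_j|\Ppoly(\lambda_j)|^2|v_j|^2=\|w\|^2<\infty\), so \(v\in\Dom(\Ppoly(\Bop))\) and \(\Ppoly(\Bop)^*\subset\Ppoly(\Bop)\). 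Alternatively, one can simply cite the spectral theorem: \(\Ppoly(\Bop)\) is the Borel functional calculus of the self-adjoint \(\Bop\) with a real function, and this matches \eqref{eq:P-B-definition}–\eqref{eq:P-B-domain}.
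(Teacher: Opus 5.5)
Your proposal is correct and follows essentially the same route as the paper. Both prove the two-sided comparison \(c(1+s^{2\degree})\le 1+|\Ppoly(s)|^2\le C(1+s^{2\degree})\) by splitting into large \(s\), where \(\Ppoly(s)\sim a_\degree s^\degree\), and a bounded range, and both then use the coefficientwise adjoint argument of testing against \(e_j\). The differences are minor: you obtain explicit constants from \(s^k\le 1+s^\degree\) and the bound \(1+|\Ppoly|^2\ge 1\), where the paper uses continuity and strict positivity on \([\lambda_1,R]\), and you state dense definedness explicitly, which the paper leaves implicit.
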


\begin{proof}
Since \(a_{\degree}>0\),
\[
    \lim_{s\to\infty}
    \frac{\Ppoly(s)}{s^{\degree}}
    =
    a_{\degree}.
\]
Hence there exists \(R>\lambda_1\) such that
\[
    \frac{a_{\degree}}{2}s^{\degree}
    \leq
    |\Ppoly(s)|
    \leq
    2a_{\degree}s^{\degree},
    \qquad
    s\geq R.
\]
On the compact interval \([\lambda_1,R]\), both
\(1+|\Ppoly(s)|^2\) and \(1+s^{2\degree}\) are continuous and strictly
positive.  Consequently, there exist constants \(c_P,C_P>0\) such that
\begin{equation}
    c_P\bigl(1+s^{2\degree}\bigr)
    \leq
    1+|\Ppoly(s)|^2
    \leq
    C_P\bigl(1+s^{2\degree}\bigr),
    \qquad
    s\geq\lambda_1.
    \label{eq:P-growth-equivalence}
\end{equation}
If \(u=\sum_{j\geq1}u_je_j\in\Hilb\), then
\[
    u\in\Dom(\Ppoly(\Bop))
\]
if and only if
\[
    \sum_{j=1}^{\infty}
    |\Ppoly(\lambda_j)|^2|u_j|^2<\infty.
\]
Since \(u\in\Hilb\) already implies
\(\sum_{j\geq1}|u_j|^2<\infty\), the two-sided estimate
\eqref{eq:P-growth-equivalence} shows that the preceding condition is
equivalent to
\[
    \sum_{j=1}^{\infty}
    \lambda_j^{2\degree}|u_j|^2<\infty.
\]
The latter condition is precisely \(u\in\Dom(\Bop^{\degree})\).
This proves \eqref{eq:domain-PB} and the equivalence of the corresponding
graph norms.

It remains to check self-adjointness.  The operator is symmetric because the
multipliers \(\Ppoly(\lambda_j)\) are real.  Let
\(v\in\Dom(\Ppoly(\Bop)^*)\).  Then there is \(g\in\Hilb\) such that
\[
    \inner{\Ppoly(\Bop)u}{v}_{\Hilb}
    =\inner{u}{g}_{\Hilb}
    \qquad
    \text{for every }u\in\Dom(\Ppoly(\Bop)).
\]
Taking \(u=e_j\) gives
\[
    \Ppoly(\lambda_j)\inner{e_j}{v}_{\Hilb}
    =\inner{e_j}{g}_{\Hilb}.
\]
Hence
\[
    \sum_{j=1}^\infty
    |\Ppoly(\lambda_j)|^2
    |\inner{v}{e_j}_{\Hilb}|^2
    =\norm{g}_{\Hilb}^2<\infty.
\]
Thus \(v\in\Dom(\Ppoly(\Bop))\) and
\(\Ppoly(\Bop)v=g\).  Therefore
\(\Dom(\Ppoly(\Bop)^*)\subset\Dom(\Ppoly(\Bop))\); the reverse inclusion
follows from symmetry.  Hence \(\Ppoly(\Bop)^*=\Ppoly(\Bop)\).
\end{proof}

The polynomial \(\Ppoly\) need not be positive on the spectrum of \(\Bop\).
For the evolution theory it is convenient to replace \(\Ppoly(\Bop)\) by a
strictly positive operator.  The sphere projection will later show that this
modification does not alter the constrained equation.

Since
\[
    \Ppoly(s)\longrightarrow+\infty
    \qquad\text{as }s\to\infty,
\]
the quantity
\begin{equation}
    \mu_P
    :=
    \min_{s\in[\lambda_1,\infty)}\Ppoly(s)
    \label{eq:mu-P}
\end{equation}
is finite.  Fix throughout the paper
\begin{equation}
    \shift>\max\{0,1-\mu_P\}
    \label{eq:kappa-choice}
\end{equation}
and define
\begin{equation}
    \Aop
    :=
    \Ppoly(\Bop)+\shift I.
    \label{eq:A-definition}
\end{equation}
If
\[
    Q(s):=\Ppoly(s)+\shift,
\]
then
\begin{equation}
    Q(s)\geq1,
    \qquad
    s\geq\lambda_1.
    \label{eq:Q-positive}
\end{equation}

\begin{lemma}
\label{lem:shift-comparison}
There exist constants \(c_A,C_A>0\) such that
\begin{equation}
    c_A(1+s^{\degree})
    \leq
    Q(s)
    \leq
    C_A(1+s^{\degree}),
    \qquad
    s\geq\lambda_1.
    \label{eq:Q-comparison}
\end{equation}
Consequently, \(\Aop\) is strictly positive and self-adjoint,
\begin{equation}
    \Dom(\Aop)=\Dom(\Bop^{\degree}),
    \label{eq:domain-A}
\end{equation}
and \(\Aop^{-1}\) is compact on \(\Hilb\).
\end{lemma}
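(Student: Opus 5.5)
The plan is to prove \eqref{eq:Q-comparison} by the same two-regime argument as in \Cref{prop:domain-PB}, and then read off the operator statements from the diagonal representation in the basis \(\{e_j\}\) of \Cref{lem:dirichlet-spectral}.

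\emph{Step 1: the comparison \eqref{eq:Q-comparison}.} Since \(a_\degree>0\), we have \(Q(s)/(1+s^\degree)\to a_\degree\) as \(s\to\infty\). Hence there is \(R>\lambda_1\) with
\[
    \tfrac{a_\degree}{2}(1+s^\degree)\leq Q(s)\leq 2a_\degree(1+s^\degree),
    \qquad s\geq R.
\]
On the compact interval \([\lambda_1,R]\), the quotient \(Q(s)/(1+s^\degree)\) is continuous. By \eqref{eq:Q-positive} it is strictly positive there, since \(Q\geq1\). It therefore attains a positive minimum and a finite maximum on this interval. Taking the smaller of the two lower constants and the larger of the two upper constants gives \(c_A,C_A\). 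This step uses the choice \eqref{eq:kappa-choice} only through \eqref{eq:Q-positive}; that choice is exactly what gives positivity on the compact piece, where \(\Ppoly\) itself may vanish or be negative.

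\emph{Step 2: structure of \(\Aop\).} Since \(\shift I\) is bounded and self-adjoint, \(\Aop\) is self-adjoint on \(\Dom(\Aop)=\Dom(\Ppoly(\Bop))\) by \Cref{prop:domain-PB}. The same proposition gives \(\Dom(\Aop)=\Dom(\Bop^\degree)\), which is \eqref{eq:domain-A}. Alternatively, one can note directly that \(\Aop e_j=Q(\lambda_j)e_j\) and apply \eqref{eq:Q-comparison} coefficientwise. Strict positivity follows from
\[
    \inner{\Aop u}{u}_\Hilb=\sum_j Q(\lambda_j)|u_j|^2\geq\norm{u}_\Hilb^2 .
\]

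\emph{Step 3: compactness of \(\Aop^{-1}\).} The inverse is the diagonal operator \(\Aop^{-1}u=\sum_j Q(\lambda_j)^{-1}u_je_j\). It is bounded with norm at most \(1\). By \eqref{eq:Q-comparison} and \(\lambda_j\to\infty\), the multipliers satisfy \(Q(\lambda_j)^{-1}\leq c_A^{-1}(1+\lambda_j^\degree)^{-1}\to0\). Hence \(\Aop^{-1}\) is the operator-norm limit of its finite-rank truncations \(\sum_{j\leq N}Q(\lambda_j)^{-1}u_je_j\), and is therefore compact. None of these steps is a genuine obstacle. The only point needing care is Step 1, where the lower bound must be uniform near \(\lambda_1\), and \eqref{eq:Q-positive} handles that.
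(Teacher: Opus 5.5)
Your proposal is correct and follows essentially the same route as the paper. You prove the comparison by the two-regime argument (asymptotics as \(s\to\infty\), then continuity and \(Q\geq1\) on \([\lambda_1,R]\)), obtain self-adjointness and \(\Dom(\Aop)=\Dom(\Bop^{\degree})\) from \Cref{prop:domain-PB} plus the bounded perturbation \(\shift I\), and get compactness of \(\Aop^{-1}\) as the operator-norm limit of its finite-rank diagonal truncations.
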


\begin{proof}
The positivity follows directly from \eqref{eq:kappa-choice} and
\eqref{eq:mu-P}.  Moreover,
\[
    \frac{Q(s)}{s^{\degree}}
    =
    \frac{\Ppoly(s)}{s^{\degree}}
    +
    \frac{\shift}{s^{\degree}}
    \longrightarrow
    a_{\degree}>0
    \qquad\text{as }s\to\infty.
\]
Hence \eqref{eq:Q-comparison} holds for all sufficiently large \(s\).
After modifying the constants \(c_A\) and \(C_A\), the same estimate holds
on the remaining compact interval because \(Q\) is continuous and bounded
below there by \(1\).

Since \(\Aop\) and \(\Ppoly(\Bop)\) differ by the bounded operator
\(\shift I\), their domains coincide.  Thus \eqref{eq:domain-A} follows from
\Cref{prop:domain-PB}.  Self-adjointness follows from the self-adjointness of
\(\Ppoly(\Bop)\) and the bounded self-adjoint perturbation \(\shift I\).

Finally, with
\begin{equation}
    q_j:=Q(\lambda_j),
    \qquad j\geq1,
    \label{eq:qj-definition}
\end{equation}
we have
\[
    \Aop e_j=q_j e_j,
    \qquad
    q_j\longrightarrow\infty
\]
by \eqref{eq:Q-comparison}.  Hence
\[
    \Aop^{-1}u
    =
    \sum_{j=1}^{\infty}q_j^{-1}u_j e_j.
\]
The finite-rank operators
\[
    K_Nu
    :=
    \sum_{j=1}^{N}q_j^{-1}u_j e_j
\]
satisfy
\[
    \norm{\Aop^{-1}-K_N}_{\mathcal L(\Hilb)}
    =
    \sup_{j>N}q_j^{-1}
    \longrightarrow0.
\]
Thus \(\Aop^{-1}\) is the operator-norm limit of finite-rank operators and
is therefore compact.
\end{proof}

We now record explicitly why the shift introduced in
\eqref{eq:A-definition} has no effect on the sphere-constrained dynamics.

\begin{lemma}[Shift invariance under the tangent projection]
\label{lem:shift-invariance}
Let
\[
    \Sphere
    :=
    \{u\in\Hilb:\norm{u}_{\Hilb}=1\}
\]
and, for \(u\in\Sphere\), define
\begin{equation}
    \Proj{u}v
    :=
    v-\inner{v}{u}_{\Hilb}u,
    \qquad
    v\in\Hilb.
    \label{eq:tangent-projection}
\end{equation}
Then, for every \(u\in\Dom(\Bop^{\degree})\cap\Sphere\),
\begin{equation}
    \Proj{u}\Aop u
    =
    \Proj{u}\Ppoly(\Bop)u.
    \label{eq:shift-invariance}
\end{equation}
\end{lemma}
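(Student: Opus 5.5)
The plan is to reduce the identity \eqref{eq:shift-invariance} to linearity of \(\Proj{u}\) together with the observation that \(\Proj{u}\) annihilates \(u\) itself.

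First I will check that both sides are defined. For \(u\in\Dom(\Bop^{\degree})\), \Cref{prop:domain-PB} gives \(u\in\Dom(\Ppoly(\Bop))\), and \eqref{eq:domain-A} in \Cref{lem:shift-comparison} gives \(u\in\Dom(\Aop)\). Hence \(\Ppoly(\Bop)u\) and \(\Aop u\) are well-defined elements of \(\Hilb\), and by \eqref{eq:A-definition}
\[
    \Aop u=\Ppoly(\Bop)u+\shift u .
\]

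Next I use that \(\Proj{u}\) is a linear map on \(\Hilb\) for fixed \(u\), which is clear from \eqref{eq:tangent-projection}. Applying it to the displayed identity gives
\[
    \Proj{u}\Aop u=\Proj{u}\Ppoly(\Bop)u+\shift\,\Proj{u}u .
\]
Since \(u\in\Sphere\), we have \(\inner{u}{u}_{\Hilb}=1\), and therefore
\[
    \Proj{u}u=u-\inner{u}{u}_{\Hilb}u=u-u=0 .
\]
The shift term drops out, which is \eqref{eq:shift-invariance}.

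There is no real obstacle. The only point requiring care is the domain bookkeeping in the first step, which ensures that \(\Aop u\) and \(\Ppoly(\Bop)u\) make sense as elements of \(\Hilb\). The earlier domain identifications supply exactly this.
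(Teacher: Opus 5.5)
Your proof is correct and matches the paper's argument: you use \(\Proj{u}u=0\), which holds because \(\norm{u}_{\Hilb}=1\), and then linearity of \(\Proj{u}\) removes the term \(\shift\,\Proj{u}u\). Your domain check via \Cref{prop:domain-PB} and \eqref{eq:domain-A} is only implicit in the paper, but it is a sensible addition.
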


\begin{proof}
Since \(\norm{u}_{\Hilb}=1\),
\[
    \Proj{u}u
    =
    u-\inner{u}{u}_{\Hilb}u
    =
    0.
\]
Using \(\Aop=\Ppoly(\Bop)+\shift I\), linearity of \(\Proj{u}\) therefore
gives
\[
    \Proj{u}\Aop u
    =
    \Proj{u}\Ppoly(\Bop)u
    +
    \shift\Proj{u}u
    =
    \Proj{u}\Ppoly(\Bop)u,
\]
which proves \eqref{eq:shift-invariance}.
\end{proof}

\subsection{Fractional domains and the natural Hilbert scale}
\label{subsec:fractional-domains}

The positive operator \(\Aop\) determines the energy and strong-solution
spaces.  Their relation to the fractional powers of the Dirichlet Laplacian
follows directly from the spectral comparison \eqref{eq:Q-comparison}.

For \(\theta\geq0\), define
\[
    \Aop^\theta u
    :=
    \sum_{j=1}^{\infty}
    q_j^\theta u_j e_j
\]
with
\begin{equation}
    \Dom(\Aop^\theta)
    :=
    \left\{
        u\in\Hilb:
        \sum_{j=1}^{\infty}
        q_j^{2\theta}|u_j|^2<\infty
    \right\}.
    \label{eq:A-fractional-domain}
\end{equation}

\begin{proposition}
\label{prop:fractional-domain-equivalence}
For every \(\theta\geq0\),
\begin{equation}
    \Dom(\Aop^\theta)
    =
    \Dom(\Bop^{\degree\theta}),
    \label{eq:fractional-domain-equivalence}
\end{equation}
and the norms
\[
    u\longmapsto\norm{\Aop^\theta u}_{\Hilb}
    \qquad\text{and}\qquad
    u\longmapsto
    \norm{u}_{\Hilb}
    +
    \norm{\Bop^{\degree\theta}u}_{\Hilb}
\]
are equivalent on this common domain.
\end{proposition}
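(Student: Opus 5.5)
The plan is to reduce everything to a two-sided comparison of the spectral multipliers, exactly as in the proof of \Cref{prop:domain-PB}. Fix \(\theta\geq0\). I will raise \eqref{eq:Q-comparison} to the power \(2\theta\). Since all quantities are positive for \(s\geq\lambda_1\), this gives
\[
    c_A^{2\theta}(1+s^{\degree})^{2\theta}
    \leq
    Q(s)^{2\theta}
    \leq
    C_A^{2\theta}(1+s^{\degree})^{2\theta},
    \qquad s\geq\lambda_1.
\]
Next I compare \((1+s^{\degree})^{2\theta}\) with \(1+s^{2\degree\theta}\) on \([\lambda_1,\infty)\). Elementary convexity/concavity bounds give
\[
    \max\{1,s^{2\degree\theta}\}
    \leq
    (1+s^{\degree})^{2\theta}
    \leq
    2^{2\theta}\max\{1,s^{2\degree\theta}\},
\]
and clearly \(\max\{1,x\}\leq 1+x\leq 2\max\{1,x\}\). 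Together these yield constants \(c_\theta,C_\theta>0\), depending only on \(\theta\), \(c_A\), \(C_A\), such that
\[
    c_\theta\bigl(1+\lambda_j^{2\degree\theta}\bigr)
    \leq
    q_j^{2\theta}
    \leq
    C_\theta\bigl(1+\lambda_j^{2\degree\theta}\bigr),
    \qquad j\geq1.
\]

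With this bound in hand, I take \(u=\sum_j u_je_j\in\Hilb\), multiply the inequality by \(|u_j|^2\) and sum over \(j\). This shows that \(\sum_j q_j^{2\theta}|u_j|^2<\infty\) holds if and only if \(\sum_j \lambda_j^{2\degree\theta}|u_j|^2<\infty\), where finiteness of \(\sum_j|u_j|^2\) is used. The first condition is the definition \eqref{eq:A-fractional-domain}. The second is the definition of \(\Dom(\Bop^{\degree\theta})\) via the spectral calculus of \(\Bop\), namely \(\Bop^{\degree\theta}u=\sum_j\lambda_j^{\degree\theta}u_je_j\). This proves \eqref{eq:fractional-domain-equivalence}. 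The same summed inequality reads
\[
    c_\theta\bigl(\norm{u}_{\Hilb}^2+\norm{\Bop^{\degree\theta}u}_{\Hilb}^2\bigr)
    \leq
    \norm{\Aop^\theta u}_{\Hilb}^2
    \leq
    C_\theta\bigl(\norm{u}_{\Hilb}^2+\norm{\Bop^{\degree\theta}u}_{\Hilb}^2\bigr).
\]
Using \((a+b)^2/2\leq a^2+b^2\leq(a+b)^2\) for \(a,b\geq0\), this gives the asserted equivalence with \(\norm{u}_{\Hilb}+\norm{\Bop^{\degree\theta}u}_{\Hilb}\).

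There is no genuine obstacle here. The only point needing care is the elementary comparison of \((1+s^{\degree})^{2\theta}\) with \(1+s^{2\degree\theta}\) uniformly in \(\theta\)-dependent constants, including the trivial case \(\theta=0\), where both sides are identically \(1\) and the norms are just \(\norm{u}_{\Hilb}\) versus \(2\norm{u}_{\Hilb}\).
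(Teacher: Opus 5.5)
Your proposal is correct and follows essentially the same route as the paper: raise \eqref{eq:Q-comparison} to the power $2\theta$, use the elementary equivalence of $(1+x)^{2\theta}$ with $1+x^{2\theta}$ to obtain $c_\theta(1+\lambda_j^{2\degree\theta})\leq q_j^{2\theta}\leq C_\theta(1+\lambda_j^{2\degree\theta})$, and then sum against $|u_j|^2$. Your explicit $\max$-bounds and your final passage from squared norms to $\norm{u}_{\Hilb}+\norm{\Bop^{\degree\theta}u}_{\Hilb}$ only spell out constants and a step that the paper leaves implicit.
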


\begin{proof}
By \eqref{eq:Q-comparison},
\[
    c_A(1+\lambda_j^\degree)
    \leq
    q_j
    \leq
    C_A(1+\lambda_j^\degree)
\]
for every \(j\geq1\).  Raising this estimate to the power \(2\theta\)
gives constants \(c_\theta,C_\theta>0\) such that
\begin{equation}
    c_\theta
    \bigl(1+\lambda_j^{2\degree\theta}\bigr)
    \leq
    q_j^{2\theta}
    \leq
    C_\theta
    \bigl(1+\lambda_j^{2\degree\theta}\bigr),
    \qquad j\geq1.
    \label{eq:qj-fractional-comparison}
\end{equation}
Indeed, if \(r=2\theta\geq0\), then for \(x\geq0\) the elementary
inequalities for powers of sums give constants \(c_r,C_r>0\) such that
\[
    c_r(1+x^r)\leq(1+x)^r\leq C_r(1+x^r).
\]
Hence \((1+x)^{2\theta}\) and \(1+x^{2\theta}\) are equivalent uniformly
for \(x\geq0\).

Let \(u=\sum_{j\geq1}u_je_j\).  From
\eqref{eq:qj-fractional-comparison},
\[
    \sum_{j=1}^{\infty}
    q_j^{2\theta}|u_j|^2<\infty
\]
if and only if
\[
    \sum_{j=1}^{\infty}
    \bigl(1+\lambda_j^{2\degree\theta}\bigr)|u_j|^2<\infty.
\]
Since \(u\in\Hilb\), the latter condition is equivalent to
\[
    \sum_{j=1}^{\infty}
    \lambda_j^{2\degree\theta}|u_j|^2<\infty,
\]
which is precisely
\(u\in\Dom(\Bop^{\degree\theta})\).  The same two-sided estimate gives the
equivalence of norms.
\end{proof}

We shall use the notation
\begin{equation}
    \StrongSpace
    :=
    \Dom(\Aop)
    =
    \Dom(\Bop^\degree),
    \qquad
    \EnergySpace
    :=
    \Dom(\Aop^{1/2})
    =
    \Dom(\Bop^{\degree/2}).
    \label{eq:E-V-definition}
\end{equation}
Because \(\Aop\geq I\), the quantities
\begin{equation}
    \norm{u}_{\StrongSpace}
    :=
    \norm{\Aop u}_{\Hilb},
    \qquad
    u\in\StrongSpace,
    \label{eq:E-norm}
\end{equation}
and
\begin{equation}
    \norm{u}_{\EnergySpace}
    :=
    \norm{\Aop^{1/2}u}_{\Hilb},
    \qquad
    u\in\EnergySpace,
    \label{eq:V-norm}
\end{equation}
are equivalent to the corresponding graph norms.

\begin{corollary}
\label{cor:hilbert-triple}
The embeddings
\begin{equation}
    \StrongSpace
    \hookrightarrow
    \EnergySpace
    \hookrightarrow
    \Hilb
    \label{eq:hilbert-triple}
\end{equation}
are dense, continuous, and compact.
More generally, if \(0\leq\theta_1<\theta_2\), then
\begin{equation}
    \Dom(\Aop^{\theta_2})
    \Subset
    \Dom(\Aop^{\theta_1}).
    \label{eq:compact-fractional-embedding}
\end{equation}
\end{corollary}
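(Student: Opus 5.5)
The whole argument can be carried out in the eigenbasis \(\{e_j\}\) of \Cref{lem:shift-comparison}, where \(\Aop e_j=q_je_j\) with \(q_j\geq1\) and \(q_j\to\infty\). By \eqref{eq:A-fractional-domain}, the norm of \(\Dom(\Aop^\theta)\) is
\[
\norm{u}_{\Dom(\Aop^\theta)}^2=\sum_{j\geq1}q_j^{2\theta}|u_j|^2 .
\]
So every space in the scale is a weighted \(\ell^2\) space. I will prove the general statement \eqref{eq:compact-fractional-embedding} first. The particular chain \eqref{eq:hilbert-triple} then follows by taking \((\theta_1,\theta_2)=(1/2,1)\) and \((0,1/2)\), together with \(\StrongSpace=\Dom(\Aop)\), \(\EnergySpace=\Dom(\Aop^{1/2})\) and \(\Dom(\Aop^0)=\Hilb\).

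\emph{Continuity.} For \(0\leq\theta_1<\theta_2\), the bound \(q_j\geq1\) gives \(q_j^{2\theta_1}\leq q_j^{2\theta_2}\) for every \(j\). Hence \(\Dom(\Aop^{\theta_2})\subset\Dom(\Aop^{\theta_1})\) and \(\norm{\Aop^{\theta_1}u}_{\Hilb}\leq\norm{\Aop^{\theta_2}u}_{\Hilb}\), so the inclusion has norm at most one.

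\emph{Density.} Every finite linear combination of the \(e_j\) lies in all \(\Dom(\Aop^\theta)\). Given \(u\in\Dom(\Aop^{\theta_1})\), the truncations \(P_Nu=\sum_{j\leq N}u_je_j\) satisfy
\[
\norm{\Aop^{\theta_1}(u-P_Nu)}_{\Hilb}^2=\sum_{j>N}q_j^{2\theta_1}|u_j|^2\to0,
\]
since this is the tail of a convergent series. So \(\Span\{e_j\}\), and therefore \(\Dom(\Aop^{\theta_2})\), is dense in \(\Dom(\Aop^{\theta_1})\).

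\emph{Compactness.} This is the only step with some content, and I expect it to be the main obstacle. I would mirror the finite-rank argument in the proof of \Cref{lem:shift-comparison}. The map \(\Aop^{\theta_2}\) is an isometric isomorphism \(\Dom(\Aop^{\theta_2})\to\Hilb\), and \(\Aop^{\theta_1}\) is one \(\Dom(\Aop^{\theta_1})\to\Hilb\). Under these identifications the inclusion becomes the diagonal operator \(\Aop^{\theta_1-\theta_2}\) on \(\Hilb\), with multipliers \(q_j^{-(\theta_2-\theta_1)}\).

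Because \(\theta_2-\theta_1>0\) and \(q_j\to\infty\), these multipliers tend to zero. The truncations \(K_N\) to the first \(N\) modes are finite rank and satisfy
\[
\norm{\Aop^{\theta_1-\theta_2}-K_N}_{\mathcal L(\Hilb)}=\sup_{j>N}q_j^{-(\theta_2-\theta_1)}\to0 .
\]
The diagonal operator is therefore a norm limit of finite-rank operators, hence compact. Conjugating back by the isometries shows that the inclusion \(\Dom(\Aop^{\theta_2})\hookrightarrow\Dom(\Aop^{\theta_1})\) is compact.

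The one point needing care is the identification of norms. The statement concerns the norms \(\norm{\Aop^\theta\cdot}_{\Hilb}\), which \Cref{prop:fractional-domain-equivalence} shows are equivalent to the \(\Bop\)-graph norms. Since equivalent norms do not affect continuity, density or compactness, the result holds for either choice of norm.
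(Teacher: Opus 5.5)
Your proof is correct, and it differs from the paper's only in how compactness is packaged.

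\textbf{Continuity.} This is argued exactly as in the paper.

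\textbf{Compactness.} The paper works sequentially. It takes a bounded sequence in $\Dom(\Aop^{\theta_2})$ and uses the uniform tail bound
\[
\norm{(I-\mathcal P_N)u^k}_{\Dom(\Aop^{\theta_1})}^2\le \Bigl(\sup_{j>N}q_j^{-2(\theta_2-\theta_1)}\Bigr)M^2 .
\]
It then extracts a convergent subsequence by a diagonal argument. You instead conjugate the inclusion, via the isometric isomorphisms $\Aop^{\theta_2}$ and $\Aop^{\theta_1}$ onto $\Hilb$, into the diagonal operator with multipliers $q_j^{-(\theta_2-\theta_1)}$. You then use the fact that an operator-norm limit of finite-rank operators is compact. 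This is the same device the paper uses for $\Aop^{-1}$ in \Cref{lem:shift-comparison}.

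The decisive estimate $\sup_{j>N}q_j^{-(\theta_2-\theta_1)}\to0$ is identical in both arguments. Your version hands the diagonal/total-boundedness step to a standard theorem, so it is shorter and avoids the sketchy subsequence extraction.

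\textbf{Density.} Here your argument is more complete than the paper's. The paper only observes that finite eigenfunction sums lie in every $\Dom(\Aop^\theta)$ and are dense in $\Hilb$. That covers $\EnergySpace\hookrightarrow\Hilb$, but it does not by itself show that $\StrongSpace$ is dense in $\EnergySpace$ for the $\EnergySpace$-norm. Your truncation estimate $\sum_{j>N}q_j^{2\theta_1}|u_j|^2\to0$ supplies exactly this missing point, for every pair $\theta_1<\theta_2$.
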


\begin{proof}
Density follows because every finite linear combination of the eigenfunctions
\(\{e_j\}\) belongs to \(\Dom(\Aop^\theta)\) for every \(\theta\geq0\), and
such finite combinations are dense in \(\Hilb\).

Since \(q_j\geq1\),
\[
    q_j^{2\theta_1}
    \leq
    q_j^{2\theta_2},
\]
so
\[
    \norm{u}_{\Dom(\Aop^{\theta_1})}
    \leq
    \norm{u}_{\Dom(\Aop^{\theta_2})}.
\]
Thus the embeddings are continuous.

It remains to prove compactness.  Let \((u^k)_{k\geq1}\) be bounded in
\(\Dom(\Aop^{\theta_2})\); hence, for some \(M>0\),
\[
    \sum_{j=1}^{\infty}
    q_j^{2\theta_2}
    |u_j^k|^2
    \leq M^2,
    \qquad k\geq1.
\]
For \(N\in\NN\), let
\[
    \mathcal P_Nu
    :=
    \sum_{j=1}^{N}u_j e_j.
\]
Then
\[
\begin{aligned}
    \norm{(I-\mathcal P_N)u^k}_{\Dom(\Aop^{\theta_1})}^2
    &=
    \sum_{j>N}
    q_j^{2\theta_1}|u_j^k|^2
\\
    &=
    \sum_{j>N}
    q_j^{-2(\theta_2-\theta_1)}
    q_j^{2\theta_2}|u_j^k|^2
\\
    &\leq
    \left(
        \sup_{j>N}q_j^{-2(\theta_2-\theta_1)}
    \right)M^2.
\end{aligned}
\]
Since \(q_j\to\infty\), the right-hand side tends to zero as
\(N\to\infty\), uniformly in \(k\).  For fixed \(N\), the sequence
\((\mathcal P_Nu^k)_k\) lies in a bounded subset of a finite-dimensional
space and therefore possesses a convergent subsequence.  A diagonal argument,
combined with the uniform tail estimate above, yields a subsequence of
\((u^k)_k\) converging in \(\Dom(\Aop^{\theta_1})\).
This proves \eqref{eq:compact-fractional-embedding}.  Taking
\((\theta_2,\theta_1)=(1,\frac12)\) and
\((\frac12,0)\) gives \eqref{eq:hilbert-triple}.
\end{proof}

\begin{remark}[Boundary conditions encoded by the spectral realization]
\label{rem:boundary-conditions}
The spectral domain
\[
    \Dom(\Bop^{\degree})
\]
serves as the primary definition of the higher-order operator domain.  It should not
be replaced by the notation \(H_0^{2\degree}(\Om)\).  On a smooth domain,
elliptic regularity and induction show that
\begin{equation}
    \Dom(\Bop^\degree)
    =
    \left\{
        u\in H^{2\degree}(\Om):
        \Delta^k u\big|_{\partial\Om}=0,
        \quad
        k=0,\ldots,\degree-1
    \right\}.
    \label{eq:Bm-navier-domain}
\end{equation}
Indeed, if \(u\in\Dom(\Bop^\degree)\), then
\(\Bop^ku\in\Dom(\Bop)\) for \(k=0,\ldots,\degree-1\); hence each
\(\Bop^ku\) has zero Dirichlet trace, and repeated elliptic regularity gives
\(u\in H^{2\degree}(\Om)\).  Conversely, if
\(u\in H^{2\degree}(\Om)\) satisfies the boundary conditions in
\eqref{eq:Bm-navier-domain}, then
\((-\Delta)^ku\in H^2(\Om)\cap H_0^1(\Om)=\Dom(\Bop)\) for
\(k=0,\ldots,\degree-1\), and the recursive definition of operator powers
gives \(u\in\Dom(\Bop^\degree)\).

For example,
\[
    \Dom(\Bop)
    =
    H^2(\Om)\cap H_0^1(\Om),
\]
whereas
\[
    \Dom(\Bop^2)
    =
    \left\{
        u\in H^4(\Om):
        u|_{\partial\Om}=0,\ 
        \Delta u|_{\partial\Om}=0
    \right\}.
\]
Thus the fourth-order realization associated with \(\Bop^2\) carries Navier
boundary conditions rather than the clamped conditions
\(u=\partial_\nu u=0\).  No analogous identification
\(\Dom(\Bop^{\degree/2})=H_0^\degree(\Om)\) will be used; fractional
Dirichlet domains retain boundary conditions determined by the spectral
power.  For fractional Dirichlet domains see
\cite{FeffermanHajdukRobinson2022}; higher-order Navier realizations are
treated systematically in \cite{GazzolaGrunauSweers2010}.
\end{remark}

\subsection{Analytic semigroup estimates}
\label{subsec:semigroup-estimates}

The positivity and self-adjointness of \(\Aop\) yield an analytic semigroup
with estimates adapted exactly to the spaces \(\StrongSpace\) and
\(\EnergySpace\).

\begin{proposition}
\label{prop:analytic-semigroup}
The operator \(-\Aop\) generates a strongly continuous analytic contraction
semigroup
\[
    S(t):=\Semigroup{t},
    \qquad
    t\geq0,
\]
on \(\Hilb\), given spectrally by
\begin{equation}
    S(t)u
    =
    \sum_{j=1}^{\infty}
    e^{-q_jt}u_je_j.
    \label{eq:semigroup-spectral}
\end{equation}
For every \(\alpha\geq0\),
\begin{equation}
    \norm{\Aop^\alpha S(t)}_{\mathcal L(\Hilb)}
    \leq
    C_\alpha t^{-\alpha},
    \qquad
    t>0,
    \label{eq:semigroup-smoothing}
\end{equation}
where \(C_0=1\), while for \(\alpha>0\) one may take
\[
    C_\alpha
    =
    \left(\frac{\alpha}{e}\right)^\alpha.
\]
Moreover,
\[
    S(t)\Dom(\Aop^\theta)
    \subset
    \Dom(\Aop^\theta),
    \qquad
    \norm{S(t)u}_{\Dom(\Aop^\theta)}
    \leq
    \norm{u}_{\Dom(\Aop^\theta)}
\]
for every \(\theta\geq0\) and \(t\geq0\).
\end{proposition}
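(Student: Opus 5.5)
The plan is to work entirely in the eigenbasis \(\{e_j\}\) supplied by \Cref{lem:dirichlet-spectral} and \Cref{lem:shift-comparison}, where \(\Aop e_j=q_je_j\) with \(q_j\geq1\) and \(q_j\to\infty\). I will define \(S(t)\) by \eqref{eq:semigroup-spectral} and show that it is a contraction semigroup generated by \(-\Aop\). Analyticity will then follow from self-adjointness and positivity, or more concretely from the smoothing estimate \eqref{eq:semigroup-smoothing} with \(\alpha=1\).

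First, I will check that \(S(t)\) is a bounded operator with \(\norm{S(t)}\leq1\), since \(0<e^{-q_jt}\leq1\). The semigroup law \(S(t+s)=S(t)S(s)\) and \(S(0)=I\) are immediate coefficientwise. For strong continuity at \(t=0\), I will write
\[
\norm{S(t)u-u}_{\Hilb}^2=\sum_j(1-e^{-q_jt})^2|u_j|^2
\]
and apply dominated convergence, using that each term is bounded by \(|u_j|^2\) and tends to zero.

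To identify the generator, I will show that for \(u\in\Dom(\Aop)\)
\[
\norm{t^{-1}(S(t)u-u)+\Aop u}^2=\sum_j\Bigl(\frac{e^{-q_jt}-1}{t}+q_j\Bigr)^2|u_j|^2\to0,
\]
again by dominated convergence, using \(|(1-e^{-x})/x|\leq1\) to bound each term by \(4q_j^2|u_j|^2\). Conversely, if the difference quotient converges to some \(g\), then testing against \(e_j\) gives \(g_j=-q_ju_j\), so \(\sum q_j^2|u_j|^2<\infty\). Hence the generator is exactly \(-\Aop\) with domain \(\Dom(\Aop)\).

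For \eqref{eq:semigroup-smoothing}, the key step is the identity
\[
\norm{\Aop^\alpha S(t)}=\sup_j q_j^\alpha e^{-q_jt}\leq\sup_{x>0}x^\alpha e^{-xt}.
\]
Elementary calculus places the maximizer at \(x=\alpha/t\), which gives \((\alpha/e)^\alpha t^{-\alpha}\); for \(\alpha=0\) the bound is just the contraction estimate. Analyticity then follows from the standard criterion: a bounded semigroup that maps \(\Hilb\) into \(\Dom(\Aop)\) for \(t>0\) and satisfies \(\norm{\Aop S(t)}\leq C/t\) is analytic. Alternatively, I can note that \(z\mapsto\sum e^{-q_jz}u_je_j\) is holomorphic on \(\operatorname{Re}z>0\), since the series converges locally uniformly there.

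Finally, \(\Aop^\theta\) and \(S(t)\) are diagonal in the same basis, so they commute on \(\Dom(\Aop^\theta)\). This gives
\[
\sum_j q_j^{2\theta}e^{-2q_jt}|u_j|^2\leq\sum_j q_j^{2\theta}|u_j|^2,
\]
which yields both the invariance and the contraction in \(\Dom(\Aop^\theta)\). The main point needing care is the generator identification, namely the domain equality; everything else is coefficientwise. I expect the dominated convergence bound above to settle it.
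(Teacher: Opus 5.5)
Your proposal is correct and follows essentially the same route as the paper. The smoothing bound via $\sup_{x>0}x^\alpha e^{-xt}=(\alpha/e)^\alpha t^{-\alpha}$ and the coefficientwise contraction estimate in $\Dom(\Aop^\theta)$ are exactly the paper's arguments; the only difference is that you check the generator identification and analyticity directly in the eigenbasis (dominated convergence and the $\norm{\Aop S(t)}\leq C/t$ criterion), whereas the paper cites Pazy for the fact that a positive self-adjoint operator generates an analytic contraction semigroup.
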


\begin{proof}
Since \(\Aop\) is positive and self-adjoint, \(-\Aop\) generates an analytic
contraction semigroup on \(\Hilb\); see, for example,
\cite{Pazy1983}.  Its spectral representation is
\eqref{eq:semigroup-spectral}, and strong continuity follows from dominated
convergence.

For \(\alpha>0\), the spectral theorem gives
\[
    \norm{\Aop^\alpha S(t)}_{\mathcal L(\Hilb)}
    =
    \sup_{j\geq1}q_j^\alpha e^{-q_jt}
    \leq
    \sup_{x>0}x^\alpha e^{-xt}
    =
    \left(\frac{\alpha}{e}\right)^\alpha t^{-\alpha}.
\]
For \(\alpha=0\), contractivity gives \(C_0=1\).  Finally, if
\(u\in\Dom(\Aop^\theta)\), then
\[
    \norm{\Aop^\theta S(t)u}_{\Hilb}^2
    =
    \sum_{j=1}^\infty q_j^{2\theta}e^{-2q_jt}|u_j|^2
    \leq
    \norm{\Aop^\theta u}_{\Hilb}^2.
\]
This proves the stated smoothing and fractional-domain estimates.
\end{proof}

\section{The sphere-constrained linear flow and spectral selection}
\label{sec:linear-selection}

We now consider the constrained linear equation.  The normalized semigroup
formula gives the spectral coefficients explicitly, and from that formula one
can read off both support invariance and the selected eigenspace.

\subsection{Normalized semigroup representation}
\label{subsec:normalized-linear-flow}

Recall that
\[
    \Sphere
    =
    \{u\in\Hilb:\norm{u}_{\Hilb}=1\},
\]
and that, for \(u\in\Sphere\),
\[
    \Proj{u}v
    =
    v-\inner{v}{u}_{\Hilb}u
\]
is the orthogonal projection of \(\Hilb\) onto
\[
    \Tan{u}
    =
    \{v\in\Hilb:\inner{v}{u}_{\Hilb}=0\}.
\]
We consider the sphere-constrained linear problem
\begin{equation}
\label{eq:linear-constrained-flow}
    \begin{cases}
    \displaystyle
    \pt u(t)
    =
    -\Proj{u(t)}\Ppoly(\Bop)u(t),
    & t>0,
    \\[1ex]
    u(0)=u_0\in\Sphere.
    \end{cases}
\end{equation}
Whenever \(u(t)\in\Dom(\Ppoly(\Bop))\cap\Sphere\), this equation takes the
equivalent form
\begin{equation}
\label{eq:linear-constrained-expanded}
    \pt u
    =
    -\Ppoly(\Bop)u
    +
    \inner{\Ppoly(\Bop)u}{u}_{\Hilb}u.
\end{equation}

The positive shift introduced in \Cref{sec:operator-setting} provides a
convenient way to define the linear semigroup even when
\(\Ppoly(\Bop)\) is not positive.

\begin{lemma}
\label{lem:polynomial-semigroup}
Let
\[
    S_P(t):=e^{-t\Ppoly(\Bop)},
    \qquad t\geq0.
\]
Then
\begin{equation}
    S_P(t)
    =
    e^{\shift t}e^{-t\Aop},
    \qquad
    t\geq0,
    \label{eq:SP-shift-relation}
\end{equation}
and \(S_P\) is a strongly continuous analytic semigroup on \(\Hilb\).
Moreover,
\begin{equation}
    \norm{S_P(t)}_{\mathcal L(\Hilb)}
    \leq
    e^{-\mu_Pt},
    \qquad
    t\geq0,
    \label{eq:SP-growth-bound}
\end{equation}
where \(\mu_P\) is defined in \eqref{eq:mu-P}.  For every nonzero
\(v\in\Hilb\),
\begin{equation}
    \frac{S_P(t)v}{\norm{S_P(t)v}_{\Hilb}}
    =
    \frac{e^{-t\Aop}v}{\norm{e^{-t\Aop}v}_{\Hilb}},
    \qquad
    t\geq0.
    \label{eq:normalized-shift-independence}
\end{equation}
\end{lemma}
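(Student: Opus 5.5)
The plan is to work entirely in the eigenbasis \(\{e_j\}\) of \Cref{lem:dirichlet-spectral}, and to read \(S_P(t)\) as the spectral multiplier
\[
    S_P(t)u=\sum_{j=1}^\infty e^{-t\Ppoly(\lambda_j)}u_je_j .
\]
First we check that this series defines a bounded operator. Since \(\Ppoly(\lambda_j)\geq\mu_P\) for every \(j\) by \eqref{eq:mu-P}, each multiplier satisfies \(0<e^{-t\Ppoly(\lambda_j)}\leq e^{-\mu_Pt}\). Parseval then gives \(\norm{S_P(t)u}_{\Hilb}\leq e^{-\mu_Pt}\norm{u}_{\Hilb}\), which is \eqref{eq:SP-growth-bound}.

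Next comes the shift relation. Because \(q_j=\Ppoly(\lambda_j)+\shift\), we have the identity
\[
    e^{-t\Ppoly(\lambda_j)}=e^{\shift t}e^{-q_jt}
\]
for every \(j\). Comparing this coefficientwise with \eqref{eq:semigroup-spectral} from \Cref{prop:analytic-semigroup} gives \eqref{eq:SP-shift-relation}. If one prefers to define \(S_P\) as the semigroup generated by \(-\Ppoly(\Bop)\), the same conclusion follows from the bounded-perturbation principle. Indeed, \(-\Ppoly(\Bop)=-\Aop+\shift I\), and \(\shift I\) commutes with \(\Aop\). So \(e^{\shift t}\Semigroup{t}\) is a \(C_0\)-semigroup whose generator is \(-\Aop+\shift I\), with domain \(\Dom(\Aop)=\Dom(\Ppoly(\Bop))\) by \Cref{lem:shift-comparison}. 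Uniqueness of the semigroup with a given generator then identifies the two.

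The semigroup law, strong continuity and analyticity all transfer from \(\Semigroup{t}\) through multiplication by the scalar entire function \(e^{\shift t}\). For analyticity this means the extension \(e^{\shift z}e^{-z\Aop}\) on the same sector. The semigroup law holds because \(e^{\shift(t+s)}=e^{\shift t}e^{\shift s}\). I expect this transfer to be the only point needing care. Alternatively, \(-\Ppoly(\Bop)\) is self-adjoint (\Cref{prop:domain-PB}) and bounded above by \(-\mu_P\), so it generates an analytic semigroup directly.

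Finally, for \(v\neq0\), injectivity of \(\Semigroup{t}\) gives \(\Semigroup{t}v\neq0\): all coefficients \(e^{-q_jt}v_j\) vanish only if \(v=0\). Hence both normalizations are defined. The scalar \(e^{\shift t}>0\) cancels between numerator and norm, which yields \eqref{eq:normalized-shift-independence}.
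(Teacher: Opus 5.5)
Your proof is correct and follows essentially the same route as the paper. Both arguments compare \(e^{-t\Ppoly(\lambda_j)}=e^{\shift t}e^{-q_jt}\) coefficientwise in the eigenbasis, obtain \eqref{eq:SP-growth-bound} from Parseval and \(\Ppoly(\lambda_j)\geq\mu_P\), transfer strong continuity and analyticity through the scalar factor \(e^{\shift t}\), and cancel that factor under normalization; your check that \(e^{-t\Aop}v\neq0\) for \(v\neq0\) makes explicit a point the paper leaves implicit.
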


\begin{proof}
Since
\[
    \Aop=\Ppoly(\Bop)+\shift I,
\]
the spectral representations of \(\Aop\) and \(\Ppoly(\Bop)\) with respect
to the eigenbasis \(\{e_j\}_{j\geq1}\) give
\[
    e^{-t\Ppoly(\Bop)}e_j
    =
    e^{-t\Ppoly(\lambda_j)}e_j
\]
and
\[
    e^{-t\Aop}e_j
    =
    e^{-t(\Ppoly(\lambda_j)+\shift)}e_j.
\]
Consequently,
\[
    e^{-t\Ppoly(\Bop)}e_j
    =
    e^{\shift t}e^{-t\Aop}e_j
\]
for every \(j\geq1\), and hence \eqref{eq:SP-shift-relation} follows by
density of the finite spectral sums in \(\Hilb\).

By \Cref{prop:analytic-semigroup}, \(-\Aop\) generates an analytic
contraction semigroup.  Multiplication by the scalar factor
\(e^{\shift t}\) preserves strong continuity and analyticity, so \(S_P\)
is a strongly continuous analytic semigroup.  Since
\[
    \Ppoly(\lambda_j)\geq\mu_P,
\]
we obtain
\[
\begin{aligned}
    \norm{S_P(t)u}_{\Hilb}^2
    &=
    \sum_{j=1}^{\infty}
    e^{-2t\Ppoly(\lambda_j)}|u_j|^2
\\
    &\leq
    e^{-2\mu_Pt}
    \sum_{j=1}^{\infty}|u_j|^2,
\end{aligned}
\]
which proves \eqref{eq:SP-growth-bound}.

Finally, \eqref{eq:SP-shift-relation} gives
\[
    S_P(t)v
    =
    e^{\shift t}e^{-t\Aop}v.
\]
The positive scalar \(e^{\shift t}\) cancels upon normalization, yielding
\eqref{eq:normalized-shift-independence}.
\end{proof}

The constrained equation admits an explicit global solution for every
initial datum on the \(L^2\)-sphere.

\begin{theorem}[Normalized semigroup representation]
\label{thm:normalized-semigroup}
For every \(u_0\in\Sphere\), define
\begin{equation}
\label{eq:normalized-semigroup-formula}
    u(t)
    :=
    \frac{S_P(t)u_0}
         {\norm{S_P(t)u_0}_{\Hilb}}
    =
    \frac{e^{-t\Aop}u_0}
         {\norm{e^{-t\Aop}u_0}_{\Hilb}},
    \qquad
    t\geq0.
\end{equation}
Then
\begin{equation}
    u\in C([0,\infty);\Hilb)
       \cap C((0,\infty);\Dom(\Ppoly(\Bop)))
       \cap C^1((0,\infty);\Hilb),
    \label{eq:linear-solution-regularity}
\end{equation}
\begin{equation}
    u(t)\in\Sphere,
    \qquad
    t\geq0,
    \label{eq:linear-sphere-invariance}
\end{equation}
and \(u\) satisfies \eqref{eq:linear-constrained-flow} for every \(t>0\).
This solution is unique in the class
\[
    C([0,\infty);\Hilb)
    \cap
    C((0,\infty);\Dom(\Ppoly(\Bop)))
    \cap
    C^1((0,\infty);\Hilb).
\]
If \(u_0\in\Dom(\Ppoly(\Bop))\cap\Sphere\), then
\[
    u\in
    C([0,\infty);\Dom(\Ppoly(\Bop)))
    \cap
    C^1([0,\infty);\Hilb),
\]
and the equation holds also at \(t=0\).
\end{theorem}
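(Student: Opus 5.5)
The plan is to work entirely through the spectral coordinates and the normalized semigroup formula \eqref{eq:normalized-semigroup-formula}, first establishing existence and the regularity \eqref{eq:linear-solution-regularity}, then sphere invariance and the equation, and finally uniqueness. Write \(v(t):=S_P(t)u_0\) and \(n(t):=\norm{v(t)}_{\Hilb}^2=\sum_j |c_j|^2e^{-2t\Ppoly(\lambda_j)}\). Since \(u_0\neq0\), some \(c_j\neq0\), so \(n(t)>0\) for all \(t\geq0\); thus \(u(t)\) is well defined and \eqref{eq:linear-sphere-invariance} is immediate. The map \(t\mapsto v(t)\) is in \(C([0,\infty);\Hilb)\) by \Cref{lem:polynomial-semigroup}. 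By analyticity (or directly from \eqref{eq:semigroup-smoothing} with \(\alpha=1\) and \Cref{lem:shift-comparison}), it lies in \(C((0,\infty);\Dom(\Aop))=C((0,\infty);\Dom(\Ppoly(\Bop)))\) and in \(C^1((0,\infty);\Hilb)\) with \(v'=-\Ppoly(\Bop)v\). Continuity of \(\Aop^\alpha S(t)\) away from \(t=0\) follows from dominated convergence in the spectral sums applied to \(S(t-s)S(s)\). Then \(n\) is \(C^1\) on \((0,\infty)\) with \(n'=-2\inner{\Ppoly(\Bop)v}{v}_{\Hilb}\), and the quotient rule gives
\[
    u'=\frac{v'}{n^{1/2}}-\frac{n'}{2n^{3/2}}v
      =-\Ppoly(\Bop)u+\inner{\Ppoly(\Bop)u}{u}_{\Hilb}u,
\]
which is \eqref{eq:linear-constrained-expanded}, equivalently \eqref{eq:linear-constrained-flow}. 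If \(u_0\in\Dom(\Ppoly(\Bop))\), then the contraction in \(\Dom(\Aop)\) from \Cref{prop:analytic-semigroup}, together with strong continuity of \(S\) restricted to \(\Dom(\Aop)\) (\(\Aop\) commutes with \(S\)), extends both continuity statements to \(t=0\). The standard fact that \(S\) is differentiable at \(0\) on the generator domain then gives the equation at \(t=0\).

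For uniqueness, take a second solution \(w\) in the stated class and try to undo the normalization. Set \(\phi(t):=\int_{t_0}^t\inner{\Ppoly(\Bop)w(s)}{w(s)}_{\Hilb}\dd s\) and \(z(t):=e^{-\phi(t)}w(t)\). On \((0,\infty)\) the function \(z\) satisfies the linear equation \(z'=-\Ppoly(\Bop)z\), since the Rayleigh term cancels; here continuity of \(w\) into \(\Dom(\Ppoly(\Bop))\) makes \(\phi\) a \(C^1\) function on \((0,\infty)\). Rather than invoke abstract uniqueness, I would test against each eigenvector: \(\frac{\dd}{\dt}\inner{z}{e_j}_{\Hilb}=-\Ppoly(\lambda_j)\inner{z}{e_j}_{\Hilb}\) by self-adjointness. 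Hence \(\inner{z(t)}{e_j}_{\Hilb}=e^{-(t-s)\Ppoly(\lambda_j)}\inner{z(s)}{e_j}_{\Hilb}\) for \(0<s<t\), so \(z(t)=S_P(t-s)z(s)\), and therefore \(w(t)\) is the normalization of \(S_P(t-s)w(s)\).

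The main obstacle is passing \(s\downarrow0\), because \(\phi\) may blow up near \(0\) when \(u_0\notin\Dom(\Ppoly(\Bop))\) and only \(C([0,\infty);\Hilb)\) is available at the origin. I would avoid \(\phi\) here and use the normalized identity directly, \(w(t)=S_P(t-s)w(s)/\norm{S_P(t-s)w(s)}_{\Hilb}\), which is scale-free. As \(s\to0\), \(w(s)\to u_0\) in \(\Hilb\), and \(S_P(t-s)w(s)\to S_P(t)u_0\) by the uniform bound \eqref{eq:SP-growth-bound} combined with strong continuity. The denominator converges to \(\norm{S_P(t)u_0}_{\Hilb}>0\). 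Therefore \(w(t)=u(t)\) for every \(t>0\), and trivially at \(t=0\).
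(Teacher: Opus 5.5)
Your proposal is correct and follows the paper's proof essentially step by step. The common steps are: the quotient-rule derivation of the projected equation for $v/\norm{v}_{\Hilb}$; the extension to $t=0$ via the generator domain; and uniqueness by removing the Rayleigh term with an integrating factor, then letting $s\downarrow0$ in the scale-free identity $w(t)=S_P(t-s)w(s)/\norm{S_P(t-s)w(s)}_{\Hilb}$. The one minor variation is that you obtain $z(t)=S_P(t-s)z(s)$ by testing against the eigenvectors $e_j$ (scalar ODEs), while the paper differentiates $r\mapsto S_P(t-r)y(r)$. Both are valid, and yours avoids the operator-valued product rule.
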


\begin{proof}
Set
\[
    v(t):=S_P(t)u_0,
    \qquad
    Z(t):=\norm{v(t)}_{\Hilb}.
\]
Since
\[
    v(t)
    =
    \sum_{j=1}^{\infty}
    e^{-t\Ppoly(\lambda_j)}u_{0,j}e_j,
    \qquad
    u_{0,j}:=\inner{u_0}{e_j}_{\Hilb},
\]
and \(u_0\neq0\), one has
\[
    Z(t)^2
    =
    \sum_{j=1}^{\infty}
    e^{-2t\Ppoly(\lambda_j)}|u_{0,j}|^2
    >0
\]
for every \(t\geq0\).  Thus \eqref{eq:normalized-semigroup-formula} is
well defined.

Strong continuity of \(S_P\) gives
\[
    v\in C([0,\infty);\Hilb),
\]
and therefore \(Z\in C([0,\infty))\).  Since \(Z(t)>0\),
\[
    u\in C([0,\infty);\Hilb).
\]
Moreover, the normalization immediately gives
\[
    \norm{u(t)}_{\Hilb}=1,
\]
which proves \eqref{eq:linear-sphere-invariance}.

For \(t>0\), analyticity of \(S_P\) gives
\[
    v(t)\in\Dom(\Ppoly(\Bop))
\]
and
\[
    v_t(t)
    =
    -\Ppoly(\Bop)v(t)
\]
in \(\Hilb\).  Hence \(Z\) is differentiable on \((0,\infty)\), and
\[
\begin{aligned}
    \frac{\dd}{\dd t}Z(t)^2
    &=
    2\inner{v_t(t)}{v(t)}_{\Hilb}
\\
    &=
    -2\inner{\Ppoly(\Bop)v(t)}{v(t)}_{\Hilb}.
\end{aligned}
\]
Since \(v(t)=Z(t)u(t)\), it follows that
\begin{equation}
    Z'(t)
    =
    -Z(t)
    \inner{\Ppoly(\Bop)u(t)}{u(t)}_{\Hilb}.
    \label{eq:Z-prime}
\end{equation}
Differentiating \(u=v/Z\) and using \eqref{eq:Z-prime}, we obtain
\[
\begin{aligned}
    u_t(t)
    &=
    \frac{v_t(t)}{Z(t)}
    -
    \frac{Z'(t)}{Z(t)}u(t)
\\
    &=
    -\Ppoly(\Bop)u(t)
    +
    \inner{\Ppoly(\Bop)u(t)}{u(t)}_{\Hilb}u(t).
\end{aligned}
\]
This is precisely \eqref{eq:linear-constrained-expanded}, hence
\eqref{eq:linear-constrained-flow} holds for every \(t>0\).

To prove uniqueness, let \(\widetilde u\) be another solution in the stated
class.  For \(t>0\), define
\[
    \Lambda(t)
    :=
    \inner{\Ppoly(\Bop)\widetilde u(t)}
           {\widetilde u(t)}_{\Hilb}.
\]
The assumed regularity implies that \(\Lambda\) is continuous on every compact
subinterval of \((0,\infty)\).  Fix \(0<s<t\) and set
\[
    y(r)
    :=
    \exp\left(
        -\int_s^r\Lambda(\tau)\,\dd\tau
    \right)
    \widetilde u(r),
    \qquad
    r\in[s,t].
\]
Using
\[
    \widetilde u_t
    =
    -\Ppoly(\Bop)\widetilde u
    +
    \Lambda\widetilde u,
\]
we obtain
\[
    y_t(r)
    =
    -\Ppoly(\Bop)y(r).
\]
To identify this solution with the semigroup orbit, fix
\(s<r<t\) and define
\[
    z(r):=S_P(t-r)y(r).
\]
For \(r\in(s,t)\), one has \(y(r)\in\Dom(\Ppoly(\Bop))\), and analyticity
of \(S_P\) permits differentiation in \(\Hilb\).  Since
\[
    \frac{\dd}{\dd r}S_P(t-r)v
    =
    S_P(t-r)\Ppoly(\Bop)v,
    \qquad v\in\Dom(\Ppoly(\Bop)),
\]
and \(S_P(t-r)\) commutes with \(\Ppoly(\Bop)\), the product rule gives
\[
\begin{aligned}
    z'(r)
    &=
    S_P(t-r)\Ppoly(\Bop)y(r)
    +
    S_P(t-r)y_t(r)
    =0.
\end{aligned}
\]
Thus \(z\) is constant on \((s,t)\).  Letting \(r\downarrow s\) and using
strong continuity of \(S_P\) and continuity of \(y\), we obtain
\[
    y(t)
    =
    S_P(t-s)y(s)
    =
    S_P(t-s)\widetilde u(s).
\]
and hence
\[
    \widetilde u(t)
    =
    \exp\left(
        \int_s^t\Lambda(\tau)\,\dd\tau
    \right)
    S_P(t-s)\widetilde u(s).
\]
Since \(\norm{\widetilde u(t)}_{\Hilb}=1\), taking the norm of the last
identity determines the scalar factor uniquely and gives
\begin{equation}
    \widetilde u(t)
    =
    \frac{
        S_P(t-s)\widetilde u(s)
    }{
        \norm{S_P(t-s)\widetilde u(s)}_{\Hilb}
    }.
    \label{eq:uniqueness-representation-s}
\end{equation}
Letting \(s\downarrow0\), strong continuity of \(S_P\) and continuity of
\(\widetilde u\) at \(0\) yield
\[
    S_P(t-s)\widetilde u(s)
    \longrightarrow
    S_P(t)u_0
    \quad\text{in }\Hilb.
\]
The denominator in \eqref{eq:uniqueness-representation-s} converges to
\(\norm{S_P(t)u_0}_{\Hilb}>0\).  Consequently,
\[
    \widetilde u(t)
    =
    \frac{S_P(t)u_0}
         {\norm{S_P(t)u_0}_{\Hilb}}
    =
    u(t),
\]
which proves uniqueness.

If \(u_0\in\Dom(\Ppoly(\Bop))\), then the semigroup is strongly continuous
on its generator domain equipped with the graph norm.  Thus
\[
    S_P(\cdot)u_0
    \in
    C([0,\infty);\Dom(\Ppoly(\Bop)))
    \cap
    C^1([0,\infty);\Hilb),
\]
and the same regularity is inherited by the normalized trajectory because its
denominator remains strictly positive.  The differential identity derived
above therefore extends to \(t=0\).
\end{proof}

The representation \eqref{eq:normalized-semigroup-formula} also preserves
every fractional domain of the positive shifted operator.

\begin{corollary}
\label{cor:linear-fractional-regularity}
Let \(\theta\geq0\).  If
\[
    u_0\in\Dom(\Aop^\theta)\cap\Sphere,
\]
then the solution of \eqref{eq:linear-constrained-flow} satisfies
\[
    u\in C([0,\infty);\Dom(\Aop^\theta)).
\]
Moreover, for every \(t>0\),
\begin{equation}
    u(t)\in\bigcap_{\alpha\geq0}\Dom(\Aop^\alpha).
    \label{eq:linear-instantaneous-smoothing}
\end{equation}
\end{corollary}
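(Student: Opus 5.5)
The plan is to work from the representation \eqref{eq:normalized-semigroup-formula} in \Cref{thm:normalized-semigroup}, written as
\[
    u(t)=\frac{S(t)u_0}{Z_\shift(t)},
    \qquad
    Z_\shift(t):=\norm{S(t)u_0}_{\Hilb},
\]
where \(S(t)=\Semigroup{t}\) is the contraction semigroup of \Cref{prop:analytic-semigroup}. The shift cancels in the normalization by \eqref{eq:normalized-shift-independence}. The denominator \(Z_\shift\) is continuous on \([0,\infty)\) and strictly positive. Indeed, \(u_0\neq0\), so some coefficient \(u_{0,j}\neq0\), and therefore \(Z_\shift(t)\geq e^{-q_jt}|u_{0,j}|>0\). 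Thus every regularity statement reduces to the corresponding statement for the unnormalized orbit \(t\mapsto S(t)u_0\), followed by division by a continuous positive scalar.

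For the continuity claim, I would take \(u_0\in\Dom(\Aop^\theta)\) and note that \(\Aop^\theta\) commutes with \(S(t)\) on \(\Dom(\Aop^\theta)\). This holds because both are diagonal in \(\{e_j\}\). Hence \(\Aop^\theta S(t)u_0=S(t)\Aop^\theta u_0\). Strong continuity of \(S\) on \(\Hilb\), applied to the vector \(\Aop^\theta u_0\in\Hilb\), gives \(S(\cdot)u_0\in C([0,\infty);\Dom(\Aop^\theta))\) with the norm \(\norm{\Aop^\theta\cdot}_{\Hilb}\). Alternatively, one can argue by dominated convergence directly on \(\sum_j q_j^{2\theta}|e^{-q_jt}-e^{-q_js}|^2|u_{0,j}|^2\), with majorant \(4q_j^{2\theta}|u_{0,j}|^2\). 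Multiplying by the continuous scalar \(1/Z_\shift(t)\) preserves continuity in \(\Dom(\Aop^\theta)\), so \(u\in C([0,\infty);\Dom(\Aop^\theta))\).

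For the instantaneous smoothing \eqref{eq:linear-instantaneous-smoothing}, I would fix \(t>0\) and \(\alpha\geq0\) and invoke \eqref{eq:semigroup-smoothing}:
\[
    \norm{\Aop^\alpha S(t)u_0}_{\Hilb}\leq C_\alpha t^{-\alpha}\norm{u_0}_{\Hilb}=C_\alpha t^{-\alpha}.
\]
This needs no regularity of \(u_0\). Consequently \(S(t)u_0\in\Dom(\Aop^\alpha)\), and after dividing by \(Z_\shift(t)>0\), also \(u(t)\in\Dom(\Aop^\alpha)\). Since \(\alpha\) is arbitrary, the intersection over all \(\alpha\) follows.

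There is no real obstacle here. The only point needing care is the commutation or identification \(\Aop^\theta S(t)u_0=S(t)\Aop^\theta u_0\), which makes the continuity argument legitimate in the fractional norm. That identification is immediate from the spectral definitions \eqref{eq:A-fractional-domain} and \eqref{eq:semigroup-spectral}.
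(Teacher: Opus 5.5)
Your proposal is correct and follows essentially the same route as the paper. Both write \(u(t)\) as \(e^{-t\Aop}u_0\) divided by a continuous, strictly positive scalar, obtain continuity in \(\Dom(\Aop^\theta)\) from strong continuity of the semigroup on that space, and derive \eqref{eq:linear-instantaneous-smoothing} from the smoothing estimate \eqref{eq:semigroup-smoothing}; your commutation identity \(\Aop^\theta S(t)u_0=S(t)\Aop^\theta u_0\) just spells out the strong continuity on \(\Dom(\Aop^\theta)\) that the paper cites directly from \Cref{prop:analytic-semigroup}.
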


\begin{proof}
By \Cref{prop:analytic-semigroup},
\[
    e^{-t\Aop}\Dom(\Aop^\theta)
    \subset
    \Dom(\Aop^\theta),
\]
and \(e^{-t\Aop}\) is strongly continuous on \(\Dom(\Aop^\theta)\).
Since
\[
    u(t)
    =
    \frac{e^{-t\Aop}u_0}
         {\norm{e^{-t\Aop}u_0}_{\Hilb}},
\]
and the scalar denominator is positive and continuous, continuity of \(u\)
in \(\Dom(\Aop^\theta)\) follows.

For \(t>0\) and \(\alpha\geq0\),
\[
    \norm{\Aop^\alpha e^{-t\Aop}u_0}_{\Hilb}
    \leq
    C_\alpha t^{-\alpha}\norm{u_0}_{\Hilb}
\]
by \eqref{eq:semigroup-smoothing}.  Hence
\(e^{-t\Aop}u_0\in\Dom(\Aop^\alpha)\) for every \(\alpha\geq0\).
Normalization by a nonzero scalar does not change membership in these
domains, which proves \eqref{eq:linear-instantaneous-smoothing}.
\end{proof}

\subsection{Linear constrained energy}
\label{subsec:linear-energy}

The linear flow is the negative \(L^2\)-gradient flow of the quadratic form
associated with \(\Ppoly(\Bop)\), restricted to the unit sphere.  The shifted
operator provides a convenient definition of this quadratic energy on the
form domain \(\EnergySpace\).

\begin{definition}
\label{def:linear-energy}
For \(u\in\EnergySpace\), define
\begin{equation}
\label{eq:linear-energy}
    \mathcal E_P(u)
    :=
    \frac12
    \norm{\Aop^{1/2}u}_{\Hilb}^2
    -
    \frac{\shift}{2}
    \norm{u}_{\Hilb}^2.
\end{equation}
If \(u\in\Dom(\Ppoly(\Bop))\), then
\begin{equation}
    \mathcal E_P(u)
    =
    \frac12
    \inner{\Ppoly(\Bop)u}{u}_{\Hilb}.
    \label{eq:linear-energy-operator-form}
\end{equation}
\end{definition}

\begin{proposition}[Linear energy identity]
\label{prop:linear-energy-identity}
Let \(u_0\in\EnergySpace\cap\Sphere\), and let \(u\) be the solution given
by \Cref{thm:normalized-semigroup}.  Then, for every \(t\geq0\),
\begin{equation}
\label{eq:linear-energy-identity}
    \mathcal E_P(u(t))
    +
    \int_0^t
    \norm{u_s(s)}_{\Hilb}^2\,\dd s
    =
    \mathcal E_P(u_0).
\end{equation}
In particular,
\[
    t\longmapsto\mathcal E_P(u(t))
\]
is non-increasing.
\end{proposition}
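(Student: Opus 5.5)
The plan is to establish the identity first on \((0,\infty)\), where the trajectory is smooth by \Cref{cor:linear-fractional-regularity}, and then pass to the limit \(s\downarrow0\) using continuity in \(\EnergySpace\).

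\textbf{Identity for positive times.} Fix \(0<s<t\). On \([s,t]\) we have \(u\in C([s,t];\Dom(\Aop^\alpha))\) for every \(\alpha\); this follows by applying \Cref{cor:linear-fractional-regularity} to the datum \(u(s)\), or directly from the spectral formula. Hence \(u\in C^1([s,t];\Hilb)\) with values in \(\Dom(\Ppoly(\Bop))\), so \eqref{eq:linear-energy-operator-form} applies. Self-adjointness (\Cref{prop:domain-PB}) gives
\[
    \frac{\dd}{\dd r}\mathcal E_P(u(r))=\inner{\Ppoly(\Bop)u(r)}{u_r(r)}_{\Hilb}.
\]
To justify this derivative, I would write \(\mathcal E_P(u)=\tfrac12\sum_j \Ppoly(\lambda_j)|u_j|^2\). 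Since \(u(r)\) is a normalized semigroup orbit, its derivative coefficients are explicit, and the series can be differentiated termwise because of the uniform exponential damping on \([s,t]\).

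Next, \(u_r=-\Proj{u}\Ppoly(\Bop)u\). Because \(\Proj{u}\) is an orthogonal projection, \(u_r\) is tangent, and therefore
\[
    \inner{\Ppoly(\Bop)u}{u_r}=\inner{\Proj{u}\Ppoly(\Bop)u}{u_r}=-\norm{u_r}_{\Hilb}^2.
\]
Integrating from \(s\) to \(t\) yields \eqref{eq:linear-energy-identity} with \(0\) replaced by \(s\).

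\textbf{Limit \(s\downarrow0\).} This is the main obstacle. When \(u_0\in\EnergySpace\), \Cref{cor:linear-fractional-regularity} with \(\theta=1/2\) gives \(u\in C([0,\infty);\EnergySpace)\), and \(\mathcal E_P\) is continuous on \(\EnergySpace\). Hence \(\mathcal E_P(u(s))\to\mathcal E_P(u_0)\). Since \(\norm{u_r}^2\geq0\), monotone convergence then shows that \(\int_0^t\norm{u_r}^2\,\dd r\) is finite and equals \(\mathcal E_P(u_0)-\mathcal E_P(u(t))\). This proves the identity, and monotonicity of \(t\mapsto\mathcal E_P(u(t))\) follows immediately.

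As a cross-check on integrability near \(0\), one can bound
\[
    \norm{u_r}\le\norm{\Aop u}+\shift+|\inner{\Aop u}{u}|.
\]
Using only \(u_0\in\EnergySpace\), this gives \(\norm{\Aop e^{-r\Aop}u_0}\lesssim r^{-1/2}\norm{u_0}_{\EnergySpace}\), so the bound is of order \(r^{-1/2}\) and its square is of order \(r^{-1}\), which is not integrable. The crude bound therefore does not suffice. This is why the argument above relies on monotone convergence together with energy continuity rather than a direct estimate.
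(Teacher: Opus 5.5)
Your proposal is correct and follows essentially the same route as the paper. Both arguments derive \(\frac{\dd}{\dd r}\mathcal E_P(u(r))=-\norm{u_r}_{\Hilb}^2\) on \([s,t]\) with \(s>0\) from self-adjointness and tangency of \(u_r\), then let \(s\downarrow0\) using continuity of \(u\) in \(\EnergySpace\) (\Cref{cor:linear-fractional-regularity} with \(\theta=1/2\)) and monotone convergence for the nonnegative dissipation integral; your observation that the crude \(r^{-1/2}\) bound on \(\norm{u_r}_{\Hilb}\) cannot give integrability near \(0\) is accurate and explains why that limiting argument is needed.
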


\begin{proof}
Fix \(0<\varepsilon<t\).  By
\Cref{cor:linear-fractional-regularity},
\(u(s)\in\Dom(\Ppoly(\Bop))\) for every \(s\geq\varepsilon\), and the
trajectory is differentiable in \(\Hilb\).  Since
\[
    \norm{u(s)}_{\Hilb}=1,
\]
differentiation gives
\begin{equation}
    \inner{u(s)}{u_s(s)}_{\Hilb}=0.
    \label{eq:linear-tangency}
\end{equation}
Using \eqref{eq:linear-energy-operator-form} and self-adjointness of
\(\Ppoly(\Bop)\),
\[
    \frac{\dd}{\dd s}\mathcal E_P(u(s))
    =
    \inner{\Ppoly(\Bop)u(s)}{u_s(s)}_{\Hilb}.
\]
Equation \eqref{eq:linear-constrained-expanded} can be rewritten as
\[
    \Ppoly(\Bop)u
    =
    -u_t
    +
    \inner{\Ppoly(\Bop)u}{u}_{\Hilb}u.
\]
Consequently, by \eqref{eq:linear-tangency},
\[
\begin{aligned}
    \frac{\dd}{\dd s}\mathcal E_P(u(s))
    &=
    -\norm{u_s(s)}_{\Hilb}^2
    +
    \inner{\Ppoly(\Bop)u(s)}{u(s)}_{\Hilb}
    \inner{u(s)}{u_s(s)}_{\Hilb}
\\
    &=
    -\norm{u_s(s)}_{\Hilb}^2.
\end{aligned}
\]
Integration from \(\varepsilon\) to \(t\) yields
\begin{equation}
    \mathcal E_P(u(t))
    +
    \int_\varepsilon^t
    \norm{u_s(s)}_{\Hilb}^2\,\dd s
    =
    \mathcal E_P(u(\varepsilon)).
    \label{eq:linear-energy-epsilon}
\end{equation}

Because \(u_0\in\EnergySpace\), \Cref{cor:linear-fractional-regularity}
gives
\[
    u(\varepsilon)\longrightarrow u_0
    \quad\text{in }\EnergySpace
    \qquad
    \text{as }\varepsilon\downarrow0.
\]
Hence
\[
    \mathcal E_P(u(\varepsilon))
    \longrightarrow
    \mathcal E_P(u_0).
\]
Because the integrand is nonnegative and the intervals
\((\varepsilon,t)\) increase to \((0,t)\) as
\(\varepsilon\downarrow0\),
\[
    \int_\varepsilon^t\norm{u_s(s)}_{\Hilb}^2\,\dd s
    \uparrow
    \int_0^t\norm{u_s(s)}_{\Hilb}^2\,\dd s.
\]
Together with
\(\mathcal E_P(u(\varepsilon))\to\mathcal E_P(u_0)\), this yields
\eqref{eq:linear-energy-identity}.
\end{proof}

\subsection{Spectral support and stationary states}
\label{subsec:linear-spectral-support}

To describe the asymptotic dynamics, write
\begin{equation}
    p_j:=\Ppoly(\lambda_j),
    \qquad
    c_j:=\inner{u_0}{e_j}_{\Hilb}.
    \label{eq:pj-cj}
\end{equation}
The normalized semigroup formula becomes
\begin{equation}
\label{eq:linear-coefficient-formula}
    u(t)
    =
    \frac{
        \displaystyle
        \sum_{j=1}^{\infty}
        e^{-p_jt}c_je_j
    }{
        \displaystyle
        \left(
            \sum_{j=1}^{\infty}
            e^{-2p_jt}|c_j|^2
        \right)^{1/2}
    }.
\end{equation}
Thus each spectral component evolves only by multiplication by a nonzero
scalar.

\begin{definition}
\label{def:active-spectral-support}
For \(u_0\in\Sphere\), define its active polynomial spectral set by
\begin{equation}
    \Sigma_P(u_0)
    :=
    \left\{
        p_j:
        c_j\neq0
    \right\}.
    \label{eq:active-spectrum}
\end{equation}
Equivalently, if \(\mathsf E_\mu\) denotes the orthogonal projection onto
\[
    \Ker(\Ppoly(\Bop)-\mu I),
\]
then
\begin{equation}
    \Sigma_P(u_0)
    =
    \left\{
        \mu\in\spec(\Ppoly(\Bop)):
        \mathsf E_\mu u_0\neq0
    \right\}.
    \label{eq:active-spectrum-invariant}
\end{equation}
\end{definition}

\begin{proposition}[Conservation of spectral support]
\label{prop:spectral-support-conservation}
Let \(u\) be the solution of \eqref{eq:linear-constrained-flow}.  Then
\begin{equation}
    \inner{u(t)}{e_j}_{\Hilb}
    =
    \frac{
        e^{-p_jt}c_j
    }{
        \left(
            \sum_{k=1}^{\infty}
            e^{-2p_kt}|c_k|^2
        \right)^{1/2}
    },
    \qquad
    j\geq1,\quad t\geq0.
    \label{eq:coefficient-evolution}
\end{equation}
Consequently,
\begin{equation}
    \inner{u(t)}{e_j}_{\Hilb}=0
    \quad\Longleftrightarrow\quad
    c_j=0,
    \qquad
    t\geq0,
    \label{eq:support-conserved-basis}
\end{equation}
and
\begin{equation}
    \Sigma_P(u(t))
    =
    \Sigma_P(u_0),
    \qquad
    t\geq0.
    \label{eq:active-spectrum-conserved}
\end{equation}
More invariantly, for every
\(\mu\in\spec(\Ppoly(\Bop))\),
\begin{equation}
\label{eq:spectral-projection-evolution}
    \mathsf E_\mu u(t)
    =
    \frac{
        e^{-\mu t}
    }{
        \norm{S_P(t)u_0}_{\Hilb}
    }
    \mathsf E_\mu u_0.
\end{equation}
\end{proposition}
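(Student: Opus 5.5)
The plan is to read everything off the explicit representation of \Cref{thm:normalized-semigroup}. First, I would compute the spectral coefficients of \(S_P(t)u_0\). Since \(S_P(t)e_j=e^{-p_jt}e_j\) (as in the proof of \Cref{lem:polynomial-semigroup}) and \(S_P(t)\) is self-adjoint,
\[
    \inner{S_P(t)u_0}{e_j}_{\Hilb}
    =\inner{u_0}{S_P(t)e_j}_{\Hilb}
    =e^{-p_jt}c_j .
\]
Parseval then gives
\[
    \norm{S_P(t)u_0}_{\Hilb}^2=\sum_{k\geq1}e^{-2p_kt}|c_k|^2 .
\]
This sum is positive because \(u_0\neq0\) and every exponential factor is positive. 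Dividing by the norm yields \eqref{eq:coefficient-evolution}.

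Next I would derive the support statements. The factor \(e^{-p_jt}\) never vanishes and the denominator is a finite positive number, so \eqref{eq:support-conserved-basis} follows immediately. Applying \Cref{def:active-spectral-support} to the initial datum \(u(t)\) gives
\[
    \Sigma_P(u(t))=\{p_j:\inner{u(t)}{e_j}_{\Hilb}\neq0\}=\{p_j:c_j\neq0\}=\Sigma_P(u_0),
\]
which is \eqref{eq:active-spectrum-conserved}.

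For the invariant formula \eqref{eq:spectral-projection-evolution}, I would first identify the projection \(\mathsf E_\mu\) in the eigenbasis. Since \(\Ppoly(\Bop)\) acts diagonally on \(\{e_j\}\), one has \(\Ker(\Ppoly(\Bop)-\mu I)=\overline{\Span}\{e_j:p_j=\mu\}\). This requires a short check: expanding \((\Ppoly(\Bop)-\mu)v=0\) in coefficients forces \(v_j=0\) whenever \(p_j\neq\mu\). Hence
\[
    \mathsf E_\mu w=\sum_{j:\,p_j=\mu}\inner{w}{e_j}_{\Hilb}e_j .
\]
This sum is finite, because \(p_j\to\infty\) and \(\mu\) is in fact an eigenvalue. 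Applying this identification to \(w=u(t)\) and inserting \eqref{eq:coefficient-evolution}, every retained index has \(e^{-p_jt}=e^{-\mu t}\), so this factor comes out of the sum. What remains is \(\mathsf E_\mu u_0\) divided by \(\norm{S_P(t)u_0}_{\Hilb}\).

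I expect no real obstacle here. The only point requiring care is the identification of \(\mathsf E_\mu\) with the coordinate projection onto \(\{j:p_j=\mu\}\). In particular, one must make sure that the spectrum of \(\Ppoly(\Bop)\) consists exactly of the values \(p_j\). This holds because the \(p_j\) have no finite accumulation point, so the spectrum is pure point and equals \(\{p_j\}\). The same identification also justifies the equivalence of the two descriptions of \(\Sigma_P\) in \eqref{eq:active-spectrum-invariant}.
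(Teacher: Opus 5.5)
Your proposal is correct and follows essentially the same route as the paper: both arguments read the coefficient formula, the support conservation, and the projection identity directly off the normalized semigroup representation of \Cref{thm:normalized-semigroup}. The differences are minor. You deduce \(\Sigma_P(u(t))=\Sigma_P(u_0)\) from the coordinate formula rather than from the projection identity, and you make explicit the identification of \(\mathsf E_\mu\) with the coordinate projection onto \(\{j:p_j=\mu\}\), which the paper uses only implicitly.
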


\begin{proof}
Formula \eqref{eq:coefficient-evolution} follows directly from
\eqref{eq:linear-coefficient-formula}.  Its denominator is strictly positive,
and \(e^{-p_jt}>0\) for every finite \(t\).  Hence the \(j\)-th coefficient
vanishes at time \(t\) if and only if it vanishes initially, proving
\eqref{eq:support-conserved-basis}.

Since the eigenspaces of the self-adjoint operator \(\Ppoly(\Bop)\) are
orthogonal, applying \(\mathsf E_\mu\) to the normalized semigroup formula
gives
\[
\begin{aligned}
    \mathsf E_\mu u(t)
    &=
    \frac{
        \mathsf E_\mu e^{-t\Ppoly(\Bop)}u_0
    }{
        \norm{S_P(t)u_0}_{\Hilb}
    }
\\
    &=
    \frac{
        e^{-\mu t}\mathsf E_\mu u_0
    }{
        \norm{S_P(t)u_0}_{\Hilb}
    },
\end{aligned}
\]
which is \eqref{eq:spectral-projection-evolution}.  The scalar factor in this
identity is nonzero, so
\[
    \mathsf E_\mu u(t)\neq0
    \quad\Longleftrightarrow\quad
    \mathsf E_\mu u_0\neq0.
\]
This proves \eqref{eq:active-spectrum-conserved}.
\end{proof}

The stationary states of the constrained linear flow are precisely the unit
vectors contained in eigenspaces of \(\Ppoly(\Bop)\).

\begin{proposition}
\label{prop:linear-equilibria}
Let \(\phi\in\Dom(\Ppoly(\Bop))\cap\Sphere\).  Then the following statements
are equivalent:
\begin{equation}
    \Proj{\phi}\Ppoly(\Bop)\phi=0,
    \label{eq:linear-equilibrium-projected}
\end{equation}
and
\begin{equation}
    \Ppoly(\Bop)\phi=\mu\phi
    \quad\text{for some }\mu\in\RR.
    \label{eq:linear-equilibrium-eigenvalue}
\end{equation}
In this case,
\[
    \mu
    =
    \inner{\Ppoly(\Bop)\phi}{\phi}_{\Hilb}.
\]
Consequently, the equilibrium set is
\begin{equation}
\label{eq:linear-equilibrium-set}
    \EqSetLin
    =
    \bigcup_{\mu\in\spec(\Ppoly(\Bop))}
    \left(
        \Ker(\Ppoly(\Bop)-\mu I)
        \cap\Sphere
    \right).
\end{equation}
\end{proposition}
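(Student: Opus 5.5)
The plan is to prove the two implications directly from the definition \eqref{eq:tangent-projection} of the tangent projection. Then we identify the equilibrium set using the spectral representation \eqref{eq:P-B-definition}.

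\emph{First implication.} Assume \eqref{eq:linear-equilibrium-projected}. Expanding \(\Proj{\phi}\) gives
\[
    \Ppoly(\Bop)\phi-\inner{\Ppoly(\Bop)\phi}{\phi}_{\Hilb}\phi=0.
\]
This is \eqref{eq:linear-equilibrium-eigenvalue} with \(\mu:=\inner{\Ppoly(\Bop)\phi}{\phi}_{\Hilb}\), which is real because \(\Hilb\) is a real Hilbert space; self-adjointness from \Cref{prop:domain-PB} also gives this.

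\emph{Second implication.} Assume \(\Ppoly(\Bop)\phi=\mu\phi\). Pairing with \(\phi\) and using \(\norm{\phi}_{\Hilb}=1\) gives \(\inner{\Ppoly(\Bop)\phi}{\phi}_{\Hilb}=\mu\). This establishes the formula for \(\mu\). It also gives
\[
    \Proj{\phi}\Ppoly(\Bop)\phi=\mu\phi-\mu\phi=0,
\]
which is \eqref{eq:linear-equilibrium-projected}.

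\emph{Equilibrium set.} By the equivalence, the equilibria are exactly the unit vectors \(\phi\) with \(\phi\in\Ker(\Ppoly(\Bop)-\mu I)\) for some real \(\mu\). It remains to check that only \(\mu\in\spec(\Ppoly(\Bop))\) can occur. Expand \(\phi=\sum_j\phi_je_j\). Then \(\Ppoly(\Bop)\phi=\mu\phi\) means \((p_j-\mu)\phi_j=0\) for every \(j\). Since \(\phi\neq0\), some \(\phi_j\neq0\), which forces \(\mu=p_j\). By \Cref{lem:shift-comparison}, \(\Ppoly(\Bop)\) has compact resolvent after the shift, so its spectrum is exactly \(\{p_j:j\geq1\}\). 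Hence \(\mu\) lies in \(\spec(\Ppoly(\Bop))\).

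Conversely, each kernel intersected with \(\Sphere\) consists of eigenvectors, so it lies in the equilibrium set. Together these give \eqref{eq:linear-equilibrium-set}.

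Every step is elementary. The only point needing care is identifying the spectrum with the set of eigenvalues \(\{p_j\}\), and that follows from the diagonal representation together with compactness of \(\Aop^{-1}\).
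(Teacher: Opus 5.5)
Your proof is correct and follows the paper's argument: you expand the tangent projection to get the eigenvalue equation with \(\mu=\inner{\Ppoly(\Bop)\phi}{\phi}_{\Hilb}\), and for the converse you use \(\norm{\phi}_{\Hilb}=1\). Your extra check that \(\mu\in\spec(\Ppoly(\Bop))\), which the paper leaves implicit, is valid but needs no compact-resolvent argument: a nonzero \(\phi\) with \(\Ppoly(\Bop)\phi=\mu\phi\) makes \(\Ppoly(\Bop)-\mu I\) non-injective, so \(\mu\) lies in the spectrum.
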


\begin{proof}
If \eqref{eq:linear-equilibrium-projected} holds, then by the definition of
the tangent projection,
\[
    \Ppoly(\Bop)\phi
    -
    \inner{\Ppoly(\Bop)\phi}{\phi}_{\Hilb}\phi
    =
    0.
\]
Thus \eqref{eq:linear-equilibrium-eigenvalue} holds with
\[
    \mu
    =
    \inner{\Ppoly(\Bop)\phi}{\phi}_{\Hilb}.
\]
Conversely, if
\[
    \Ppoly(\Bop)\phi=\mu\phi,
\]
then, since \(\norm{\phi}_{\Hilb}=1\),
\[
\begin{aligned}
    \Proj{\phi}\Ppoly(\Bop)\phi
    &=
    \mu\phi
    -
    \inner{\mu\phi}{\phi}_{\Hilb}\phi
\\
    &=
    \mu\phi-\mu\phi
    =
    0.
\end{aligned}
\]
This proves the equivalence and the characterization
\eqref{eq:linear-equilibrium-set}.
\end{proof}

\subsection{Spectral selection}
\label{subsec:spectral-selection-theorem}

Since
\[
    p_j=\Ppoly(\lambda_j)\longrightarrow+\infty,
\]
the active spectral set \(\Sigma_P(u_0)\) possesses a smallest element.
This value determines the asymptotic state.

\begin{definition}
\label{def:selected-spectral-value}
For \(u_0\in\Sphere\), define
\begin{equation}
\label{eq:mu-star}
    \mu_*(u_0)
    :=
    \min\Sigma_P(u_0).
\end{equation}
Let
\[
    \mathsf E_*
    :=
    \mathsf E_{\mu_*(u_0)}
\]
be the orthogonal projection onto
\begin{equation}
    \mathcal H_*
    :=
    \Ker\bigl(
        \Ppoly(\Bop)-\mu_*(u_0)I
    \bigr),
    \label{eq:selected-eigenspace}
\end{equation}
and set
\begin{equation}
    w_*:=\mathsf E_*u_0.
    \label{eq:w-star}
\end{equation}
Then \(w_*\neq0\).
\end{definition}

\begin{remark}
\label{rem:selected-eigenspace-finite}
The eigenspace \(\mathcal H_*\) is finite-dimensional.  By
\Cref{lem:dirichlet-spectral}, the Dirichlet spectrum has no finite
accumulation point, and
\[
    \Ppoly(\lambda_j)\longrightarrow+\infty.
\]
Hence only finitely many indices \(j\) satisfy
\(\Ppoly(\lambda_j)=\mu_*(u_0)\).  Equivalently, the equation
\[
    \Ppoly(\lambda)=\mu_*(u_0)
\]
has only finitely many real roots, and each corresponding Dirichlet
eigenspace has finite multiplicity.
\end{remark}

\begin{theorem}[Spectral selection]
\label{thm:linear-spectral-selection}
Let \(u_0\in\Sphere\), and let \(u\) be the solution of
\eqref{eq:linear-constrained-flow}.  Then
\begin{equation}
\label{eq:linear-limit}
    u(t)
    \longrightarrow
    u_\infty
    :=
    \frac{\mathsf E_*u_0}
         {\norm{\mathsf E_*u_0}_{\Hilb}}
    \quad\text{strongly in }\Hilb
    \qquad
    \text{as }t\to\infty.
\end{equation}
The limit \(u_\infty\) belongs to
\[
    \mathcal H_*\cap\Sphere
\]
and is therefore an equilibrium of the constrained linear flow.

If
\[
    \Sigma_P(u_0)\setminus\{\mu_*(u_0)\}\neq\varnothing,
\]
define the active spectral gap
\begin{equation}
\label{eq:active-spectral-gap}
    \delta_*(u_0)
    :=
    \min
    \left\{
        \mu-\mu_*(u_0):
        \mu\in\Sigma_P(u_0),\
        \mu>\mu_*(u_0)
    \right\}.
\end{equation}
Then
\[
    \delta_*(u_0)>0
\]
and there exists a constant \(C(u_0)>0\) such that
\begin{equation}
\label{eq:H-spectral-selection-rate}
    \norm{u(t)-u_\infty}_{\Hilb}
    \leq
    C(u_0)e^{-\delta_*(u_0)t},
    \qquad
    t\geq0.
\end{equation}
If
\[
    \Sigma_P(u_0)=\{\mu_*(u_0)\},
\]
then \(u(t)=u_0\) for every \(t\geq0\).

More generally, if
\[
    u_0\in\Dom(\Aop^\theta)\cap\Sphere
\]
for some \(\theta\geq0\), then
\begin{equation}
\label{eq:fractional-spectral-convergence}
    u(t)\longrightarrow u_\infty
    \quad\text{in }\Dom(\Aop^\theta).
\end{equation}
If the active spectral gap is defined, then for each fixed \(\theta\geq0\)
there exists a constant \(C_{\theta}(u_0)>0\), depending on \(\theta\) and
\(u_0\), such that
\begin{equation}
\label{eq:fractional-spectral-rate}
    \norm{
        \Aop^\theta(u(t)-u_\infty)
    }_{\Hilb}
    \leq
    C_{\theta}(u_0)
    e^{-\delta_*(u_0)t},
    \qquad
    t\geq0.
\end{equation}
More explicitly, with
\[
    w_*:=\mathsf E_*u_0,
    \qquad
    w_\perp:=(I-\mathsf E_*)u_0,
    \qquad
    a:=\norm{w_*}_{\Hilb},
\]
one may take
\begin{equation}
\label{eq:explicit-fractional-rate-constant}
    C_\theta(u_0)
    =
    \frac{\norm{\Aop^\theta w_\perp}_{\Hilb}}{a}
    +
    \frac{
        \norm{\Aop^\theta w_*}_{\Hilb}
        \norm{w_\perp}_{\Hilb}^2
    }{2a^3}.
\end{equation}
\end{theorem}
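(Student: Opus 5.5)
Factor out the smallest active exponential in the normalized semigroup formula of \Cref{thm:normalized-semigroup}. Write $\mu_*=\mu_*(u_0)$ and decompose $u_0=w_*+w_\perp$ with $w_*=\mathsf E_*u_0\neq0$ and $w_\perp=(I-\mathsf E_*)u_0$. Every active index in $w_\perp$ has $p_j>\mu_*$, by the definition of $\mu_*$ as the minimum of $\Sigma_P(u_0)$. Multiply numerator and denominator of \eqref{eq:linear-coefficient-formula} by $e^{\mu_* t}$ and set $r(t):=e^{\mu_*t}S_P(t)w_\perp=\sum_{j}e^{-(p_j-\mu_*)t}c_je_j$, the sum running over active $j$ outside $\mathcal H_*$. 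Since $S_P(t)w_*=e^{-\mu_*t}w_*$ by \eqref{eq:spectral-projection-evolution}, and $w_*\perp r(t)$, we get
\[
  u(t)=\frac{w_*+r(t)}{\bigl(a^2+\norm{r(t)}_{\Hilb}^2\bigr)^{1/2}},\qquad a=\norm{w_*}_{\Hilb}.
\]

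For the gap, note that active values lie in $\{p_j\}$, and only finitely many $p_j$ lie below any bound because $p_j\to\infty$. So the set in \eqref{eq:active-spectral-gap} has a minimum, and $\delta_*>0$. Then $|e^{-(p_j-\mu_*)t}|\le e^{-\delta_*t}$ for every active $j$ off $\mathcal H_*$, which gives $\norm{\Aop^\theta r(t)}_{\Hilb}\le e^{-\delta_* t}\norm{\Aop^\theta w_\perp}_{\Hilb}$ whenever the right side is finite (the spectral multipliers commute). If $\Sigma_P(u_0)=\{\mu_*\}$, then $r\equiv0$ and $u(t)=w_*/a=u_0$. Otherwise, dominated convergence alone gives $r(t)\to0$ in $\Dom(\Aop^\theta)$, which yields \eqref{eq:linear-limit} and \eqref{eq:fractional-spectral-convergence}. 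The limit is an equilibrium by \Cref{prop:linear-equilibria}.

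For the rate, write $N(t)=(a^2+\norm{r}^2)^{1/2}$ and split
\[
  u(t)-u_\infty=\frac{r(t)}{N(t)}+w_*\Bigl(\frac1{N(t)}-\frac1a\Bigr).
\]
The first term is bounded in $\Dom(\Aop^\theta)$ by $\norm{\Aop^\theta w_\perp}e^{-\delta_*t}/a$, since $N\ge a$. For the second, use $0\le \frac1a-\frac1N=\frac{N^2-a^2}{aN(N+a)}\le\frac{\norm{r}^2}{2a^3}$, together with $\norm{r(t)}^2\le e^{-2\delta_*t}\norm{w_\perp}^2\le e^{-\delta_*t}\norm{w_\perp}^2$. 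This produces exactly the constant \eqref{eq:explicit-fractional-rate-constant}, and $\theta=0$ gives \eqref{eq:H-spectral-selection-rate}. Here $\norm{\Aop^\theta w_*}$ is always finite since $\mathcal H_*$ is finite-dimensional (\Cref{rem:selected-eigenspace-finite}).

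The delicate step is making sense of $\norm{\Aop^\theta w_\perp}$ when $u_0$ has no fractional regularity. As stated, \eqref{eq:fractional-spectral-rate} presupposes $u_0\in\Dom(\Aop^\theta)$, and the argument above is then complete. For the abstract's positive-time version, I would apply the same computation with initial time $\tau$, that is, with $u(\tau)$ in place of $u_0$, which lies in every $\Dom(\Aop^\theta)$ by \Cref{cor:linear-fractional-regularity}. Alternatively, for $t\ge\tau$ I would bound $q_j^\theta e^{-(p_j-\mu_*)t}\le e^{-\delta_*(t-\tau)}q_j^\theta e^{-(p_j-\mu_*)\tau}$ and use the uniform boundedness of the last factor, in the spirit of \eqref{eq:semigroup-smoothing}.
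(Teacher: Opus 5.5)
Your proposal is correct and follows the paper's proof essentially step for step. It uses the same factorization $S_P(t)u_0=e^{-\mu_*t}(w_*+r(t))$ with $r(t)\perp w_*$, the same Parseval bound $\norm{\Aop^\theta r(t)}_{\Hilb}\le e^{-\delta_*t}\norm{\Aop^\theta w_\perp}_{\Hilb}$, and the same splitting of $u(t)-u_\infty$ with $\bigl|\frac1{N}-\frac1a\bigr|\le\norm{r(t)}_{\Hilb}^2/(2a^3)$, which yields exactly \eqref{eq:explicit-fractional-rate-constant}. Your closing paragraph on rough data goes beyond this theorem; the paper treats it separately in \Cref{cor:rough-data-fractional-rate}. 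Both routes you sketch there are valid: restarting the estimate at $u(\tau)$, or bounding by $e^{\delta_*\tau}\sup_j q_j^\theta e^{-(p_j-\mu_*)\tau}$. Either one avoids the paper's splitting of $r$ into $r_*$ and $r_>$.
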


\begin{proof}
Set
\[
    \mu_*:=\mu_*(u_0),
    \qquad
    w_*:=\mathsf E_*u_0,
    \qquad
    a:=\norm{w_*}_{\Hilb}>0.
\]
Decompose
\[
    u_0=w_*+w_\perp,
    \qquad
    w_\perp\perp\mathcal H_*.
\]
Using the spectral representation of \(S_P(t)\), we may factor out
\(e^{-\mu_*t}\) and write
\begin{equation}
    S_P(t)u_0
    =
    e^{-\mu_*t}
    \bigl(
        w_*+r(t)
    \bigr),
    \label{eq:SP-decomposition}
\end{equation}
where
\begin{equation}
    r(t)
    :=
    \sum_{\substack{j\geq1\\ p_j>\mu_*}}
    e^{-t(p_j-\mu_*)}c_je_j.
    \label{eq:r-definition}
\end{equation}
Terms with \(p_j<\mu_*\) do not occur because the definition of \(\mu_*\)
implies \(c_j=0\) for such indices, while terms satisfying
\(p_j=\mu_*\) are precisely the components of \(w_*\).  Hence
\begin{equation}
    r(t)\perp w_*.
    \label{eq:r-orthogonal}
\end{equation}

Suppose first that
\[
    \Sigma_P(u_0)\setminus\{\mu_*\}\neq\varnothing.
\]
Since the values \(p_j\) tend to \(+\infty\), the set of active values
strictly larger than \(\mu_*\) has a smallest member.  Therefore
\(\delta_*:=\delta_*(u_0)>0\).  By Parseval's identity,
\[
\begin{aligned}
    \norm{r(t)}_{\Hilb}^2
    &=
    \sum_{\substack{j\geq1\\p_j>\mu_*}}
    e^{-2t(p_j-\mu_*)}|c_j|^2
\\
    &\leq
    e^{-2\delta_*t}
    \sum_{\substack{j\geq1\\p_j>\mu_*}}
    |c_j|^2
\\
    &=
    e^{-2\delta_*t}
    \norm{w_\perp}_{\Hilb}^2.
\end{aligned}
\]
Thus
\begin{equation}
\label{eq:r-H-rate}
    \norm{r(t)}_{\Hilb}
    \leq
    e^{-\delta_*t}
    \norm{w_\perp}_{\Hilb}.
\end{equation}

By \eqref{eq:r-orthogonal},
\[
    \norm{w_*+r(t)}_{\Hilb}^2
    =
    a^2+\norm{r(t)}_{\Hilb}^2.
\]
The normalized representation therefore gives
\begin{equation}
\label{eq:u-w-r}
    u(t)
    =
    \frac{
        w_*+r(t)
    }{
        \bigl(
            a^2+\norm{r(t)}_{\Hilb}^2
        \bigr)^{1/2}
    }.
\end{equation}
Since \(r(t)\to0\) in \(\Hilb\), it follows immediately that
\[
    u(t)\longrightarrow\frac{w_*}{a}=u_\infty
    \quad\text{in }\Hilb.
\]

For the quantitative estimate, set
\[
    D(t)
    :=
    \bigl(
        a^2+\norm{r(t)}_{\Hilb}^2
    \bigr)^{1/2}.
\]
Then \(D(t)\geq a\), and
\[
\begin{aligned}
    \norm{u(t)-u_\infty}_{\Hilb}
    &\leq
    \norm{
        \frac{r(t)}{D(t)}
    }_{\Hilb}
    +
    \norm{
        \left(
            \frac1{D(t)}-\frac1a
        \right)w_*
    }_{\Hilb}
\\
    &\leq
    \frac{\norm{r(t)}_{\Hilb}}{a}
    +
    \frac{D(t)-a}{D(t)}.
\end{aligned}
\]
Since
\[
    D(t)-a
    =
    \frac{
        \norm{r(t)}_{\Hilb}^2
    }{
        D(t)+a
    },
\]
we obtain
\[
    \frac{D(t)-a}{D(t)}
    \leq
    \frac{
        \norm{r(t)}_{\Hilb}^2
    }{
        2a^2
    }.
\]
Using \eqref{eq:r-H-rate},
\[
\begin{aligned}
    \norm{u(t)-u_\infty}_{\Hilb}
    &\leq
    \frac{
        \norm{w_\perp}_{\Hilb}
    }{a}
    e^{-\delta_*t}
\\
    &\quad+
    \frac{
        \norm{w_\perp}_{\Hilb}^2
    }{
        2a^2
    }
    e^{-2\delta_*t}.
\end{aligned}
\]
Since \(e^{-2\delta_*t}\leq e^{-\delta_*t}\), this proves
\eqref{eq:H-spectral-selection-rate}.

If
\[
    \Sigma_P(u_0)=\{\mu_*\},
\]
then \(u_0\in\mathcal H_*\) and
\[
    S_P(t)u_0=e^{-\mu_*t}u_0.
\]
Normalization gives
\[
    u(t)=u_0,
\]
so the trajectory is stationary.

Assume now that
\[
    u_0\in\Dom(\Aop^\theta)
\]
for some \(\theta\geq0\).  Since the spectral projection
\(\mathsf E_*\) commutes with \(\Aop\),
\[
    w_*,w_\perp\in\Dom(\Aop^\theta).
\]
Furthermore,
\[
\begin{aligned}
    \norm{\Aop^\theta r(t)}_{\Hilb}^2
    &=
    \sum_{\substack{j\geq1\\p_j>\mu_*}}
    q_j^{2\theta}
    e^{-2t(p_j-\mu_*)}
    |c_j|^2
\\
    &\leq
    e^{-2\delta_*t}
    \sum_{\substack{j\geq1\\p_j>\mu_*}}
    q_j^{2\theta}|c_j|^2
\\
    &\leq
    e^{-2\delta_*t}
    \norm{\Aop^\theta w_\perp}_{\Hilb}^2.
\end{aligned}
\]
Thus
\begin{equation}
\label{eq:r-fractional-rate}
    \norm{\Aop^\theta r(t)}_{\Hilb}
    \leq
    e^{-\delta_*t}
    \norm{\Aop^\theta w_\perp}_{\Hilb}.
\end{equation}
Since
\[
    u(t)-u_\infty
    =
    \frac{r(t)}{D(t)}
    +
    \left(
        \frac1{D(t)}-\frac1a
    \right)w_*,
\]
application of \(\Aop^\theta\) gives
\[
\begin{aligned}
    \norm{
        \Aop^\theta(u(t)-u_\infty)
    }_{\Hilb}
    &\leq
    \frac{
        \norm{\Aop^\theta r(t)}_{\Hilb}
    }{a}
\\
    &\quad+
    \left|
        \frac1{D(t)}-\frac1a
    \right|
    \norm{\Aop^\theta w_*}_{\Hilb}.
\end{aligned}
\]
By \eqref{eq:r-fractional-rate}, the first term is bounded by
\[
    \frac{\norm{\Aop^\theta w_\perp}_{\Hilb}}{a}
    e^{-\delta_*t}.
\]
Moreover,
\[
    \left|\frac1{D(t)}-\frac1a\right|
    =
    \frac{\norm{r(t)}_{\Hilb}^2}{aD(t)(D(t)+a)}
    \leq
    \frac{\norm{r(t)}_{\Hilb}^2}{2a^3},
\]
and \eqref{eq:r-H-rate} therefore gives
\[
    \left|\frac1{D(t)}-\frac1a\right|
    \norm{\Aop^\theta w_*}_{\Hilb}
    \leq
    \frac{
        \norm{\Aop^\theta w_*}_{\Hilb}
        \norm{w_\perp}_{\Hilb}^2
    }{2a^3}
    e^{-2\delta_*t}.
\]
Since \(e^{-2\delta_*t}\leq e^{-\delta_*t}\), this proves
\eqref{eq:fractional-spectral-rate} with the constant in
\eqref{eq:explicit-fractional-rate-constant}, and hence
\eqref{eq:fractional-spectral-convergence}.
\end{proof}

\begin{corollary}[Positive-time fractional convergence for rough data]
\label{cor:rough-data-fractional-rate}
Let \(u_0\in\Sphere\), let \(\theta\geq0\), and let \(\tau>0\).  Then
\[
    u(t)\in\Dom(\Aop^\theta),
    \qquad t>0,
\]
and
\begin{equation}
\label{eq:rough-data-fractional-limit}
    u(t)\longrightarrow u_\infty
    \quad\text{in }\Dom(\Aop^\theta)
    \qquad\text{as }t\to\infty.
\end{equation}
If the active spectral gap \(\delta_*(u_0)\) is defined, then there exists
\(C_{\theta,\tau}(u_0)>0\) such that
\begin{equation}
\label{eq:rough-data-fractional-rate}
    \norm{\Aop^\theta(u(t)-u_\infty)}_{\Hilb}
    \leq
    C_{\theta,\tau}(u_0)e^{-\delta_*(u_0)t},
    \qquad t\geq\tau.
\end{equation}
Thus no fractional regularity of the initial datum is required for the
positive-time rate.  For every fixed \(\tau>0\), the constant
\(C_{\theta,\tau}(u_0)\) is finite; no bound uniform as
\(\tau\downarrow0\) is asserted.
\end{corollary}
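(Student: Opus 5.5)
The plan is to restart the flow at time \(\tau/2\) (or any fixed positive time) and apply \Cref{thm:linear-spectral-selection} to the restarted datum, which is already smooth by \Cref{cor:linear-fractional-regularity}. Membership \(u(t)\in\Dom(\Aop^\theta)\) for \(t>0\) is exactly \eqref{eq:linear-instantaneous-smoothing}, so only the convergence and the rate need work.

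Set \(s:=\tau/2\) and \(\tilde u_0:=u(s)\in\Sphere\). By the semigroup property and the normalized formula \eqref{eq:normalized-semigroup-formula}, the normalizing scalars cancel and
\[
    u(s+t)=\frac{S_P(t)\tilde u_0}{\norm{S_P(t)\tilde u_0}_{\Hilb}},\qquad t\geq0,
\]
so by uniqueness in \Cref{thm:normalized-semigroup} the map \(t\mapsto u(s+t)\) is the solution starting from \(\tilde u_0\). Next I would check that the selection data are unchanged. By \Cref{prop:spectral-support-conservation}, \(\Sigma_P(\tilde u_0)=\Sigma_P(u_0)\), hence \(\mu_*\), \(\mathsf E_*\) and \(\delta_*\) coincide for the two data. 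Moreover, \eqref{eq:spectral-projection-evolution} gives \(\mathsf E_*\tilde u_0=e^{-\mu_* s}\norm{S_P(s)u_0}_{\Hilb}^{-1}\mathsf E_*u_0\), a positive multiple of \(\mathsf E_*u_0\). The two normalized projections therefore agree, so the limit \(\tilde u_\infty\) equals \(u_\infty\). This identification of the limit is the one point requiring care: the positivity of the scalar is what guarantees the restarted limit is the same vector rather than its negative.

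Since \(\tilde u_0\in\Dom(\Aop^\theta)\), \eqref{eq:fractional-spectral-convergence} gives convergence in \(\Dom(\Aop^\theta)\), which is \eqref{eq:rough-data-fractional-limit}. When the gap is defined, \eqref{eq:fractional-spectral-rate} gives, for \(t\geq s\),
\[
    \norm{\Aop^\theta(u(t)-u_\infty)}_{\Hilb}\leq C_\theta(\tilde u_0)e^{-\delta_*(t-s)}
    =C_\theta(\tilde u_0)e^{\delta_* s}e^{-\delta_* t}.
\]
We may therefore take \(C_{\theta,\tau}(u_0):=C_\theta(u(\tau/2))e^{\delta_*\tau/2}\), and the estimate holds for all \(t\geq\tau\) and in fact on \([\tau/2,\infty)\).

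It remains to show this constant is finite. By \eqref{eq:explicit-fractional-rate-constant}, this reduces to finiteness of \(\norm{\Aop^\theta \tilde w_*}_{\Hilb}\) and \(\norm{\Aop^\theta\tilde w_\perp}_{\Hilb}\), together with \(\tilde a>0\). Both norms are bounded by \(\norm{\Aop^\theta u(\tau/2)}_{\Hilb}\), which is at most \(C_\theta(\tau/2)^{-\theta}\norm{S_P(\tau/2)u_0}_{\Hilb}^{-1}e^{\shift\tau/2}\) by \eqref{eq:semigroup-smoothing} and \eqref{eq:SP-shift-relation}. The factor \(\tau^{-\theta}\) explains why no bound uniform as \(\tau\downarrow0\) is claimed. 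Finally, \(\tilde a>0\) holds because \(\mathsf E_*\tilde u_0\) is a positive multiple of \(w_*\neq0\).
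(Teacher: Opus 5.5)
Your proof is correct, but it takes a different route from the paper's.

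\textbf{Your route.} You restart the flow at $s=\tau/2$. The identity $u(s+t)=S_P(t)u(s)/\norm{S_P(t)u(s)}_{\Hilb}$, together with instantaneous smoothing (\Cref{cor:linear-fractional-regularity}), puts the new datum in $\Dom(\Aop^\theta)$. You then apply the regular-data part of \Cref{thm:linear-spectral-selection}. The only new work is checking that $\mu_*$, $\mathsf E_*$, $\delta_*$ and $u_\infty$ are unchanged under the restart. You get this correctly from support conservation and the positivity of the scalar in \eqref{eq:spectral-projection-evolution}.

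\textbf{The paper's route.} The paper stays with the original datum and estimates the remainder $r(t)$ mode by mode. It splits off the finitely many active modes with $p_j-\mu_*=\delta_*$. It controls the remaining modes for $t\geq\tau$ through $M_{\theta,\tau}=\sup q_j^\theta e^{-\tau(\gamma_j-\delta_*)}$. This supremum is finite because $q_j=\gamma_j+\mu_*+\shift$ grows only linearly while the exponential decays.

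\textbf{Comparison.} Your argument proves nothing new by hand. It reuses the explicit constant \eqref{eq:explicit-fractional-rate-constant} and reads off the $\tau$-dependence from \eqref{eq:semigroup-smoothing}. It also gives the estimate on all of $[\tau/2,\infty)$. The price is the extra step of re-identifying the selection data and the limit for the restarted datum. The paper's computation is self-contained and needs no such identification. It expresses the constant directly through the coefficients $c_j$ of $u_0$; both constants blow up comparably as $\tau\downarrow0$.

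A minor point: you do not need the uniqueness part of \Cref{thm:normalized-semigroup}. The formula for $u(s+t)$ follows directly from the semigroup law, and \Cref{thm:linear-spectral-selection} is stated for the normalized semigroup orbit.
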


\begin{proof}
Instantaneous membership in \(\Dom(\Aop^\theta)\) follows from the analytic
semigroup representation in \Cref{thm:normalized-semigroup}.  It remains to
prove the rate.  Use the notation in the proof of
\Cref{thm:linear-spectral-selection} and set
\[
    \delta_*:=\delta_*(u_0),
    \qquad
    \gamma_j:=p_j-\mu_*>0
\]
for the active indices outside the selected space.  The set
\[
    J_*:=\{j:c_j\neq0,\ \gamma_j=\delta_*\}
\]
is finite because \(p_j\to\infty\).  Split
\[
    r(t)=r_*(t)+r_>(t),
\]
where \(r_*\) contains the indices in \(J_*\) and \(r_>\) the remaining
active indices with \(\gamma_j>\delta_*\).  Since \(J_*\) is finite,
\[
    \norm{\Aop^\theta r_*(t)}_{\Hilb}
    \leq
    C_*(u_0,\theta)e^{-\delta_*t}.
\]
For \(t\geq\tau\),
\[
\begin{aligned}
    \norm{\Aop^\theta r_>(t)}_{\Hilb}^2
    &=
    e^{-2\delta_*t}
    \sum_{\substack{j:\ c_j\neq0\\ \gamma_j>\delta_*}}
    q_j^{2\theta}
    e^{-2t(\gamma_j-\delta_*)}|c_j|^2
\\
    &\leq
    e^{-2\delta_*t}
    M_{\theta,\tau}^2
    \sum_{j=1}^\infty |c_j|^2,
\end{aligned}
\]
where
\[
    M_{\theta,\tau}
    :=
    \sup_{\substack{j:\ c_j\neq0\\ \gamma_j>\delta_*}}
    q_j^\theta e^{-\tau(\gamma_j-\delta_*)}.
\]
The supremum is finite.  Indeed,
\(q_j=p_j+\shift\) and \(\gamma_j=p_j-\mu_*\), so
\(\gamma_j-\delta_*\to\infty\) and
\[
    q_j^\theta e^{-\tau(\gamma_j-\delta_*)}\longrightarrow0
    \qquad\text{as }j\to\infty.
\]
Thus
\[
    \norm{\Aop^\theta r(t)}_{\Hilb}
    \leq
    C_{\theta,\tau}'(u_0)e^{-\delta_*t},
    \qquad t\geq\tau.
\]
The selected vector \(w_*\) belongs to every fractional domain because
\(\mathcal H_*\) is finite-dimensional.  The decomposition
\[
    u(t)-u_\infty
    =
    \frac{r(t)}{D(t)}
    +
    \left(\frac1{D(t)}-\frac1a\right)w_*
\]
from the proof of \Cref{thm:linear-spectral-selection} uses the same scalar
\(D(t)\), independently of \(\theta\).  Since \(D(t)\geq a>0\) and
\[
    \left|\frac1{D(t)}-\frac1a\right|
    \leq
    \frac{\norm{r(t)}_{\Hilb}^2}{2a^3},
\]
the second term decays at least like \(e^{-2\delta_*t}\) in every
fractional norm.  This proves \eqref{eq:rough-data-fractional-rate} and
\eqref{eq:rough-data-fractional-limit}.
\end{proof}

The active gap depends on both the polynomial and the spectral support of the
initial datum.  It is positive for each fixed nonstationary datum, but it need
not stay uniformly positive when the datum or the polynomial varies.

We record separately the simple and degenerate cases.

\begin{corollary}[Simple selected mode]
\label{cor:simple-selected-mode}
Assume that
\[
    \dim\mathcal H_*=1.
\]
Let \(e_*\) be a unit vector spanning \(\mathcal H_*\).  Then
\[
    \inner{u_0}{e_*}_{\Hilb}\neq0
\]
and
\begin{equation}
\label{eq:simple-selected-limit}
    u(t)
    \longrightarrow
    \operatorname{sgn}
    \bigl(
        \inner{u_0}{e_*}_{\Hilb}
    \bigr)e_*
    \quad\text{in }\Hilb.
\end{equation}
If \(u_0\in\Dom(\Aop^\theta)\), the same convergence holds in
\(\Dom(\Aop^\theta)\).
\end{corollary}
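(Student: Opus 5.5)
This corollary is a direct specialization of \Cref{thm:linear-spectral-selection}, so the plan is to compute the selected projection explicitly when \(\mathcal H_*\) is one-dimensional.

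Since \(\mathcal H_*=\Span\{e_*\}\) with \(\norm{e_*}_{\Hilb}=1\), the orthogonal projection is
\[
    \mathsf E_*v=\inner{v}{e_*}_{\Hilb}e_*,
    \qquad v\in\Hilb .
\]
By \Cref{def:selected-spectral-value}, \(w_*=\mathsf E_*u_0\neq0\), because \(\mu_*(u_0)\in\Sigma_P(u_0)\) and so, by \eqref{eq:active-spectrum-invariant}, \(\mathsf E_{\mu_*}u_0\neq0\). Hence \(\inner{u_0}{e_*}_{\Hilb}\neq0\), which is the first claim.

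Next I would compute the limit. We have \(\norm{\mathsf E_*u_0}_{\Hilb}=|\inner{u_0}{e_*}_{\Hilb}|\), and therefore
\[
    u_\infty
    =\frac{\inner{u_0}{e_*}_{\Hilb}}{|\inner{u_0}{e_*}_{\Hilb}|}\,e_*
    =\operatorname{sgn}\bigl(\inner{u_0}{e_*}_{\Hilb}\bigr)e_*.
\]
The space is real, so the sign is \(\pm1\). Then \eqref{eq:linear-limit} gives \eqref{eq:simple-selected-limit}.

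For the last statement, if \(u_0\in\Dom(\Aop^\theta)\), then \eqref{eq:fractional-spectral-convergence} in \Cref{thm:linear-spectral-selection} gives convergence to the same \(u_\infty\) in \(\Dom(\Aop^\theta)\). Note that \(e_*\) lies in every fractional domain because it is an eigenvector of \(\Aop\). Every step is immediate; the only point needing care is justifying \(w_*\neq0\), and the active-support definition handles that.
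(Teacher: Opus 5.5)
Your proposal is correct and follows the paper's proof essentially verbatim. You write \(\mathsf E_*u_0=\inner{u_0}{e_*}_{\Hilb}e_*\), use the definition of \(\mu_*(u_0)\) to get nonvanishing, normalize to obtain the sign, and then invoke \Cref{thm:linear-spectral-selection}, including its fractional-domain part; your explicit justification of \(w_*\neq0\) via \eqref{eq:active-spectrum-invariant} simply spells out a step the paper leaves implicit.
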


\begin{proof}
Since \(\mathcal H_*=\Span\{e_*\}\),
\[
    \mathsf E_*u_0
    =
    \inner{u_0}{e_*}_{\Hilb}e_*.
\]
The definition of \(\mu_*(u_0)\) implies that this projection is nonzero.
Therefore
\[
\begin{aligned}
    \frac{\mathsf E_*u_0}
         {\norm{\mathsf E_*u_0}_{\Hilb}}
    &=
    \frac{
        \inner{u_0}{e_*}_{\Hilb}
    }{
        |\inner{u_0}{e_*}_{\Hilb}|
    }e_*
\\
    &=
    \operatorname{sgn}
    \bigl(
        \inner{u_0}{e_*}_{\Hilb}
    \bigr)e_*.
\end{aligned}
\]
The conclusion follows from
\Cref{thm:linear-spectral-selection}.
\end{proof}

\begin{corollary}[Degenerate selected eigenspace]
\label{cor:degenerate-selected-space}
Assume that
\[
    \dim\mathcal H_*=r\geq2.
\]
Then the asymptotic state is
\[
    u_\infty
    =
    \frac{\mathsf E_*u_0}
         {\norm{\mathsf E_*u_0}_{\Hilb}},
\]
and the direction of the projection \(\mathsf E_*u_0\) within
\(\mathcal H_*\) is preserved by the flow.  In particular, the linear
constrained dynamics do not distinguish between different unit vectors in
the same eigenspace \(\mathcal H_*\).
\end{corollary}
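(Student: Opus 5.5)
The plan is to read both assertions directly off the representation \eqref{eq:linear-limit} of \Cref{thm:linear-spectral-selection}, combined with the projection identity \eqref{eq:spectral-projection-evolution} of \Cref{prop:spectral-support-conservation}. The formula for the asymptotic state needs no further argument: it is exactly \eqref{eq:linear-limit}, with \(\mathsf E_*u_0\neq0\) guaranteed by \Cref{def:selected-spectral-value}. The hypothesis \(\dim\mathcal H_*=r\geq2\) plays no role in this first part. It only marks the case in which \(u_\infty\) is not determined up to sign by the eigenspace alone, in contrast with \Cref{cor:simple-selected-mode}.

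For the preservation of direction, I will apply \eqref{eq:spectral-projection-evolution} with \(\mu=\mu_*(u_0)\):
\[
    \mathsf E_*u(t)
    =
    \frac{e^{-\mu_*(u_0)t}}{\norm{S_P(t)u_0}_{\Hilb}}\,\mathsf E_*u_0 .
\]
The scalar factor is strictly positive for every \(t\geq0\), so \(\mathsf E_*u(t)\) is a positive multiple of \(\mathsf E_*u_0\). Consequently
\[
    \frac{\mathsf E_*u(t)}{\norm{\mathsf E_*u(t)}_{\Hilb}}
    =
    \frac{\mathsf E_*u_0}{\norm{\mathsf E_*u_0}_{\Hilb}}
    \qquad\text{for all } t\geq0,
\]
so the direction within \(\mathcal H_*\) is invariant. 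I also want to record that \(\mu_*(u(t))=\mu_*(u_0)\), which follows from \eqref{eq:active-spectrum-conserved}; this shows the selected space itself does not change along the flow.

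For the final sentence, I will take any unit vector \(\phi\in\mathcal H_*\). By \Cref{prop:linear-equilibria} it is an equilibrium, and by \Cref{thm:linear-spectral-selection} (the case \(\Sigma_P(\phi)=\{\mu_*\}\)) the trajectory starting at \(\phi\) is stationary. Moreover, every unit vector of \(\mathcal H_*\) arises as \(u_\infty\) for some datum, for instance \(u_0=\phi\) itself. The eigenvalue \(\mu_*\) and the energy \(\tfrac12\mu_*\) are the same at every such \(\phi\). So the dynamics select no preferred direction within \(\mathcal H_*\): the limit is whatever direction \(\mathsf E_*u_0\) already had.

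There is no real obstacle here. The only point needing care is stating precisely what ``do not distinguish'' means. I will make it concrete through the invariance identity above together with the fact that every point of \(\mathcal H_*\cap\Sphere\) is a stationary limit.
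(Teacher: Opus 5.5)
Your proposal is correct and follows the paper's proof. You apply \eqref{eq:spectral-projection-evolution} with $\mu=\mu_*(u_0)$ to see that $\mathsf E_*u(t)$ is a positive multiple of $\mathsf E_*u_0$, and you take the limit from \Cref{thm:linear-spectral-selection}; your extra observation that every unit vector of $\mathcal H_*$ gives a stationary trajectory is a sound elaboration of the final sentence, which the paper leaves implicit.
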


\begin{proof}
By \eqref{eq:spectral-projection-evolution},
\[
    \mathsf E_*u(t)
    =
    \frac{
        e^{-\mu_*t}
    }{
        \norm{S_P(t)u_0}_{\Hilb}
    }
    \mathsf E_*u_0.
\]
Thus \(\mathsf E_*u(t)\) is a positive scalar multiple of
\(\mathsf E_*u_0\) for every \(t\geq0\), so its direction in
\(\mathcal H_*\) is independent of time.  The asserted limit follows from
\Cref{thm:linear-spectral-selection}.
\end{proof}

The preceding result depends on the spectral support of the initial datum,
rather than only on the global minimum of the polynomial over the Dirichlet
spectrum.  The following consequence isolates the case in which the globally
preferred eigenspace is present initially.

\begin{corollary}[Selection of a global minimizing eigenspace]
\label{cor:global-minimizer-selection}
Let
\[
    \mu_{\min}
    :=
    \min_{j\geq1}\Ppoly(\lambda_j)
\]
and let
\[
    \mathcal H_{\min}
    :=
    \Ker\bigl(
        \Ppoly(\Bop)-\mu_{\min}I
    \bigr).
\]
Denote by \(\mathsf E_{\min}\) the orthogonal projection onto
\(\mathcal H_{\min}\).  If
\[
    \mathsf E_{\min}u_0\neq0,
\]
then
\[
    \mu_*(u_0)=\mu_{\min}
\]
and
\begin{equation}
\label{eq:global-minimizer-limit}
    u(t)
    \longrightarrow
    \frac{
        \mathsf E_{\min}u_0
    }{
        \norm{\mathsf E_{\min}u_0}_{\Hilb}
    }
    \quad\text{in }\Hilb.
\end{equation}
If
\[
    \mathsf E_{\min}u_0=0,
\]
then the flow remains orthogonal to \(\mathcal H_{\min}\) for all
\(t\geq0\) and selects the smallest polynomial spectral value contained in
the initial spectral support.
\end{corollary}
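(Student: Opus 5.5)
The plan is to reduce everything to \Cref{thm:linear-spectral-selection} and \Cref{prop:spectral-support-conservation}. First I check that \(\mu_{\min}\) is well defined. Since \(p_j=\Ppoly(\lambda_j)\to+\infty\), only finitely many \(p_j\) lie below any fixed level. Hence the infimum over \(j\) is attained, \(\mu_{\min}\in\spec(\Ppoly(\Bop))\), and \(\mathcal H_{\min}\) is a nontrivial finite-dimensional eigenspace.

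For the first case, assume \(\mathsf E_{\min}u_0\neq0\). By the invariant description \eqref{eq:active-spectrum-invariant}, this gives \(\mu_{\min}\in\Sigma_P(u_0)\). On the other hand, \(\Sigma_P(u_0)\subset\{p_j:j\geq1\}\), and every element of that set is at least \(\mu_{\min}\). Therefore \(\mu_*(u_0)=\min\Sigma_P(u_0)=\mu_{\min}\). It follows that \(\mathsf E_*=\mathsf E_{\min}\) and \(\mathcal H_*=\mathcal H_{\min}\). The limit \eqref{eq:global-minimizer-limit} is then exactly \eqref{eq:linear-limit}. If desired, we may also state the exponential rate and the fractional-domain versions, again inherited directly from \Cref{thm:linear-spectral-selection}.

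For the second case, assume \(\mathsf E_{\min}u_0=0\). Apply \eqref{eq:spectral-projection-evolution} with \(\mu=\mu_{\min}\). This gives \(\mathsf E_{\min}u(t)\) as a scalar multiple of \(\mathsf E_{\min}u_0=0\), so \(\mathsf E_{\min}u(t)=0\), that is, \(u(t)\perp\mathcal H_{\min}\) for all \(t\geq0\). Equivalently, \(\inner{u(t)}{e_j}_{\Hilb}=0\) for each \(j\) with \(p_j=\mu_{\min}\), by \eqref{eq:support-conserved-basis}. In this case \(\mu_*(u_0)>\mu_{\min}\), and \Cref{thm:linear-spectral-selection} shows that \(u(t)\) converges to \(\mathsf E_*u_0/\norm{\mathsf E_*u_0}_{\Hilb}\). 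Here \(\mu_*(u_0)\) is by definition the smallest polynomial spectral value present in the initial support, which is exactly the claimed selection.

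There is no real obstacle; the proof is bookkeeping. The only point needing care is the identification \(\mathsf E_*=\mathsf E_{\min}\) in the first case. This holds because both are spectral projections of the same self-adjoint operator \(\Ppoly(\Bop)\) for the same eigenvalue, once the equality \(\mu_*(u_0)=\mu_{\min}\) has been established.
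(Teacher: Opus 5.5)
Your proposal is correct and follows essentially the same route as the paper. In the first case you read off \(\mu_{\min}\in\Sigma_P(u_0)\), conclude \(\mu_*(u_0)=\mu_{\min}\) and invoke \Cref{thm:linear-spectral-selection}; in the second you apply \eqref{eq:spectral-projection-evolution} with \(\mu=\mu_{\min}\). Your added check that the minimum defining \(\mu_{\min}\) is attained, and your explicit remark that \(\mu_*(u_0)>\mu_{\min}\) in the second case, are details the paper leaves implicit.
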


\begin{proof}
If \(\mathsf E_{\min}u_0\neq0\), then
\(\mu_{\min}\in\Sigma_P(u_0)\).  Since \(\mu_{\min}\) is the smallest
spectral value of \(\Ppoly(\Bop)\), it follows that
\[
    \mu_*(u_0)=\mu_{\min}.
\]
The convergence statement is therefore a direct consequence of
\Cref{thm:linear-spectral-selection}.

If \(\mathsf E_{\min}u_0=0\), then
\eqref{eq:spectral-projection-evolution} gives
\[
    \mathsf E_{\min}u(t)=0
\]
for every \(t\geq0\).  Thus the flow cannot generate a component in the
globally minimizing eigenspace.  The selected value is consequently the
smallest element of the remaining active spectrum, as stated.
\end{proof}

\begin{corollary}
\label{cor:linear-total-dissipation}
Let \(u_0\in\EnergySpace\cap\Sphere\), and let
\(\mu_*=\mu_*(u_0)\).  Then
\begin{equation}
    \lim_{t\to\infty}\mathcal E_P(u(t))
    =
    \frac{\mu_*}{2},
    \label{eq:linear-energy-limit}
\end{equation}
and
\begin{equation}
\label{eq:linear-total-dissipation}
    \int_0^\infty
    \norm{u_t(t)}_{\Hilb}^2\,\dd t
    =
    \mathcal E_P(u_0)
    -
    \frac{\mu_*}{2}.
\end{equation}
\end{corollary}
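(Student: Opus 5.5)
The plan is to derive both identities from \Cref{thm:linear-spectral-selection} (convergence in \(\EnergySpace\)) combined with the energy identity of \Cref{prop:linear-energy-identity}.

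First, since \(u_0\in\EnergySpace=\Dom(\Aop^{1/2})\), apply the fractional part of \Cref{thm:linear-spectral-selection} with \(\theta=\frac12\). This gives \(u(t)\to u_\infty\) in \(\EnergySpace\). In the stationary case \(\Sigma_P(u_0)=\{\mu_*\}\) we have \(u(t)=u_0=u_\infty\) for all \(t\), so this convergence is trivial there.

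The functional \(\mathcal E_P\) in \eqref{eq:linear-energy} is a difference of two continuous quadratic forms on \(\EnergySpace\): \(\norm{\Aop^{1/2}\cdot}_{\Hilb}^2\) and \(\norm{\cdot}_{\Hilb}^2\), the latter continuous because \(\EnergySpace\hookrightarrow\Hilb\) by \Cref{cor:hilbert-triple}. Hence \(\mathcal E_P(u(t))\to\mathcal E_P(u_\infty)\). The limit \(u_\infty\) lies in the finite-dimensional space \(\mathcal H_*\subset\Dom(\Ppoly(\Bop))\) and has unit norm. By \eqref{eq:linear-energy-operator-form},
\[
    \mathcal E_P(u_\infty)=\tfrac12\inner{\Ppoly(\Bop)u_\infty}{u_\infty}_{\Hilb}=\tfrac12\mu_*\norm{u_\infty}_{\Hilb}^2=\tfrac{\mu_*}{2},
\]
which proves \eqref{eq:linear-energy-limit}.

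For \eqref{eq:linear-total-dissipation}, rewrite \eqref{eq:linear-energy-identity} as
\[
    \int_0^t\norm{u_s}_{\Hilb}^2\dd s=\mathcal E_P(u_0)-\mathcal E_P(u(t)).
\]
The integrand is nonnegative, so the left side increases monotonically to \(\int_0^\infty\norm{u_t}_{\Hilb}^2\dd t\) as \(t\to\infty\). The right side converges to \(\mathcal E_P(u_0)-\mu_*/2\) by the first part, and monotone convergence gives the identity with a finite value.

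The only step needing care is the passage from convergence of \(u(t)\) to convergence of the energy. This requires convergence in \(\EnergySpace\), not merely in \(\Hilb\), because the form part of \(\mathcal E_P\) is not continuous on \(\Hilb\). That requirement is exactly why the hypothesis \(u_0\in\EnergySpace\) is imposed and why we invoke the \(\theta=\tfrac12\) case of \Cref{thm:linear-spectral-selection}.
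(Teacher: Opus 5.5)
Your proposal is correct and follows the paper's proof: you get convergence of \(u(t)\) to \(u_\infty\) in \(\EnergySpace\) from the \(\theta=\tfrac12\) case of \Cref{thm:linear-spectral-selection}, use continuity of \(\mathcal E_P\) on \(\EnergySpace\) together with \(\mathcal E_P(u_\infty)=\mu_*/2\), and then let \(t\to\infty\) in the energy identity using monotone convergence. Your separate remark on the stationary case \(\Sigma_P(u_0)=\{\mu_*\}\) is a small extra that the paper leaves implicit.
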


\begin{proof}
By \Cref{thm:linear-spectral-selection},
\[
    u(t)\longrightarrow u_\infty
    \quad\text{in }\EnergySpace,
\]
because \(u_0\in\EnergySpace\).  Since
\[
    u_\infty\in
    \Ker(\Ppoly(\Bop)-\mu_*I)\cap\Sphere,
\]
we have
\[
    \mathcal E_P(u_\infty)
    =
    \frac12
    \inner{\Ppoly(\Bop)u_\infty}{u_\infty}_{\Hilb}
    =
    \frac{\mu_*}{2}.
\]
Continuity of \(\mathcal E_P\) on \(\EnergySpace\) therefore yields
\eqref{eq:linear-energy-limit}.  Letting \(t\to\infty\) in the energy
identity \eqref{eq:linear-energy-identity} and using monotone convergence
for the nonnegative dissipation integral gives
\eqref{eq:linear-total-dissipation}.
\end{proof}

\begin{remark}
\label{rem:polynomial-selection-mechanism}
The selected modes minimize the values
\[
    \Ppoly(\lambda_j)
\]
rather than the Dirichlet eigenvalues \(\lambda_j\) themselves.  Thus the
ordering induced by the constrained flow need not agree with the ordering
of the spectrum of \(\Bop\).  For a monotone increasing polynomial, the
mechanism reduces to the familiar preference for the lowest active
Dirichlet mode.  For a non-monotone polynomial, interior spectral bands may
be preferred.  The Swift--Hohenberg-type choice
\[
    \Ppoly(s)=(s-\rhoSH)^2
\]
will be examined in \Cref{sec:examples}; in that case the selected modes are
those active Dirichlet modes for which \(|\lambda_j-\rhoSH|\) is smallest.
\end{remark}

\begin{example}[Interior and degenerate spectral selection]
\label{ex:finite-spectral-selection}
Let \(\Om=(0,\pi)\).  The Dirichlet eigenvalues are
\(\lambda_j=j^2\), so \(\lambda_2=4\) and \(\lambda_3=9\).  Suppose the
initial datum has nonzero components in both corresponding eigenspaces.  For
\[
    \Ppoly(s)=(s-4)^2,
\]
the level \(\lambda=4\) is selected whenever no other active eigenvalue has
the same polynomial value.  If instead
\[
    \Ppoly(s)=\left(s-\frac{13}{2}\right)^2,
\]
then
\[
    \Ppoly(4)=\Ppoly(9)=\frac{25}{4}.
\]
If all remaining active Dirichlet eigenvalues lie farther from \(13/2\), the
selected eigenspace is the orthogonal direct sum of the eigenspaces for
\(4\) and \(9\).  Thus a non-monotone polynomial can select an interior
spectral level or several distinct Dirichlet levels simultaneously.
\end{example}

\begin{remark}
\label{rem:shift-selection}
The positive shift \(\shift\) has no influence on spectral selection.  Indeed,
the eigenvalues of \(\Aop\) are
\[
    q_j=\Ppoly(\lambda_j)+\shift,
\]
so
\[
    q_j-q_k
    =
    \Ppoly(\lambda_j)-\Ppoly(\lambda_k).
\]
Hence the minimizing eigenspaces, the active spectral gap, the normalized
trajectory, and its asymptotic state are identical whether the flow is
written in terms of \(\Ppoly(\Bop)\) or the positive operator \(\Aop\).
\end{remark}

\section{Spectral design and stability under polynomial perturbations}
\label{sec:design-robustness}

The polynomial may also be chosen with a prescribed minimizing set.  We first
give an explicit construction and then ask whether the selected set survives
a small change of the polynomial.

For a Dirichlet eigenvalue \(\lambda\), denote by
\(\mathsf E^{\Bop}_{\lambda}\) the orthogonal projection onto
\(\Ker(\Bop-\lambda I)\).  For \(u_0\in\Sphere\), define its active
Dirichlet spectrum by
\begin{equation}
\label{eq:active-dirichlet-spectrum-general}
    \Sigma_{\Bop}(u_0)
    :=
    \left\{
        \lambda\in\spec(\Bop):
        \mathsf E^{\Bop}_{\lambda}u_0\neq0
    \right\}.
\end{equation}

\begin{theorem}[Prescribed finite spectral selection]
\label{thm:prescribed-spectral-selection}
Let
\[
    J=\{\nu_1,\ldots,\nu_r\}
    \subset\spec(\Bop)
\]
be a finite set of distinct Dirichlet eigenvalues and define
\begin{equation}
\label{eq:prescribed-polynomial}
    \Ppoly_J(s)
    :=
    \prod_{k=1}^r(s-\nu_k)^2.
\end{equation}
Then \(\Ppoly_J\) is a real polynomial with positive leading coefficient,
\[
    \min_{\lambda\in\spec(\Bop)}\Ppoly_J(\lambda)=0,
\]
and its globally minimizing spectral subspace is exactly
\begin{equation}
\label{eq:prescribed-minimizing-space}
    \mathcal H_J
    :=
    \bigoplus_{k=1}^r\Ker(\Bop-\nu_kI).
\end{equation}
If \(\mathsf E_J\) denotes the orthogonal projection onto \(\mathcal H_J\)
and \(\mathsf E_Ju_0\neq0\), then the constrained flow generated by
\(\Ppoly_J(\Bop)\) satisfies
\begin{equation}
\label{eq:prescribed-selection-limit}
    u(t)
    \longrightarrow
    \frac{\mathsf E_Ju_0}{\norm{\mathsf E_Ju_0}_{\Hilb}}
    \qquad\text{in }\Hilb.
\end{equation}
If the initial datum has an active component outside \(\mathcal H_J\), the
convergence is exponential.
\end{theorem}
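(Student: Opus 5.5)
The plan is to reduce everything to \Cref{thm:linear-spectral-selection} and \Cref{cor:global-minimizer-selection}, once the algebraic facts about \(\Ppoly_J\) are in hand.

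\emph{Step 1: algebraic properties.} Expanding the product shows that \(\Ppoly_J\) is monic of degree \(2r\). It therefore satisfies \Cref{ass:domain-polynomial} with \(\degree=2r\) and \(a_{\degree}=1>0\), and all results of \Cref{sec:operator-setting,sec:linear-selection} apply to \(\Ppoly_J(\Bop)\). Each factor \((s-\nu_k)^2\) is nonnegative, so \(\Ppoly_J\geq0\) on \(\RR\). Moreover, \(\Ppoly_J(s)=0\) exactly when \(s\in J\). Since \(J\subset\spec(\Bop)\) is nonempty, the minimum of \(\Ppoly_J\) over \(\spec(\Bop)\) is attained and equals \(0\).

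\emph{Step 2: identifying the minimizing subspace.} The spectral definition \eqref{eq:P-B-definition} gives \(\Ppoly_J(\Bop)e_j=\Ppoly_J(\lambda_j)e_j\). Hence \(\Ker(\Ppoly_J(\Bop))\) is the closed span of those \(e_j\) with \(\Ppoly_J(\lambda_j)=0\), that is, with \(\lambda_j\in J\). To confirm that nothing else lies in the kernel, expand an arbitrary \(u\) in the basis: \(\Ppoly_J(\Bop)u=0\) forces \(u_j=0\) whenever \(\Ppoly_J(\lambda_j)\neq0\). Grouping the indices by the value of \(\lambda_j\) then gives \(\Ker\Ppoly_J(\Bop)=\mathcal H_J\). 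The \(\nu_k\) are distinct, so the summands \(\Ker(\Bop-\nu_kI)\) are mutually orthogonal and the direct sum is orthogonal. In the notation of \Cref{cor:global-minimizer-selection}, this shows \(\mu_{\min}=0\) and \(\mathcal H_{\min}=\mathcal H_J\).

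\emph{Step 3: convergence and rate.} If \(\mathsf E_Ju_0\neq0\), then \Cref{cor:global-minimizer-selection} gives \(\mu_*(u_0)=0\), \(\mathsf E_*=\mathsf E_J\), and the limit \eqref{eq:prescribed-selection-limit}. Now suppose \(u_0\) has an active component outside \(\mathcal H_J\). Then some index \(j\) with \(c_j\neq0\) has \(\lambda_j\notin J\), so \(\Ppoly_J(\lambda_j)>0\). Thus \(\Sigma_P(u_0)\setminus\{0\}\neq\varnothing\), and the active gap \(\delta_*(u_0)\) is defined. \Cref{thm:linear-spectral-selection} then yields the estimate \eqref{eq:H-spectral-selection-rate} with \(\delta_*(u_0)>0\). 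The rate can be described explicitly as \(\delta_*(u_0)=\min\{\Ppoly_J(\lambda):\lambda\in\Sigma_{\Bop}(u_0)\setminus J\}\). This minimum is positive because \(\Ppoly_J(\lambda_j)\to\infty\), so it is attained.

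\emph{Main obstacle.} None of the steps is hard. The only point needing care is Step 2: the kernel identification must be done at the level of eigenspaces, using \(\mathsf E^{\Bop}_\lambda\), rather than index by index. This is because a Dirichlet eigenvalue may have multiplicity greater than one, and the kernel must then contain the whole eigenspace \(\Ker(\Bop-\nu_kI)\), not just particular basis vectors.
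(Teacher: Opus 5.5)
Your proposal is correct and follows the paper's argument essentially step for step: nonnegativity and zero set of \(\Ppoly_J\), identification of \(\Ker\Ppoly_J(\Bop)\) with \(\mathcal H_J\), the selection limit via \Cref{thm:linear-spectral-selection} (you pass through \Cref{cor:global-minimizer-selection}, which follows immediately from it), and positivity of the active gap for the exponential rate. Your explicit formula \(\delta_*(u_0)=\min\{\Ppoly_J(\lambda):\lambda\in\Sigma_{\Bop}(u_0)\setminus J\}\) is a harmless refinement. Note, however, that positivity of this minimum comes from attainment (guaranteed by \(\Ppoly_J(\lambda_j)\to\infty\)) together with \(\Ppoly_J>0\) off \(J\), not from the divergence alone.
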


\begin{proof}
Each factor in \eqref{eq:prescribed-polynomial} is nonnegative on \(\RR\),
so \(\Ppoly_J(s)\geq0\).  Its leading term is \(s^{2r}\), hence the leading
coefficient is positive.  Moreover,
\[
    \Ppoly_J(\lambda)=0
    \quad\Longleftrightarrow\quad
    \lambda\in J
\]
for every \(\lambda\in\RR\).  Since each \(\nu_k\) belongs to the
Dirichlet spectrum, the minimum of \(\Ppoly_J\) on
\(\spec(\Bop)\) is zero and the associated eigenspace of
\(\Ppoly_J(\Bop)\) is precisely \(\mathcal H_J\).  If
\(\mathsf E_Ju_0\neq0\), zero belongs to the active polynomial spectrum and
is its smallest value.  The limit \eqref{eq:prescribed-selection-limit}
therefore follows from \Cref{thm:linear-spectral-selection}.  If an active
component lies outside \(\mathcal H_J\), then the remaining active
polynomial values are positive.  Since they form a discrete set tending to
\(+\infty\), the active gap above zero is positive, and the exponential
estimate follows from \eqref{eq:H-spectral-selection-rate}.
\end{proof}

The construction has degree \(2r\).  This is the least possible degree among
real polynomials that are nonnegative on \(\RR\) and vanish at the prescribed
\(r\) points, since each such zero has even multiplicity.  No minimality is
claimed if one imposes the minimizing condition only on the discrete
Dirichlet spectrum.

We next consider perturbations of a fixed polynomial.

\begin{theorem}[Stability under polynomial perturbations]
\label{thm:polynomial-perturbation-stability}
Let \(\Ppoly\) have degree \(\degree\) and positive leading coefficient,
let \(u_0\in\Sphere\), and set
\[
    \mu_*:=\mu_*(u_0),
    \qquad
    S_*:=
    \left\{
        \lambda\in\Sigma_{\Bop}(u_0):
        \Ppoly(\lambda)=\mu_*
    \right\}.
\]
Let \(R\) be a real polynomial with \(\deg R\leq\degree\), and for
\(\varepsilon\in\RR\) set
\[
    \Ppoly_\varepsilon:=\Ppoly+\varepsilon R.
\]
Define
\begin{equation}
\label{eq:perturbation-ratio}
    M_R(u_0)
    :=
    \sup_{\lambda\in\Sigma_{\Bop}(u_0)\setminus S_*}
    \max_{\nu\in S_*}
    \frac{|R(\lambda)-R(\nu)|}
         {\Ppoly(\lambda)-\mu_*},
\end{equation}
with the convention \(M_R(u_0)=0\) when
\(\Sigma_{\Bop}(u_0)=S_*\).  Then \(M_R(u_0)<\infty\).

Suppose \(|\varepsilon|M_R(u_0)<1\) and the leading coefficient of
\(\Ppoly_\varepsilon\) is positive.  Then no active Dirichlet level outside
\(S_*\) minimizes \(\Ppoly_\varepsilon\).  More precisely, the active
minimizing set for \(\Ppoly_\varepsilon\) is
\begin{equation}
\label{eq:perturbed-selected-set}
    S_{*,\varepsilon}
    =
    \operatorname*{argmin}_{\nu\in S_*}
    \bigl(\varepsilon R(\nu)\bigr).
\end{equation}
Consequently, if \(S_*=\{\lambda_*\}\) is a single Dirichlet level, then
\(\lambda_*\) remains the unique selected active level for all such
\(\varepsilon\).  If \(S_*\) contains several levels, a small perturbation
may split the degeneracy, and the surviving selected levels are exactly those
specified by \eqref{eq:perturbed-selected-set}.
\end{theorem}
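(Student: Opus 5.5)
The proof has three parts: finiteness of \(M_R(u_0)\), a two-sided comparison showing that no level outside \(S_*\) can compete, and identification of the minimizers inside \(S_*\).

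\emph{Finiteness of \(M_R(u_0)\).} I first note that \(S_*\) is a finite set of Dirichlet levels, by \Cref{rem:selected-eigenspace-finite}. Next I show that the active levels outside \(S_*\) have polynomial values bounded away from \(\mu_*\) from above. Every \(\lambda\in\Sigma_{\Bop}(u_0)\setminus S_*\) satisfies \(\Ppoly(\lambda)>\mu_*\), since \(\mu_*\) is the minimum of the active polynomial spectrum. Only finitely many Dirichlet levels lie in any bounded interval, and \(\Ppoly(\lambda)\to\infty\). Hence \(\Ppoly(\lambda)-\mu_*\geq\delta_*(u_0)>0\) on this set, whenever the set is nonempty.

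For the quotient in \eqref{eq:perturbation-ratio}, I split into two ranges of \(\lambda\).
\begin{itemize}
\item On a bounded range \(\lambda\leq L\), there are finitely many levels. Each quotient is finite, so their maximum is finite.
\item For large \(\lambda\), I use \(\deg R\leq\degree\) and \(a_\degree>0\). These give \(|R(\lambda)|+\max_{\nu\in S_*}|R(\nu)|\leq C(1+\lambda^\degree)\), while \(\Ppoly(\lambda)-\mu_*\geq\frac{a_\degree}{4}\lambda^\degree\) once \(\lambda\) is large, exactly as in the proof of \Cref{prop:domain-PB}. So the quotient is bounded by a constant for large \(\lambda\).
\end{itemize}
Combining the two ranges gives \(M_R(u_0)<\infty\). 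This degree comparison is the only place where \(\deg R\leq\degree\) enters, and it is the main point of the finiteness claim.

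\emph{Comparison argument.} The active Dirichlet spectrum \(\Sigma_{\Bop}(u_0)\) does not depend on the polynomial, so the active polynomial spectrum for \(\Ppoly_\varepsilon\) is \(\{\Ppoly_\varepsilon(\lambda):\lambda\in\Sigma_{\Bop}(u_0)\}\). Fix \(\lambda\in\Sigma_{\Bop}(u_0)\setminus S_*\) and \(\nu\in S_*\). Since \(\Ppoly(\nu)=\mu_*\),
\[
    \Ppoly_\varepsilon(\lambda)-\Ppoly_\varepsilon(\nu)
    =
    \bigl(\Ppoly(\lambda)-\mu_*\bigr)
    +
    \varepsilon\bigl(R(\lambda)-R(\nu)\bigr)
    \geq
    \bigl(\Ppoly(\lambda)-\mu_*\bigr)\bigl(1-|\varepsilon|M_R(u_0)\bigr)
    >0 .
\]
Therefore every active level outside \(S_*\) has a strictly larger \(\Ppoly_\varepsilon\)-value than every level in \(S_*\). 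This proves the first claim.

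The minimum of \(\Ppoly_\varepsilon\) over the active spectrum is attained, because the set is discrete and \(\Ppoly_\varepsilon\to\infty\) (the leading coefficient is positive by hypothesis). By the comparison, the minimizing levels lie in \(S_*\). On \(S_*\) we have \(\Ppoly_\varepsilon(\nu)=\mu_*+\varepsilon R(\nu)\), so the minimizers are exactly \(\operatorname{argmin}_{\nu\in S_*}\varepsilon R(\nu)\). This is \eqref{eq:perturbed-selected-set}.

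\emph{Consequences.} The singleton case follows immediately, because the argmin over a one-point set is that point. When \(S_*\) contains several levels, the splitting statement is just \eqref{eq:perturbed-selected-set} read off directly. The selection itself, namely convergence to the projection onto the eigenspaces of the levels in \(S_{*,\varepsilon}\), then follows by applying \Cref{thm:linear-spectral-selection} to \(\Ppoly_\varepsilon\).
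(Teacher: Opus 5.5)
Your proposal is correct and follows essentially the same route as the paper. You prove finiteness of \(M_R(u_0)\) by treating the finitely many active levels in a bounded range separately and using \(\deg R\leq\degree\) against \(\Ppoly(\lambda)\sim a_\degree\lambda^\degree\) for large \(\lambda\); you then use the lower bound \(\bigl(1-|\varepsilon|M_R(u_0)\bigr)\bigl(\Ppoly(\lambda)-\mu_*\bigr)>0\) and the identity \(\Ppoly_\varepsilon(\nu)=\mu_*+\varepsilon R(\nu)\) on \(S_*\), exactly as the paper does, with your extra remarks (the bound via \(\delta_*(u_0)\) and attainment of the minimum) being harmless details the paper leaves implicit.
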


\begin{proof}
The set \(S_*\) is finite by \Cref{rem:selected-eigenspace-finite}.  For
\(\lambda\in\Sigma_{\Bop}(u_0)\setminus S_*\),
\[
    \Ppoly(\lambda)-\mu_*>0.
\]
Because \(\deg R\leq\degree\) and
\(\Ppoly(\lambda)\sim a_{\degree}\lambda^\degree\) along the Dirichlet
spectrum, the quotient in \eqref{eq:perturbation-ratio} is bounded for large
\(\lambda\); only finitely many remaining active levels lie in a bounded
spectral interval.  Hence \(M_R(u_0)<\infty\).

Let \(\lambda\in\Sigma_{\Bop}(u_0)\setminus S_*\) and \(\nu\in S_*\).
Then
\[
\begin{aligned}
    \Ppoly_\varepsilon(\lambda)
    -\Ppoly_\varepsilon(\nu)
    &=
    \Ppoly(\lambda)-\mu_*
    +
    \varepsilon\bigl(R(\lambda)-R(\nu)\bigr)
\\
    &\geq
    \bigl(1-|\varepsilon|M_R(u_0)\bigr)
    \bigl(\Ppoly(\lambda)-\mu_*\bigr)
    >0.
\end{aligned}
\]
Thus every active level outside \(S_*\) has strictly larger perturbed
polynomial value than every level in \(S_*\).  On \(S_*\),
\[
    \Ppoly_\varepsilon(\nu)
    =
    \mu_*+\varepsilon R(\nu),
\]
so the minimizing levels are precisely those in
\eqref{eq:perturbed-selected-set}.  The final two assertions follow.
\end{proof}

Only active Dirichlet levels enter this comparison.  Levels absent from
\(u_0\) remain absent by \Cref{prop:spectral-support-conservation}.  If the
whole active spectrum is already contained in \(S_*\), the separation
condition is vacuous, although \eqref{eq:perturbed-selected-set} may still
split the degeneracy inside \(S_*\).

The restriction \(\deg R\leq\degree\) is what keeps the quotient in
\eqref{eq:perturbation-ratio} bounded at high spectral levels.  For
\(\deg R>\degree\) it may diverge.  The positivity of the leading
coefficient of \(\Ppoly_\varepsilon\) is automatic for small
\(|\varepsilon|\) when \(\deg R\leq\degree\).

\section{Polyharmonic and Swift--Hohenberg spectral consequences}
\label{sec:examples}

Two choices of \(\Ppoly\) make the difference in spectral ordering explicit.
The monomial preserves the order of the Dirichlet eigenvalues; the shifted
square orders them by distance from an interior level.

\subsection{The polyharmonic flow}
\label{subsec:polyharmonic-selection}

Let
\[
    \Ppoly_m(s)=s^m.
\]
Then \(\Ppoly_m(\Bop)=\Bop^m\) with domain \(\Dom(\Bop^m)\).  On a smooth domain this is the Navier realization of \(({-\Delta})^m\), with
\[
    u=\Delta u=\cdots=\Delta^{m-1}u=0
    \qquad\text{on }\partial\Om.
\]
The polynomial is strictly increasing on the positive Dirichlet spectrum, so its ordering agrees with the ordering of the active Dirichlet eigenvalues.

For an initial datum \(u_0\in\Sphere\), let
\begin{equation}
\label{eq:B-active-spectrum}
    \Sigma_{\Bop}(u_0)
    :=
    \left\{
        \lambda\in\spec(\Bop):
        \mathsf E^{\Bop}_{\lambda}u_0\neq0
    \right\},
\end{equation}
where
\(\mathsf E^{\Bop}_{\lambda}\) denotes the orthogonal projection onto
\[
    \Ker(\Bop-\lambda I).
\]
Since the Dirichlet spectrum is discrete and bounded below by
\(\lambda_1>0\), the set \(\Sigma_{\Bop}(u_0)\) has a smallest element.

\begin{corollary}[Polyharmonic spectral selection]
\label{cor:polyharmonic-selection}
Consider the linear constrained polyharmonic equation
\begin{equation}
\label{eq:linear-polyharmonic}
    u_t
    =
    -\Proj{u}\Bop^{\degree}u,
    \qquad
    u(0)=u_0\in\Sphere.
\end{equation}
Define
\begin{equation}
\label{eq:lambda-star-poly}
    \lambda_*(u_0)
    :=
    \min\Sigma_{\Bop}(u_0).
\end{equation}
Set
\begin{equation}
\label{eq:polyharmonic-selection-limit}
    u_\infty
    :=
    \frac{
        \mathsf E^{\Bop}_{\lambda_*(u_0)}u_0
    }{
        \norm{
            \mathsf E^{\Bop}_{\lambda_*(u_0)}u_0
        }_{\Hilb}
    }.
\end{equation}
Then
\[
    u(t)\longrightarrow u_\infty
    \qquad\text{strongly in }\Hilb.
\]

If
\[
    \Sigma_{\Bop}(u_0)
    \setminus
    \{\lambda_*(u_0)\}
    \neq\varnothing,
\]
set
\begin{equation}
\label{eq:polyharmonic-gap}
\begin{aligned}
    \delta_{\degree}(u_0)
    :=
    \min_{\substack{
        \lambda\in\Sigma_{\Bop}(u_0)\\
        \lambda>\lambda_*(u_0)
    }}
    \left(
        \lambda^{\degree}
        -
        \lambda_*(u_0)^{\degree}
    \right).
\end{aligned}
\end{equation}
Then
\[
    \delta_{\degree}(u_0)>0,
\]
and
\begin{equation}
\label{eq:polyharmonic-selection-rate}
    \norm{u(t)-u_\infty}_{\Hilb}
    \leq
    C(u_0)
    e^{-\delta_{\degree}(u_0)t}.
\end{equation}
If
\(u_0\in\Dom(\Bop^{\degree/2})\), the convergence also holds in
\(\Dom(\Bop^{\degree/2})\), with the same exponential spectral gap up to a
multiplicative constant.
\end{corollary}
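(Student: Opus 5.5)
The plan is to obtain the statement as a direct specialization of \Cref{thm:linear-spectral-selection} to \(\Ppoly_\degree(s)=s^\degree\). The only work is translating polynomial-spectral objects into Dirichlet-spectral ones.

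First, since \(\lambda_j\geq\lambda_1>0\) and \(s\mapsto s^\degree\) is strictly increasing on \((0,\infty)\), we have \(\lambda_j^\degree=\lambda_k^\degree\) if and only if \(\lambda_j=\lambda_k\). Hence for every Dirichlet eigenvalue \(\lambda\),
\[
    \Ker(\Bop^\degree-\lambda^\degree I)=\Ker(\Bop-\lambda I),
    \qquad
    \mathsf E_{\lambda^\degree}=\mathsf E^{\Bop}_{\lambda}.
\]
It follows that \(\Sigma_P(u_0)=\{\lambda^\degree:\lambda\in\Sigma_{\Bop}(u_0)\}\). By monotonicity, \(\mu_*(u_0)=\lambda_*(u_0)^\degree\), the selected eigenspace is \(\mathcal H_*=\Ker(\Bop-\lambda_*(u_0)I)\), and \(\mathsf E_*=\mathsf E^{\Bop}_{\lambda_*(u_0)}\). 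Therefore the limit \(u_\infty\) of \eqref{eq:linear-limit} coincides with \eqref{eq:polyharmonic-selection-limit}. Moreover, the active gap \(\delta_*(u_0)\) of \eqref{eq:active-spectral-gap} equals \(\min\{\lambda^\degree-\lambda_*^\degree:\lambda\in\Sigma_{\Bop}(u_0),\ \lambda>\lambda_*\}=\delta_\degree(u_0)\). This gap is positive because the set \(\{\lambda\in\Sigma_{\Bop}(u_0):\lambda>\lambda_*\}\) is nonempty, discrete and bounded below, so the minimum is attained at some \(\lambda>\lambda_*\). Strong convergence and the rate \eqref{eq:polyharmonic-selection-rate} then follow from \eqref{eq:H-spectral-selection-rate}.

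For the energy-space statement, \Cref{prop:fractional-domain-equivalence} with \(\theta=\tfrac12\) gives \(\Dom(\Aop^{1/2})=\Dom(\Bop^{\degree/2})\) with equivalent norms. Consequently, \(u_0\in\Dom(\Bop^{\degree/2})\) implies \(u_0\in\Dom(\Aop^{1/2})\). Then \eqref{eq:fractional-spectral-convergence} and \eqref{eq:fractional-spectral-rate} give convergence at rate \(e^{-\delta_\degree(u_0)t}\) in the \(\Aop^{1/2}\)-norm. Transferring back through the norm equivalence costs only a multiplicative constant; this is the meaning of ``up to a multiplicative constant'' in the statement.

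I do not expect any real obstacle. The one point that needs care is the identification of the eigenspaces of \(\Bop^\degree\) with those of \(\Bop\), which uses injectivity of \(s^\degree\) on the positive spectrum. Without that injectivity, distinct Dirichlet levels could merge into one polynomial level, as in \Cref{ex:finite-spectral-selection}. A second point is that the shift \(\shift\) is irrelevant here, by \Cref{rem:shift-selection}.
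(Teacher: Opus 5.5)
Your proposal is correct and follows the paper's proof essentially step for step. You specialize \Cref{thm:linear-spectral-selection} using strict monotonicity of \(s\mapsto s^{\degree}\) on the positive Dirichlet spectrum, which identifies \(\mu_*\), the selected eigenspace and the gap. You then obtain the energy-space statement from \(\Dom(\Aop^{1/2})=\Dom(\Bop^{\degree/2})\) together with \eqref{eq:fractional-spectral-rate} at \(\theta=\tfrac12\). Your explicit justification of the eigenspace identification \(\Ker(\Bop^\degree-\lambda^\degree I)=\Ker(\Bop-\lambda I)\) is slightly more detailed than the paper's, but it is the same argument.
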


\begin{proof}
For
\[
    \Ppoly_{\degree}(s)=s^{\degree},
\]
the function
\[
    s\longmapsto s^{\degree}
\]
is strictly increasing on \((0,\infty)\).  Since every Dirichlet eigenvalue
is positive, minimizing
\[
    \Ppoly_{\degree}(\lambda)
    =
    \lambda^{\degree}
\]
over the active spectral support is equivalent to minimizing \(\lambda\)
itself.  Hence the selected polynomial spectral value in
\Cref{thm:linear-spectral-selection} is
\[
    \mu_*(u_0)
    =
    \lambda_*(u_0)^{\degree},
\]
and its selected eigenspace is
\[
    \Ker(
        \Bop^{\degree}
        -
        \lambda_*(u_0)^{\degree}I
    )
    =
    \Ker(
        \Bop-\lambda_*(u_0)I
    ).
\]
The limit \eqref{eq:polyharmonic-selection-limit} therefore follows from
\eqref{eq:linear-limit}.

The active polynomial spectral gap from
\eqref{eq:active-spectral-gap} is precisely
\[
    \min_{\lambda>\lambda_*}
    \left(
        \lambda^{\degree}
        -
        \lambda_*^{\degree}
    \right)
\]
over the active Dirichlet spectrum, which is
\eqref{eq:polyharmonic-gap}.  The exponential estimate follows from
\eqref{eq:H-spectral-selection-rate}.  If
\[
    u_0\in\Dom(\Bop^{\degree/2}),
\]
then
\[
    \Dom(\Bop^{\degree/2})
    =
    \Dom(\Aop^{1/2})
\]
with equivalent norms, so
\eqref{eq:fractional-spectral-rate} with
\(\theta=1/2\) yields convergence in the energy topology.
\end{proof}

\begin{remark}
\label{rem:polyharmonic-first-mode}
If
\[
    \mathsf E^{\Bop}_{\lambda_1}u_0\neq0,
\]
then
\[
    \lambda_*(u_0)=\lambda_1,
\]
and the linear polyharmonic flow selects the normalized projection of the
initial datum onto the first Dirichlet eigenspace.  If the initial datum is
orthogonal to that eigenspace, the orthogonality is preserved and the flow
selects the lowest Dirichlet eigenspace that is present in the initial
spectral support.
\end{remark}

\subsection{The Swift--Hohenberg-type flow}
\label{subsec:SH-selection}

Fix \(\rhoSH>0\) and set
\[
    \Ppoly_{\rhoSH}(s)=(s-\rhoSH)^2.
\]
Then
\[
    \Ppoly_{\rhoSH}(\Bop)=(\Bop-\rhoSH I)^2
    =\Bop^2-2\rhoSH\Bop+\rhoSH^2I,
\]
with domain \(\Dom(\Bop^2)\).  Since \(\Bop=-\Delta_D\), its differential expression is \((\Delta+\rhoSH)^2\), and on a smooth domain the spectral realization carries the Navier boundary conditions
\[
    u=\Delta u=0
    \qquad\text{on }\partial\Om.
\]
The quadratic polynomial
\[
    s\longmapsto(s-\rhoSH)^2
\]
orders Dirichlet modes according to their distance from the spectral level
\(\rhoSH\).  This differs essentially from the monotone polyharmonic case.

For \(u_0\in\Sphere\), retain the active Dirichlet spectrum
\(\Sigma_{\Bop}(u_0)\) from
\eqref{eq:B-active-spectrum} and define
\begin{equation}
\label{eq:rho-distance}
    d_{\rhoSH}(u_0)
    :=
    \min_{\lambda\in\Sigma_{\Bop}(u_0)}
    |\lambda-\rhoSH|.
\end{equation}
Let
\begin{equation}
\label{eq:rho-selected-spectrum}
    \Sigma_{\rhoSH,*}(u_0)
    :=
    \left\{
        \lambda\in\Sigma_{\Bop}(u_0):
        |\lambda-\rhoSH|
        =
        d_{\rhoSH}(u_0)
    \right\},
\end{equation}
and define the selected subspace
\begin{equation}
\label{eq:rho-selected-space}
    \mathcal H_{\rhoSH,*}(u_0)
    :=
    \bigoplus_{
        \lambda\in\Sigma_{\rhoSH,*}(u_0)
    }
    \Ker(\Bop-\lambda I).
\end{equation}
Denote by
\[
    \mathsf E_{\rhoSH,*}
\]
the orthogonal projection onto
\(\mathcal H_{\rhoSH,*}(u_0)\).

\begin{theorem}[Swift--Hohenberg spectral selection]
\label{thm:SH-spectral-selection}
Consider the linear constrained problem
\begin{equation}
\label{eq:linear-SH}
    u_t
    =
    -\Proj{u}
    (\Bop-\rhoSH I)^2u,
    \qquad
    u(0)=u_0\in\Sphere.
\end{equation}
Then
\begin{equation}
\label{eq:linear-SH-explicit}
    u(t)
    =
    \frac{
        e^{-t(\Bop-\rhoSH I)^2}u_0
    }{
        \norm{
            e^{-t(\Bop-\rhoSH I)^2}u_0
        }_{\Hilb}
    },
\end{equation}
and
\begin{equation}
\label{eq:SH-selection-limit}
    u(t)
    \longrightarrow
    u_{\rhoSH,*}
    :=
    \frac{
        \mathsf E_{\rhoSH,*}u_0
    }{
        \norm{
            \mathsf E_{\rhoSH,*}u_0
        }_{\Hilb}
    }
    \qquad
    \text{strongly in }\Hilb.
\end{equation}

If
\[
    \Sigma_{\Bop}(u_0)
    \setminus
    \Sigma_{\rhoSH,*}(u_0)
    \neq\varnothing,
\]
define
\begin{equation}
\label{eq:SH-active-gap}
\begin{aligned}
    \delta_{\rhoSH}(u_0)
    &:=
    \min_{\lambda\in
        \Sigma_{\Bop}(u_0)
        \setminus
        \Sigma_{\rhoSH,*}(u_0)}
    \left[
        (\lambda-\rhoSH)^2
        -
        d_{\rhoSH}(u_0)^2
    \right].
\end{aligned}
\end{equation}
Then
\[
    \delta_{\rhoSH}(u_0)>0
\]
and
\begin{equation}
\label{eq:SH-selection-rate}
    \norm{
        u(t)-u_{\rhoSH,*}
    }_{\Hilb}
    \leq
    C(u_0)e^{-\delta_{\rhoSH}(u_0)t}.
\end{equation}
For every \(\tau>0\), the convergence also holds in \(\Dom(\Bop)\) for
\(t\geq\tau\), with an exponential estimate governed by the same active
spectral gap.  If \(u_0\in\Dom(\Bop)\), the corresponding estimate is valid
from \(t=0\).
\end{theorem}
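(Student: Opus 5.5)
The plan is to specialize the general results of \Cref{sec:linear-selection} to $\Ppoly_\rhoSH(s)=(s-\rhoSH)^2$. This polynomial has degree $2$ and leading coefficient $1>0$, so \Cref{ass:domain-polynomial} holds with $\degree=2$. First, the explicit formula \eqref{eq:linear-SH-explicit} is exactly \eqref{eq:normalized-semigroup-formula} from \Cref{thm:normalized-semigroup}, with $S_P(t)=e^{-t(\Bop-\rhoSH I)^2}$.

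Next I would translate the polynomial data into Dirichlet data.

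\begin{itemize}
\item \emph{Active set.} Since $\Ppoly_\rhoSH(\Bop)$ acts on $\Ker(\Bop-\lambda I)$ as multiplication by $(\lambda-\rhoSH)^2$, the active polynomial spectral set is $\Sigma_P(u_0)=\{(\lambda-\rhoSH)^2:\lambda\in\Sigma_{\Bop}(u_0)\}$.
\item \emph{Selected value.} The square function is increasing in $|\lambda-\rhoSH|$, so $\mu_*(u_0)=d_\rhoSH(u_0)^2$. The minimum in \eqref{eq:rho-distance} is attained because only finitely many Dirichlet levels lie in any bounded interval.
\item \emph{Selected eigenspace.} The kernel $\Ker\bigl((\Bop-\rhoSH I)^2-\mu_*I\bigr)$ is the direct sum of $\Ker(\Bop-\lambda I)$ over the Dirichlet levels $\lambda$ with $|\lambda-\rhoSH|=d_\rhoSH(u_0)$. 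There are at most two such levels, $\rhoSH\pm d_\rhoSH$.
\item \emph{Projection.} Some of these levels may be inactive, but $u_0$ has zero component there. Hence $\mathsf E_*u_0=\mathsf E_{\rhoSH,*}u_0$.
\end{itemize}

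With these identifications, the limit \eqref{eq:SH-selection-limit} follows from \eqref{eq:linear-limit} in \Cref{thm:linear-spectral-selection}.

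For the rate, the active gap \eqref{eq:active-spectral-gap} becomes the minimum of $(\lambda-\rhoSH)^2-d_\rhoSH^2$ over active $\lambda$ with strictly larger polynomial value. These are exactly the $\lambda\in\Sigma_{\Bop}(u_0)\setminus\Sigma_{\rhoSH,*}(u_0)$, so $\delta_*(u_0)=\delta_\rhoSH(u_0)$, which is \eqref{eq:SH-active-gap}. Positivity of this gap and the bound \eqref{eq:SH-selection-rate} then come directly from \eqref{eq:H-spectral-selection-rate}.

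For the $\Dom(\Bop)$ statements, I would use \Cref{prop:fractional-domain-equivalence} with $\degree=2$ and $\theta=1/2$. This gives $\Dom(\Aop^{1/2})=\Dom(\Bop)$ with $\norm{\Aop^{1/2}\cdot}_\Hilb$ equivalent to the graph norm. The positive-time estimate for $t\ge\tau$ with rough data is then \Cref{cor:rough-data-fractional-rate} with $\theta=1/2$. If $u_0\in\Dom(\Bop)$, the estimate from $t=0$ is \eqref{eq:fractional-spectral-rate} with $\theta=1/2$.

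The only step requiring care is the bookkeeping that identifies $\mathcal H_*$ with $\mathcal H_{\rhoSH,*}(u_0)$ when one of the two equidistant levels is inactive. It is harmless, since \Cref{prop:spectral-support-conservation} shows that inactive components stay zero. I expect no real obstacle beyond this.
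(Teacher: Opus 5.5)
Your proposal is correct and follows essentially the same route as the paper: you specialize \Cref{thm:normalized-semigroup} and \Cref{thm:linear-spectral-selection} to $(s-\rho)^2$, identify $\mu_*=d_\rho(u_0)^2$, $\mathsf E_*u_0=\mathsf E_{\rho,*}u_0$ and $\delta_*(u_0)=\delta_\rho(u_0)$, and then use $\Dom(\Aop^{1/2})=\Dom(\Bop)$. You also make explicit two points that the paper's proof passes over: you treat a possibly inactive equidistant level, where $\mathcal H_*$ can be strictly larger than $\mathcal H_{\rho,*}(u_0)$ although the projections of $u_0$ agree, and you invoke \Cref{cor:rough-data-fractional-rate} with $\theta=1/2$ for the rough-data estimate on $t\geq\tau$, whereas the paper only writes out the case $u_0\in\Dom(\Bop)$.
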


\begin{proof}
For
\[
    \Ppoly_{\rhoSH}(s)
    =
    (s-\rhoSH)^2,
\]
\Cref{thm:normalized-semigroup} gives
\eqref{eq:linear-SH-explicit}.  The active polynomial spectral values are
\[
    (\lambda-\rhoSH)^2,
    \qquad
    \lambda\in\Sigma_{\Bop}(u_0).
\]
Their minimum is
\[
    d_{\rhoSH}(u_0)^2.
\]
Thus the eigenspaces of
\((\Bop-\rhoSH I)^2\) contributing to the selected spectral value are
exactly the Dirichlet eigenspaces satisfying
\[
    |\lambda-\rhoSH|
    =
    d_{\rhoSH}(u_0).
\]
Their orthogonal direct sum is
\(\mathcal H_{\rhoSH,*}(u_0)\), and the spectral projection of \(u_0\)
onto the minimizing eigenspace of
\((\Bop-\rhoSH I)^2\) is precisely
\[
    \mathsf E_{\rhoSH,*}u_0.
\]
The convergence \eqref{eq:SH-selection-limit} therefore follows from
\Cref{thm:linear-spectral-selection}.

If active modes remain outside the selected subspace, the next active
polynomial spectral value is strictly larger than
\(d_{\rhoSH}(u_0)^2\).  Since the active polynomial spectrum is discrete
and tends to \(+\infty\), the minimum in
\eqref{eq:SH-active-gap} exists and is strictly positive.  This is exactly
the active spectral gap of
\Cref{thm:linear-spectral-selection}, so
\eqref{eq:SH-selection-rate} follows from
\eqref{eq:H-spectral-selection-rate}.

Finally, for the quadratic polynomial,
\[
    \Dom(
        \Aop^{1/2}
    )
    =
    \Dom(\Bop).
\]
Thus if \(u_0\in\Dom(\Bop)\),
\eqref{eq:fractional-spectral-rate} with
\(\theta=1/2\), together with equivalence of the corresponding graph norms,
gives exponential convergence in \(\Dom(\Bop)\).
\end{proof}

\begin{corollary}[Resonant selection]
\label{cor:SH-resonance}
Suppose
\[
    \rhoSH\in\spec(\Bop)
\]
and
\[
    \mathsf E^{\Bop}_{\rhoSH}u_0\neq0.
\]
Then
\[
    d_{\rhoSH}(u_0)=0
\]
and the linear constrained flow satisfies
\begin{equation}
\label{eq:SH-resonant-limit}
    u(t)
    \longrightarrow
    \frac{
        \mathsf E^{\Bop}_{\rhoSH}u_0
    }{
        \norm{
            \mathsf E^{\Bop}_{\rhoSH}u_0
        }_{\Hilb}
    }.
\end{equation}
\end{corollary}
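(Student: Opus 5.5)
The plan is to reduce the statement to \Cref{thm:SH-spectral-selection}. The only real work is to identify the minimizing set \(\Sigma_{\rhoSH,*}(u_0)\) explicitly.

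First, the hypothesis \(\mathsf E^{\Bop}_{\rhoSH}u_0\neq0\) means, by \eqref{eq:B-active-spectrum}, that \(\rhoSH\in\Sigma_{\Bop}(u_0)\). Since \(|\lambda-\rhoSH|\geq0\) for every active level and the value \(0\) is attained at \(\lambda=\rhoSH\), the minimum in \eqref{eq:rho-distance} is \(d_{\rhoSH}(u_0)=0\).

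Next, I would determine \(\Sigma_{\rhoSH,*}(u_0)\) from \eqref{eq:rho-selected-spectrum}. An active level \(\lambda\) belongs to it exactly when \(|\lambda-\rhoSH|=0\), that is, when \(\lambda=\rhoSH\). Hence \(\Sigma_{\rhoSH,*}(u_0)=\{\rhoSH\}\). This is the one point where a non-resonant situation could differ: there, two distinct levels equidistant from \(\rhoSH\) could both be selected. Distance zero rules this out, because \(\lambda\) is determined uniquely. Consequently \eqref{eq:rho-selected-space} reduces to \(\mathcal H_{\rhoSH,*}(u_0)=\Ker(\Bop-\rhoSH I)\), and \(\mathsf E_{\rhoSH,*}=\mathsf E^{\Bop}_{\rhoSH}\).

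Finally, substituting this projection into \eqref{eq:SH-selection-limit} gives \eqref{eq:SH-resonant-limit} directly, with strong convergence in \(\Hilb\). If further active levels are present, the exponential rate \eqref{eq:SH-selection-rate} is inherited with gap \(\min_{\lambda\neq\rhoSH}(\lambda-\rhoSH)^2\). None of these steps is a genuine obstacle; the only point needing care is the uniqueness argument that the zero-distance set is the single level \(\rhoSH\).
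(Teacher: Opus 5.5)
Your proposal is correct and follows essentially the same route as the paper. You use $\mathsf E^{\Bop}_{\rhoSH}u_0\neq0$ to get $\rhoSH\in\Sigma_{\Bop}(u_0)$ and hence $d_{\rhoSH}(u_0)=0$; zero distance forces $\Sigma_{\rhoSH,*}(u_0)=\{\rhoSH\}$, so $\mathcal H_{\rhoSH,*}(u_0)=\Ker(\Bop-\rhoSH I)$, and the limit follows from \Cref{thm:SH-spectral-selection}. Your extra remark on the inherited exponential rate, with the minimum taken over active levels $\lambda\neq\rhoSH$, is also correct but goes beyond what the corollary states.
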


\begin{proof}
Since
\[
    \rhoSH\in\Sigma_{\Bop}(u_0),
\]
one has
\[
    |\rhoSH-\rhoSH|=0.
\]
No smaller nonnegative distance is possible, so
\[
    d_{\rhoSH}(u_0)=0.
\]
The only Dirichlet spectral value at zero distance from \(\rhoSH\) is
\(\rhoSH\) itself.  Hence
\[
    \mathcal H_{\rhoSH,*}(u_0)
    =
    \Ker(\Bop-\rhoSH I),
\]
and \eqref{eq:SH-resonant-limit} follows from
\Cref{thm:SH-spectral-selection}.
\end{proof}

The selected space need not correspond to a single Dirichlet eigenvalue.

\begin{proposition}[Degenerate selection across distinct spectral levels]
\label{prop:SH-two-level-degeneracy}
Suppose there exist distinct active eigenvalues
\[
    \lambda_-<\lambda_+
\]
such that
\begin{equation}
\label{eq:SH-midpoint}
    \rhoSH
    =
    \frac{\lambda_-+\lambda_+}{2},
\end{equation}
and suppose that no active Dirichlet eigenvalue lies closer to \(\rhoSH\).
Then
\[
    \lambda_-,
    \lambda_+
    \in
    \Sigma_{\rhoSH,*}(u_0),
\]
and
\begin{equation}
\label{eq:SH-two-level-space}
\begin{aligned}
    \Ker(\Bop-\lambda_-I)
    \oplus
    \Ker(\Bop-\lambda_+I)
    \subset
    \mathcal H_{\rhoSH,*}(u_0).
\end{aligned}
\end{equation}
If these are the only active eigenvalues at minimal distance from
\(\rhoSH\), then
\begin{equation}
\label{eq:SH-two-level-limit}
\begin{aligned}
    u_{\rhoSH,*}
    =
    \frac{
        \left(
            \mathsf E^{\Bop}_{\lambda_-}
            +
            \mathsf E^{\Bop}_{\lambda_+}
        \right)u_0
    }{
        \norm{
            \left(
                \mathsf E^{\Bop}_{\lambda_-}
                +
                \mathsf E^{\Bop}_{\lambda_+}
            \right)u_0
        }_{\Hilb}
    }.
\end{aligned}
\end{equation}
\end{proposition}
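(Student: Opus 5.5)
The argument is a direct unwinding of the definitions \eqref{eq:rho-distance}--\eqref{eq:rho-selected-space} combined with \Cref{thm:SH-spectral-selection}. We first compute the distances of the two levels from \(\rhoSH\). By the midpoint condition \eqref{eq:SH-midpoint},
\[
    |\lambda_\pm-\rhoSH|=\frac{\lambda_+-\lambda_-}{2}=:d>0 .
\]
Since both \(\lambda_-\) and \(\lambda_+\) are active, that is, they lie in \(\Sigma_{\Bop}(u_0)\), the minimum in \eqref{eq:rho-distance} satisfies \(d_{\rhoSH}(u_0)\le d\). The hypothesis that no active Dirichlet eigenvalue lies closer to \(\rhoSH\) gives the reverse inequality. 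Hence \(d_{\rhoSH}(u_0)=d\), and both levels belong to \(\Sigma_{\rhoSH,*}(u_0)\) by \eqref{eq:rho-selected-spectrum}.

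The inclusion \eqref{eq:SH-two-level-space} follows immediately from the definition \eqref{eq:rho-selected-space}. That space is the orthogonal direct sum over all levels in \(\Sigma_{\rhoSH,*}(u_0)\), so it contains the summands for \(\lambda_-\) and \(\lambda_+\). These summands are mutually orthogonal because they are eigenspaces of the self-adjoint operator \(\Bop\) for distinct eigenvalues (\Cref{lem:dirichlet-spectral}).

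For the limit formula, assume additionally that \(\Sigma_{\rhoSH,*}(u_0)=\{\lambda_-,\lambda_+\}\). Then
\[
    \mathcal H_{\rhoSH,*}(u_0)=\Ker(\Bop-\lambda_-I)\oplus\Ker(\Bop-\lambda_+I).
\]
Because the two summands are orthogonal, the orthogonal projection onto their sum is \(\mathsf E^{\Bop}_{\lambda_-}+\mathsf E^{\Bop}_{\lambda_+}\), so \(\mathsf E_{\rhoSH,*}=\mathsf E^{\Bop}_{\lambda_-}+\mathsf E^{\Bop}_{\lambda_+}\). The numerator is nonzero, since activity gives \(\mathsf E^{\Bop}_{\lambda_\pm}u_0\neq0\) and the two components are orthogonal. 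Substituting into \eqref{eq:SH-selection-limit} of \Cref{thm:SH-spectral-selection} yields \eqref{eq:SH-two-level-limit}.

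There is no real obstacle. The only point needing care is the identification of the projection onto the direct sum with the sum of the projections, which rests on the orthogonality of distinct Dirichlet eigenspaces.
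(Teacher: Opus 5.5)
Your proposal is correct and follows essentially the same route as the paper's proof. Both use the midpoint condition to get equal distances, the minimality hypothesis to place both levels in \(\Sigma_{\rhoSH,*}(u_0)\), the identification \(\mathsf E_{\rhoSH,*}=\mathsf E^{\Bop}_{\lambda_-}+\mathsf E^{\Bop}_{\lambda_+}\) via orthogonality of distinct eigenspaces, and substitution into \eqref{eq:SH-selection-limit}; your version simply spells out \(d_{\rhoSH}(u_0)=d\) and the nonvanishing of the numerator a little more explicitly.
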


\begin{proof}
Condition \eqref{eq:SH-midpoint} gives
\[
    \rhoSH-\lambda_-
    =
    \lambda_+-\rhoSH
    =
    \frac{\lambda_+-\lambda_-}{2}.
\]
Therefore
\[
    |\lambda_--\rhoSH|
    =
    |\lambda_+-\rhoSH|.
\]
By assumption, no active eigenvalue has smaller distance from
\(\rhoSH\).  Thus both \(\lambda_-\) and \(\lambda_+\) belong to
\(\Sigma_{\rhoSH,*}(u_0)\), proving
\eqref{eq:SH-two-level-space}.  If no further active eigenvalue attains
this distance, the selected orthogonal projection is
\[
    \mathsf E_{\rhoSH,*}
    =
    \mathsf E^{\Bop}_{\lambda_-}
    +
    \mathsf E^{\Bop}_{\lambda_+}.
\]
Substitution into \eqref{eq:SH-selection-limit} gives
\eqref{eq:SH-two-level-limit}.
\end{proof}

\begin{theorem}[Generic uniqueness of the selected Dirichlet level]
\label{thm:SH-genericity}
Let
\begin{equation}
\label{eq:SH-exceptional-midpoints}
    \mathcal R_{\Bop}
    :=
    \left\{
        \frac{\lambda+\mu}{2}:
        \lambda,\mu\in\spec(\Bop),\ \lambda<\mu
    \right\}.
\end{equation}
Then \(\mathcal R_{\Bop}\) is countable.  If
\(\rhoSH\notin\mathcal R_{\Bop}\), then for every
\(u_0\in\Sphere\) the selected active Dirichlet spectrum
\(\Sigma_{\rhoSH,*}(u_0)\) contains exactly one Dirichlet eigenvalue.
The corresponding eigenspace may still have geometric multiplicity greater
than one.

For a fixed initial datum \(u_0\), if
\(\rho_0\notin\mathcal R_{\Bop}\), then there exists \(\eta>0\) such that
for all \(\rho\) with \(|\rho-\rho_0|<\eta\), the selected active Dirichlet
level is the same as at \(\rho_0\).  Hence the selected level is locally
constant in \(\rho\) away from the Dirichlet midpoint set and can change only
when a midpoint is crossed.
\end{theorem}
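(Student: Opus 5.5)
Countability is immediate. By \Cref{lem:dirichlet-spectral}, \(\spec(\Bop)\) is a countable set of distinct values. The midpoint set \(\mathcal R_{\Bop}\) is the image of the countable set of pairs \(\{(\lambda,\mu)\in\spec(\Bop)^2:\lambda<\mu\}\) under the map \((\lambda,\mu)\mapsto(\lambda+\mu)/2\), so it is countable.

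For uniqueness of the selected level, fix \(\rhoSH\notin\mathcal R_{\Bop}\) and \(u_0\in\Sphere\). The set \(\Sigma_{\rhoSH,*}(u_0)\) is nonempty, because the minimum in \eqref{eq:rho-distance} is attained: \(\Sigma_{\Bop}(u_0)\) is a nonempty discrete set tending to infinity. Suppose two distinct eigenvalues \(\lambda<\mu\) both lie in \(\Sigma_{\rhoSH,*}(u_0)\). Then \(|\lambda-\rhoSH|=|\mu-\rhoSH|\) with \(\lambda\neq\mu\), which forces \(\rhoSH-\lambda=\mu-\rhoSH\), that is \(\rhoSH=(\lambda+\mu)/2\in\mathcal R_{\Bop}\), a contradiction. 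Hence the selected set is a single level \(\lambda_*\). Its eigenspace \(\Ker(\Bop-\lambda_*I)\) may still have dimension greater than one, since no claim is made about multiplicity. The same argument appears inside \Cref{prop:SH-two-level-degeneracy}.

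For local constancy, fix \(u_0\) and \(\rho_0\notin\mathcal R_{\Bop}\), and let \(\lambda_*\) be the unique selected level at \(\rho_0\), with \(d_0:=|\lambda_*-\rho_0|\). The plan is to show that the minimiser over the active levels is strictly separated from the next competitor, and then to apply a triangle-inequality perturbation.

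1. Define \(g:=\inf\{|\lambda-\rho_0|-d_0:\lambda\in\Sigma_{\Bop}(u_0)\setminus\{\lambda_*\}\}\), with \(g=+\infty\) if this set is empty.
2. Show \(g>0\). Only finitely many active levels lie in any bounded window around \(\rho_0\), so the infimum is attained by some level, unless it is \(+\infty\). By uniqueness, that level has distance strictly greater than \(d_0\).
3. Set \(\eta:=g/2\), or \(\eta=\infty\) when \(g=+\infty\). For \(|\rho-\rho_0|<\eta\) and any other active level \(\lambda\),
\[
    |\lambda-\rho|\geq|\lambda-\rho_0|-|\rho-\rho_0|
    \geq d_0+g-|\rho-\rho_0|
    > d_0+|\rho-\rho_0|
    \geq|\lambda_*-\rho|.
\]
   So \(\lambda_*\) is the strict unique minimiser of \(|\lambda-\rho|\) over \(\Sigma_{\Bop}(u_0)\). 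By \eqref{eq:rho-selected-spectrum}, this gives \(\Sigma_{\rho,*}(u_0)=\{\lambda_*\}\).

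This holds even for those \(\rho\) near \(\rho_0\) that happen to lie in \(\mathcal R_{\Bop}\), because only active levels compete. Note that \(\mathcal R_{\Bop}\) need not be closed, so we cannot argue via an open complement.

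The final sentence of the theorem then follows by contraposition. Suppose the selected level changes between two parameter values. Take the supremum of the parameters in the interval between them at which the level still agrees with the level at the starting point. At that point, either the selected set fails to be a singleton or local constancy fails. Both imply that the point lies in \(\mathcal R_{\Bop}\). In the second case this is by the result just proved, since \(\rho_0\notin\mathcal R_{\Bop}\) gives local constancy.

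The only genuinely delicate step is \(g>0\), which needs discreteness of the active spectrum near \(\rho_0\) so that the infimum is attained. I expect this to follow routinely from \(\lambda_j\to\infty\).
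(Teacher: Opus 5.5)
Your proof is correct. Countability and the uniqueness step are the same as in the paper: the paper squares and factorizes \((\lambda-\rhoSH)^2-(\mu-\rhoSH)^2=(\lambda-\mu)(\lambda+\mu-2\rhoSH)\), while you split the absolute value, which amounts to the same thing.

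Your local-constancy argument takes a different route from the paper's.
\begin{itemize}
\item The paper works with the squared-distance differences \(G_\lambda(\rho)=(\lambda-\lambda_*)(\lambda+\lambda_*-2\rho)\). Their \(\rho\)-derivative \(-2(\lambda-\lambda_*)\) is unbounded in \(\lambda\). So the paper first shows \(G_\lambda\to+\infty\) uniformly on \(|\rho-\rho_0|\leq1\) to discard a tail of active levels. It then uses continuity of the finitely many remaining \(G_\lambda\) to obtain a non-explicit \(\eta\).
\item You use the unsquared distances \(\rho\mapsto|\lambda-\rho|\), which are \(1\)-Lipschitz uniformly in \(\lambda\). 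Discreteness is then needed only once, to see that the gap \(g\) is attained and hence positive. The triangle inequality then gives the explicit radius \(\eta=g/2\).
\end{itemize}
Your version is shorter and quantitative. Minimizing \(|\lambda-\rho|\) and minimizing \((\lambda-\rho)^2\) over the active levels are equivalent, so nothing is lost by leaving the polynomial values. You also justify the closing ``changes only across a midpoint'' clause, which the paper treats as immediate.

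One aside is false and should be dropped: \(\mathcal R_{\Bop}\) is in fact closed, and indeed locally finite. Every Dirichlet eigenvalue is at least \(\lambda_1>0\), so a midpoint \((\lambda+\mu)/2\leq M\) forces \(\lambda,\mu\leq 2M\). Only finitely many eigenvalues lie below \(2M\).

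Your argument never uses the false claim. Local finiteness, however, gives a cleaner proof of the last clause than your supremum argument, whose case analysis at the supremum is left implicit. The complement of \(\mathcal R_{\Bop}\) is a disjoint union of open intervals. On each of them the selected level is locally constant, and therefore constant by connectedness.
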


\begin{proof}
The Dirichlet spectrum is countable, so the set of pairwise midpoints in
\eqref{eq:SH-exceptional-midpoints} is countable.  Suppose that two distinct
active eigenvalues \(\lambda<\mu\) satisfy
\[
    |\lambda-\rhoSH|=|\mu-\rhoSH|.
\]
Squaring and subtracting gives
\[
    (\lambda-\rhoSH)^2-(\mu-\rhoSH)^2
    =
    (\lambda-\mu)(\lambda+\mu-2\rhoSH)=0.
\]
Since \(\lambda\neq\mu\), necessarily
\[
    \rhoSH=\frac{\lambda+\mu}{2}\in\mathcal R_{\Bop}.
\]
Thus, if \(\rhoSH\notin\mathcal R_{\Bop}\), two distinct active Dirichlet
levels cannot be equidistant from \(\rhoSH\).  Since the minimum distance in
\eqref{eq:rho-distance} is attained, the selected active Dirichlet level is
unique.

Fix now \(u_0\) and \(\rho_0\notin\mathcal R_{\Bop}\), and let
\(\lambda_*\) be the unique selected active level at \(\rho_0\).  There is a
strict positive gap between its squared distance and the next active squared
distance.  For
\(\lambda\in\Sigma_{\Bop}(u_0)\setminus\{\lambda_*\}\), define
\[
    G_\lambda(\rho)
    :=
    (\lambda-\rho)^2-(\lambda_*-\rho)^2.
\]
At \(\rho=\rho_0\), all these numbers are positive.  The difference
factorizes as
\[
    G_\lambda(\rho)
    =
    (\lambda-\lambda_*)
    (\lambda+\lambda_*-2\rho).
\]
For \(|\rho-\rho_0|\leq1\),
\[
    \lambda+\lambda_*-2\rho
    \geq
    \lambda+\lambda_*-2(|\rho_0|+1),
\]
so \(G_\lambda(\rho)\to+\infty\) uniformly on this neighborhood as
\(\lambda\to\infty\).  Hence all sufficiently large active levels remain
uniformly separated from zero there.  Only finitely many active levels remain
to be considered, and continuity of their functions \(G_\lambda\) yields a
common \(\eta>0\) for which
\[
    G_\lambda(\rho)>0
\]
for every active \(\lambda\neq\lambda_*\) whenever
\(|\rho-\rho_0|<\eta\).  Hence \(\lambda_*\) remains the unique selected
active level throughout that neighborhood.
\end{proof}

There are two sources of degeneracy in this example.  A Dirichlet eigenvalue
may itself have multiplicity greater than one, or two distinct eigenvalues
may lie at the same distance from \(\rho\).  The second case occurs only at
the midpoint set \(\mathcal R_{\Bop}\).  Away from that set, changing
\(\rho\) leaves the selected Dirichlet level unchanged locally; when a
midpoint is crossed, the preferred level may switch.


\section{Concluding remarks}
\label{sec:conclusion}

For the constrained linear equation, the polynomial acts through the numbers
\(\Ppoly(\lambda_j)\).  The selected state is the normalized projection onto
the active spectral subspace where these numbers are smallest, and the next
active value determines the exponential rate.  The same rate holds in every
fractional domain after positive time.  The design result shows that a finite
preferred spectral set can be prescribed, while the perturbation theorem
gives a quantitative condition under which an isolated selection is
unchanged.  In the Swift--Hohenberg case the only cross-level degeneracies are
Dirichlet midpoints.

The argument uses the invariance of the spectral components of the linear
flow.  A pointwise reaction term couples those components, so the present
mode-by-mode argument is no longer available.  A nonlinear problem in the
same polynomial Dirichlet framework is considered in
\cite{BrzezniakHussainNonlinearDynamics2026}.

\section*{Conflict of interest}
The author declares that he has no conflict of interest.

\section*{Data availability}
No datasets were generated or analysed in the present study.

\end{document}